\documentclass{amsart}

%%%%%%%%%%%%%%%%%%%%%%%%%%%%%% packages
\usepackage{amsmath, amscd, amsthm,amssymb, bbold}
\usepackage{
%a4wide, 
overpic, graphicx, tikz-cd}
%temporary
%\usepackage{showkeys}
\usepackage [autostyle, english = american]{csquotes}
\MakeOuterQuote{"}

%%%%%%%%%%%%%%%%%%%%%%%%%%%%%% Definition of theorem environments 
\theoremstyle{plain}
\newtheorem{thm}{Theorem}[section]
\newtheorem{prop}[thm]{Proposition}
\newtheorem{lem}[thm]{Lemma}
\newtheorem{cor}[thm]{Corollary}
  
\theoremstyle{remark}
\newtheorem{rem}[thm]{Remark}
  
\theoremstyle{definition}
\newtheorem{defn}[thm]{Definition}
\newtheorem*{condition*}{Condition}

%%%%%%%%%%%%%%%%%%%%%%%%%%%%%% User specified LaTeX commands.

\newcommand{\real}{\mathbb{R}}
\newcommand{\integer}{\mathbb{Z}}
\newcommand{\complex}{\mathbb{C}}
\newcommand{\cone}{\mathbf{C}}
\newcommand{\mult}{\mathcal{M}}
\newcommand{\cL}{\mathcal{L}}
\newcommand{\cB}{\mathcal{B}}
\newcommand{\bL}{\mathbf{L}}
\newcommand{\bbL}{\mathbb{L}}
\newcommand{\back}{B}
\newcommand{\Fourier}{\mathcal{F}}
\newcommand{\Func}{\mathfrak{F}}
\newcommand{\smoothing}{\mathcal{K}}
\newcommand{\bi}{\mathbf{i}}
\newcommand{\bj}{\mathbf{j}}
\newcommand{\bm}{\mathbf{m}}
\newcommand{\bw}{\mathbf{w}}
\newcommand{\bx}{\mathbf{x}}
\newcommand{\by}{\mathbf{y}}
\newcommand{\bs}{\mathbf{s}}
\newcommand{\supp}{\mathrm{supp}\,}
\newcommand{\Tr}{\mathrm{Tr}\,}
\newcommand{\trace}{\mathrm{trace}}
\newcommand{\tracefree}{\mathrm{trace-free}}
\newcommand{\ess}{\mathrm{ess}}
\renewcommand{\top}{\mathrm{top}}
\newcommand{\excep}{\mathcal{E}}
\newcommand{\hn}{k}

%%%%%%%%%%%%%%%%%%%%%%
% Beginning of the document
%%%%%%%%%%%%%%%%%%%%%%
\begin{document}

%%%%%%%%% Title, Author, Abstract,  ETC %%%%%%%%%%%%%%

\title[The error term of the Prime Orbit Theorem]{The error term of The Prime Orbit Theorem\\
 for expanding semiflows}
\author{Masato Tsujii}
\date{\today}
\address{Department of Mathematics, Kyushu University, Motooka 744, Nishi-ku,
Fukuoka, 819-0395, Japan}
\email{tsujii@math.kyushu-u.ac.jp}

\begin{abstract}
We consider suspension semiflows of angle multiplying maps on the circle and study the distributions of periods of their periodic orbits. Under generic conditions on the roof function, we give an asymptotic formula on the number $\pi(T)$ of prime periodic orbits with period $\le T$. The error term is bounded, at least,  by 
\[
\exp\left(\left(1-\frac{1}{4\lceil \chi_{\max}/h_{\top}\rceil}+\varepsilon\right) h_{\top}\cdot  T\right)\qquad \mbox{in the limit $T\to \infty$}
\]
for arbitrarily small $\varepsilon>0$,  where $h_{\top}$ and $\chi_{\max}$ are respectively the topological entropy and the  maximal Lyapunov exponent of the semiflow.
\end{abstract}

\maketitle

%%%%%%%%%%%%%%%%%%%%%%%%%% Text %%%%%%%%%%%%%%%%%%%%%%
\section{Introduction }

For a flow $f^{t}:M\to M$ on a closed manifold $M$ with some hyperbolicity, it is well known that the number $\pi(T)$ of periodic orbits with period $\le T$ grows exponentially as $T\to\infty$ and the exponential rate coincides with the topological entropy $h_{\top}$ of the flow. The prime orbit theorem, due to Parry and Pollicott \cite[Theorem 9.3]{PP}, gives a more precise estimate in the case of topologically weakly mixing hyperbolic flows: 
\begin{equation}\label{eq:PP}
\pi(T)=(1+o(1))\int_{1}^{T}\frac{e^{h_{\top}t}}{t}dt\quad \mbox{as $T\to\infty$.}
\end{equation}
This paper addresses estimates of the error term in this asymptotic formula. 

For geodesic flows on surfaces with negative (variable) curvature, Pollicott and Sharp proved in \cite{PS}  that the relative error term, denoted by $o(1)$ in the formula (\ref{eq:PP}) above, is actually exponentially small, that is, bounded by $Ce^{-\varepsilon T}$ with some $C>0$ and $\varepsilon>0$. More recently, this result is extended to higher dimensional cases by Giulietti, Liverani and Pollicott\cite{GLP} and  Stoyanov\cite{Stoyanov}. 
But not much is known about the exponential rate at which the relative error term decreases.  

For the geodesic flows on surfaces with negative {\em constant} curvature, we have a much more precise asymptotic formula due to Huber, which reads
\begin{equation}
\pi(T)=\int_{1}^{T}\frac{e^{h_{\top}t}}{t}dt+\sum_{i=1}^{k}\int_{1}^{T}\frac{e^{\mu_{i}t}}{t}dt+\mathcal{O}\left(e^{\rho t}\right)\label{eq:Huber}
\end{equation}
where $\rho=(3/4)h_{\top}$ and $\mu_{i}$, $1\le i\le k$,
are real numbers satisfying $\rho<\mu_{i}<h_{\top}$. 
(The exponents $\mu_{i}$ are related to small eigenvalues of the Laplacian on the surface. See \cite{Buser}.) But this result is known only for the case of constant curvature because the proof is based on the fact that the geodesic flow in such case is identified with the left action of a hyperbolic one-parameter subgroup of $SL(2,\real)$ on its  quotient space by the right action of a discrete subgroup.

Comparing these results, we are tempted to pose a question whether such a precise asymptotic formula as (\ref{eq:Huber}) is available for more general type of  hyperbolic flows and by a more flexible method. 
In this paper, we pursue  this question in the case of suspension semiflows of angle multiplying maps on the circle and provide a positive answer under generic conditions on the roof function.

%%%%%%%%%%
\section{The main results}

%%%%%%%%%%
\subsection{Definitions}

We consider a class of (simplest possible) expanding semiflows. This kind of semiflows have been studied in \cite{Ruelle86, Pollicott99, Tsujii08} as  a simplified model of Anosov flows. 
First we fix a positive integer $\ell\ge2$ and consider the angle-multiplying map
\[
\tau:S^{1}\to S^{1},\quad\tau(x)=\ell x\;\mod\integer.
\]
Let $C_{+}^{\infty}(S^{1})$ be the space of positive-valued $C^{\infty}$
functions on $S^{1}$.
Then we consider the suspension semiflow of $\tau$ with roof function~$f\in C_{+}^{\infty}(S^{1})$:
\[
\mathbf{T}_{f}=\{T_{f}^{t}:X_{f}\to X_{f}\mid t\ge0\}.
\]
(See Figure \ref{fig:expandingsemiflow}.)
This is a semiflow on the set 
\[
X_{f}:=\{(x,y)\in S^{1}\times\real\mid0\le y<f(x)\}\subset S^{1}\times\real
\]
and defined  precisely by the expression
\[
T_{f}^{t}(x,y)=(\tau^{n(x,y+t;f)}(x),\; y+t-f^{(n(x,y+t;f))}(x))
\]
where 
\begin{align}
 & f^{(n)}(x)=\sum_{i=0}^{n-1}f(\tau^{i}(x))\label{eq:fn}
 \intertext{and} 
 & n(x,t;f)=\max\{n\ge0\mid f^{(n)}(x)\le t\}.\label{eq:n}
\end{align}
\begin{figure}[htbp]
\begin{center}
\begin{overpic}[scale=0.35]{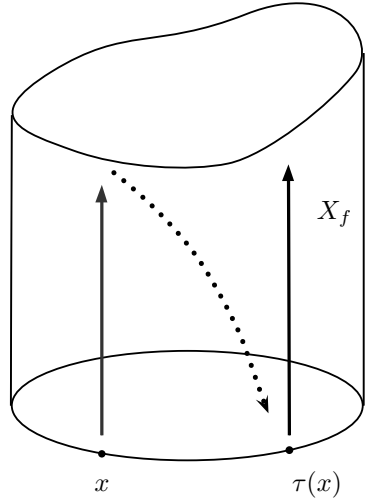}
\put(70,50){$X_f$}
\put(25,-5){$x$}
\put(65,-5){$\tau(x)$}
\end{overpic}
\end{center}
\caption{Expanding semiflow $\mathbf{T}_f$}
\label{fig:expandingsemiflow}
\end{figure}

%%%%%%%%
\subsection{Spectral properties of transfer operators}\label{ss:heu}
By a heuristic argument, 
the distribution of periods of periodic orbits of $\mathbf{T}_f$ is related  to the spectra of the transfer operators 
\[
\cL^{t}\varphi(z)=\sum_{w:T_{f}^{t}(w)=z}\varphi(w).
\]
Indeed, computing
the flat trace of $\cL^{t}$, defined as the integral of the Schwartz kernel $K^{t}(z,w)$ of $\cL^t$ along
the diagonal $z=w$, we find 
\begin{equation}\label{eq:ABT}
\Tr^\flat \cL^t=\sum_{\gamma\in\Gamma}\sum_{n=1}^{\infty}\frac{|\gamma|}{1-E_{\gamma}^{-n}}\cdot\delta(t-n|\gamma|)
\end{equation}
where $\Gamma$ is the set of prime periodic orbits 
and   $|\gamma|$ and $E_{\gamma}$ denote respectively the prime period  
and the (coefficient of) linearized Poincar\'e map.
If we ignore the sum over $n\ge2$ and also the term $E_{\gamma}^{-n}$ in the denominator of the summands 
(which are in fact relatively small), we would have 
\begin{equation}\label{eq:principal}
\frac{1}{t}\cdot\Tr^\flat \cL^{t}\;\sim\;\sum_{\gamma\in\Gamma}\delta(t-|\gamma|),\qquad\mbox{ and so}\qquad\int_{1}^{T}\frac{1}{t}\cdot\Tr^\flat \cL^{t}dt\;\sim\;\pi(T).
\end{equation}
Therefore, if the flat trace $\Tr^\flat \cL^{t}$ were related to the spectrum
of $\cL^{t}$ as in the case of the usual trace,  the asymptotics of $\pi(T)$ would be expressed in terms of the spectrum of $\cL^{t}$. For this reason, we are going to study the spectral properties of the transfer operators $\cL^t$.

Let us say that a function $\varphi:X_{f}\to\complex$ is of class $C^{\infty}$
if $\cL^{t}\varphi$ for $t\ge 0$ are $C^{\infty}$ functions on the interior $X_{f}^{\circ}$ of $X_f$ (as a subset of $S^1\times \real$) and each of their partial derivatives are bounded. This implies that $\varphi$ is not only $C^{\infty}$ on $X_{f}^{\circ}$ but also satisfies some conditions on its values and differentials on the boundaries. Let $C^{\infty}(X_{f})$
be the space of $C^{\infty}$ functions on $X_{f}$ (defined as above), which,  equipped with the uniform $C^{r}$ norms $\|\varphi|_{X_{f}^{\circ}}\|_{C^{r}}$
for $r\ge0$,  is a Fr\'echet space. 
With this definition of $C^{\infty}(X_{f})$, we may regard $\cL^{t}$ for $t\ge 0$ as a continuous
semigroup of operators
\[
\cL^{t}:C^{\infty}(X_{f})\to C^{\infty}(X_{f}).
\]
To study spectral properties of $\cL^{t}$, we will define  Banach spaces 
\[
C^{\infty}(X_{f})\subset\cB^{r,p}(X_{f})\subset L^{2}(X_{f})
\]
for real numbers $r>0$
and integers $p\ge1$ and consider the natural extensions of $\cL^t$ to them.
The next theorem gives a spectral property of $\cL^{t}$ on  $\cB^{r,p}(X_{f})$
under some generic conditions on the roof function~$f$. 
We write $h(f)$, $\chi_{\max}(f)$ and $\chi_{\min}(f)$ respectively for the topological entropy, 
the maximum Lyapunov exponent and the minimum Lyapunov exponent:
\[
\chi_{\max}(f):=\lim_{t\to\infty}\frac{1}{t}\max_{z\in X_{f}}\log\|DT_{f}^{t}(z)\|,\qquad
\chi_{\min}(f):=\lim_{t\to\infty}\frac{1}{t}\min_{z\in X_{f}}\log\|DT_{f}^{t}(z)\|.
\]
We put
\[
\alpha(f):=\frac{\chi_{\max}(f)}{h(f)}.
\]
We always have $\alpha(f)\ge1$ 
from Ruelle inequality\cite{Mane} and may regard $\alpha(f)$ as a measurement of spacial
non-uniformity of expansion by the semiflow $\mathbf{T}_{f}$. 
%%%%%%%%%% Theorem 2.3
\begin{thm}
\label{th:spec} For any $f\in C_{+}^{\infty}(S^{1})$, any $r>0$ and any integer $p\ge 1$,
the transfer operators $\cL^{t}$ for sufficiently large
$t>0$ extend to bounded operators
\begin{equation}\label{eq:cL}
\cL^{t}:\cB^{r,p}(X_{f})\to\cB^{r,p}(X_{f}).
\end{equation}
For each integer $p\ge1$ and for each $\varepsilon>0$, there is an open dense subset\/ $\mathcal{U}_{p}(\varepsilon)\subset C_{+}^{\infty}(S^{1})$ such that, if $f\in\mathcal{U}_{p}(\varepsilon)$ and if $r>0$ is sufficiently large (see (\ref{eq:choice_r})),
the essential spectral radius of the operator (\ref{eq:cL}) for 
sufficiently large $t>0$ is smaller than $\exp((\rho_p(f)+\varepsilon) t)$
where
\begin{equation}\label{eq:bound_ess}
\rho_{p}(f):=\frac{1}{2}\left(1+\frac{\max\left\{ p,\,\alpha(f)\right\} -1}{p}\right)\cdot h(f). 
\end{equation}
\end{thm}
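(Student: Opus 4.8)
\medskip
\noindent\emph{Overview.} The plan is to construct the spaces $\cB^{r,p}(X_f)$ as anisotropic Sobolev (Triebel--Lizorkin type) spaces, defined through a Littlewood--Paley decomposition in the cotangent variable together with weights that are \emph{negative} in the directions transverse to the flow --- the directions into which the expansion of $\tau$ pushes frequencies, hence where the transfer operator regularizes --- and then to prove a Lasota--Yorke--Hennion inequality of the form
\[
\|\cL^{t}\varphi\|_{\cB^{r,p}(X_f)}\le C\,e^{(\rho_p(f)+\varepsilon)t}\,\|\varphi\|_{\cB^{r,p}(X_f)}+C_t\,\|\varphi\|_{\cB^{r',p}(X_f)}\qquad (r'<r).
\]
Once the inclusion $\cB^{r,p}(X_f)\hookrightarrow\cB^{r',p}(X_f)$ is seen to be compact --- which is why $r$ must be taken large, cf.\ (\ref{eq:choice_r}) --- the Nussbaum formula gives the asserted bound on the essential spectral radius, and the boundedness statement (\ref{eq:cL}) drops out of the same estimate. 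The hypothesis that $t$ be large ensures that every orbit has wrapped around the base ($\tau$ has been applied many times along it), which is what makes the expansion of $\tau$ and the smoothing it induces genuinely available.

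\medskip
\noindent\emph{The high-frequency estimate.} After the dyadic decomposition, the essential spectral radius is controlled by $\limsup_{\omega\to\infty}$ of the operator norm of the component of $\cL^{t}$ sending frequencies of size $\sim\omega$ back to frequencies of size $\sim\omega$. On such a component $\cL^{t}$ is, modulo negligible remainders, an oscillatory integral operator: in the flow variable it is multiplication by the phase $e^{2\pi i\eta\, f^{(n)}(x)}$ ($\eta$ the flow frequency, $n=n(x,t;f)$), while in the base variable it is the un-normalized transfer operator of $\tau^{n}$, contracting base cotangent vectors by $\ell^{-n}$. The crude bound on this component is the spectral radius $\asymp e^{h(f)t}$; the improvement comes from an $L^{2}$ almost-orthogonality ($TT^{*}$/cobordism) computation: because the $\ell^{n}$ inverse branches separate their images in phase space, the $L^2$ operator norm is only the square root of the crude count, and this is the source of the factor $\tfrac12$ in $\rho_p(f)$. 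To extract the second factor one invokes that, for $f$ generic, the Birkhoff sums $f^{(n)}$ are non-integrable with respect to the (un)stable foliation up to order $p$; running stationary and non-stationary phase in the base variable, each non-degenerate configuration turns separation into genuine regularity, the number of usable configurations along an orbit of combinatorial length $n$ being $\approx n/p$, while the orbit segments where expansion is most non-uniform are weighted thermodynamically and contribute only through $\alpha(f)$. Tracking both bookkeeping devices produces exactly the factor $\frac{\max\{p,\alpha(f)\}-1}{p}\cdot h(f)$, and one sees along the way that $r$ must exceed a bound depending on $p,\varepsilon$ and $f$ in order to integrate by parts often enough against the frequency growth.

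\medskip
\noindent\emph{Genericity.} The open dense set $\mathcal{U}_p(\varepsilon)$ will be defined by finitely many non-vanishing conditions: along periodic orbits of $\tau$ of period at most some $N=N(p,\varepsilon)$, a certain $(p+1)$-jet assembled from $f$ and its $\tau$-pushforwards should be in general position --- concretely, suitable Jacobians and Hessians attached to pairs of distinct inverse branches of $\tau^{n}$ are required not to vanish. Openness is clear; density follows from a transversality argument, perturbing $f$ within $C_{+}^{\infty}(S^{1})$ to remove any accidental degeneracy. One then checks that membership in $\mathcal{U}_{p}(\varepsilon)$ is precisely what makes the stationary-phase step of the previous paragraph run with the stated exponent.

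\medskip
\noindent\emph{Main obstacle.} The delicate part is the second half of the high-frequency estimate --- converting order-$p$ non-integrability into the sharp exponent $\rho_p(f)$ uniformly over all dyadic scales $\omega$. This demands a multi-scale iterative scheme that simultaneously keeps account of the $\ell^{-n}$ contraction of base frequencies, of the spreading of the flow frequency caused by the oscillation of $f^{(n)}$, and of the anisotropic weights built into $\cB^{r,p}(X_f)$; and it demands a thermodynamic (pressure / sub-multiplicativity) argument replacing the pointwise worst expansion rate $\chi_{\max}(f)$ by its entropy-weighted version, which is exactly where the ratio $\alpha(f)=\chi_{\max}(f)/h(f)$, rather than a coarser quantity, enters and dictates the optimal choice $p\approx\lceil\alpha(f)\rceil$. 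Handling the terms discarded in the dyadic decomposition --- boundary contributions from $\partial X_f$, overlaps of the partition of unity, and the passage between the microlocal estimate and the norm of $\cL^{t}$ itself --- uniformly in $t$ is a further but essentially routine burden within the framework of \cite{Tsujii08}.
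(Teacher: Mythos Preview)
Your outline misses the mechanism that actually produces the exponent $\rho_p(f)$, and the substitute you propose---a $TT^*$ step followed by ``order-$p$ stationary phase''---is not shown to reach it.

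In the paper the integer $p$ enters through the \emph{norm}: $\cB^{r,p}$ is built from $L^{2p}$-norms of Littlewood--Paley pieces (see (\ref{eq:norm_Brp})), and the core estimate (Proposition~\ref{lm:main}) rests on the identity $\bigl\|\sum_i u_i\bigr\|_{L^{2p}}^{2p}=\bigl\|\sum_{\bi}\prod_{k=1}^p u_{\bi(k)}\bigr\|_{L^2}^2$, the sum running over $p$-tuples $\bi$ of inverse branches. The Fourier support of the product $\prod_k u_{\bi(k)}$ concentrates near a cone with slope $S(\bi)=\sum_k S(\bi(k))$, so the relevant transversality is that of these \emph{sum-cones}, quantified by $\Delta$ in (\ref{eq:Delta}). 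Proposition~\ref{lm:main} gives the bound $(M^{p-1}\Delta)^{1/2p}$; combining it with the local factor $E^{1/2p}\le e^{b_\nu t/2p}$ from Proposition~\ref{lm:local1}, and with the generic bound $\Delta\lesssim e^{\max\{p\,h(f)-a_\nu,0\}t}$ from Theorem~\ref{thm:multiplicity}, one gets exactly $\mu_\nu$ in (\ref{eq:boundEss}), and letting the intervals $J_\nu$ shrink yields (\ref{eq:bound_ess}). Your $TT^*$ account is the case $p=1$ of this; for $p\ge 2$ the improvement comes from the $L^{2p}$ lift to $p$-tuples, not from higher-order non-degeneracy of a phase, and you have not explained how a stationary-phase count of ``usable configurations $\approx n/p$'' would reproduce the specific arithmetic $(p-1)h(f)+\max\{p\,h(f),\chi_{\max}\}$ behind $\rho_p(f)$. (A side point: the weight $2^{rm}$ is \emph{positive} in the frequency direction transverse to the flow cone, not negative---this is an expanding semiflow, and $\cL^t$ contracts transverse frequencies, so one demands regularity there.)

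The genericity argument is also of a different nature. The good set is not cut out by finitely many jet conditions along short periodic orbits; the condition is the asymptotic bound (\ref{eq:mult2}) on $\sum_{\bw} W^r(\bw,t;f)^{-1}$ as $t\to\infty$, involving all of $(T_f^t)^{-1}(z)$. The paper first proves this condition is \emph{prevalent} (Theorem~\ref{thm:multiplicity}) by embedding $f$ in a finite-parameter family (Lemma~\ref{lm:family}), using Fubini and Borel--Cantelli over the parameter; open density of $\mathcal{U}_p(\varepsilon)$ then follows because Theorem~\ref{thm:spectrum} shows the spectral conclusion persists in a $C^\infty$-neighborhood of any $f_0\in\mathcal{G}$. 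A finite-jet transversality theorem would indeed give an open dense set, but you give no reason why such a finite condition on $f$ would control $\Delta$ uniformly for all large $t$.
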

%%%%%%%
\begin{rem}
\label{rem:mui} The conclusion of the theorem above implies that the spectral set of (\ref{eq:cL})  on the region $|z|\ge\exp((\rho_p(f)+\varepsilon) t)$
consists of finitely many eigenvalues with finite multiplicities. Such
eigenvalues (counted with multiplicity) are written in the form $\exp(\mu_{i}t)$,
$i=1,2,\cdots,k$, with complex numbers $\mu_{i}$  that do not depend
on $t$. (See \cite[pp295]{Tsujii08}.)
\end{rem}
 
The case  $p=1$ in the theorem above corresponds to the result in our previous paper \cite{Tsujii08}, where the bound is
\[
\rho_{1}(f)=\exp(\chi_{\max}(f)\cdot t/2) 
\]
as $\alpha(f)\ge 1$. (See also \cite{Tsujii10,Tsujii12} for the corresponding results for contact Anosov
flows.) This bound is preferable
when $\alpha(f)$ is close to $1$, but the claim becomes vacuous when $\alpha(f)\ge2$ for $\rho_1(f)$ exceeds the topological entropy $h_{\top}(f)$. The improvement achieved in Theorem \ref{th:spec} is that we get better bounds by choosing different integers $p\ge1$ depending on $\alpha(f)\ge1$. For simplicity's sake, suppose that  $f$ belongs to the residual subset $\mathcal{U}:=\cap_{p\in \mathbb{N}}\cap_{m=1}^\infty\,\mathcal{U}_{p}(1/m)\subset C_{+}^{\infty}(S^{1})$
and put 
\[
\rho(f):=\min_{p\ge1}\rho_{p}(f).
\]
Letting 
\footnote{This choice of $p$ is not always optimal.%
} $p(f)=\lceil\alpha(f)\rceil\ge1$, we have
\begin{equation}\label{eq:rhobound}
\rho(f)\le\rho_{p(f)}(f)  \le \left(1-\frac{1}{2p(f)}\right)h(f)<h(f).
\end{equation}
Therefore, by choosing suitable $p\ge 1$, we always get a bound for the
essential spectral radius of $\cL^t$ that is strictly smaller than the spectral
radius $\exp(h(f)t)$.

\subsection{Asymptotics of the number of periodic orbits}

We next give a consequence of Theorem~\ref{th:spec} on the remainder
term of the prime orbit theorem. Let $\Gamma=\Gamma(f)$ be the set
of prime periodic orbits for the semiflow $\mathbf{T}_{f}$. For
a prime periodic orbit $\gamma\in\Gamma$, we denote its period
by $|\gamma|$. Let $
\pi(T)=\#\{\gamma\in\Gamma\mid|\gamma|\le T\}$.

\begin{thm}\label{th:generalized_huber}
Let $\varepsilon>0$ and suppose that the roof function $f\in C_{+}^{\infty}(S^1)$ belongs
to the open dense subset $\mathcal{U}_{p}(\varepsilon)\subset C_{+}^{\infty}(X_{f})$
given in Theorem \ref{th:spec} for $p\ge 1$. Then, with setting 
\begin{equation}\label{eq:bar_rho}
\bar{\rho}=\bar{\rho}_p(f):=\frac{\rho_p(f)+h(f)}{2},
\end{equation}
we have an asymptotic formula 
\begin{align*}
\pi(T)=\int_{1}^{T}\frac{e^{h_{\top}t}}{t}dt+\sum_{i=1}^{k'}\int_{1}^{T}\frac{e^{\mu_{i}t}}{t}dt+\mathcal{O}\left(e^{(\bar{\rho}+\varepsilon) t}\right),
\end{align*}
where $\mu_{i}$, $1\le i\le k'$, are those complex numbers in Remark \ref{rem:mui}
satisfying the condition $\bar{\rho}+\varepsilon<\Re(\mu_{i})<h(f)$. 
\end{thm}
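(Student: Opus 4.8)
The plan is to run a quantitative Tauberian argument through the dynamical Fredholm determinant $d(s)=\det(1-\mult_s)$ of the weighted transfer operator $\mult_s\varphi(x)=\sum_{\tau y=x}e^{-sf(y)}\varphi(y)$ of the base map, which is the rigorous substitute for the heuristic~(\ref{eq:principal}). Computing $\Tr^\flat\cL^t$ as in~(\ref{eq:ABT}) and using $E_\gamma=\ell^{m_\gamma}$, where $m_\gamma$ is the $\tau$-period of $\gamma$ and $|\gamma|\ge m_\gamma\min f$, one obtains the identity
\[
\frac{d'(s)}{d(s)}=\sum_{\gamma\in\Gamma}\sum_{n\ge1}\frac{|\gamma|\,e^{-sn|\gamma|}}{1-E_\gamma^{-n}}=\int_0^\infty e^{-sT}\,d\psi(T),\qquad \psi(T):=\sum_{n|\gamma|\le T}\frac{|\gamma|}{1-E_\gamma^{-n}},
\]
convergent for $\Re s>h(f)$, together with the factorisation $d(s)=\zeta_f(s)^{-1}\prod_{j\ge1}D_j(s)$, where $\zeta_f(s)=\prod_\gamma(1-e^{-s|\gamma|})^{-1}$ and $D_j(s)=\prod_\gamma(1-E_\gamma^{-j}e^{-s|\gamma|})$. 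Since $E_\gamma^{-1}\le e^{-(\log\ell/\max f)|\gamma|}$, each factor $D_j$ with $j\ge1$ (and their product) is holomorphic and non-vanishing on $\{\Re s>0\}$, so on that half-plane the singularities of $d'/d$ are exactly the poles of $\zeta_f$, i.e. the exponents $\mu_i$ of Remark~\ref{rem:mui}.

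First I would translate Theorem~\ref{th:spec} into analytic information on $d(s)$: the bound on the essential spectral radius of $\cL^t$ on $\cB^{r,p}(X_f)$, with Remark~\ref{rem:mui}, gives that $d$ extends holomorphically to $\{\Re s>\rho_p(f)+\varepsilon\}$, that its only zeros there are the finitely many $\mu_i$ with $\Re\mu_i>\rho_p(f)+\varepsilon$ (with multiplicity, and a simple zero at $s=h(f)$), and — this is the essential point — that $d'(s)/d(s)$ grows at most sub-linearly in $|\Im s|$ along the vertical line $\Re s=\rho_p(f)+\varepsilon$, away from the $\mu_i$. I expect this vertical-line bound to be the main obstacle: it is the analogue of the growth estimates for $\zeta$ that govern the error term in the prime number theorem, it must be squeezed out of the oscillatory-integral/kernel estimates underlying Theorem~\ref{th:spec}, and the exponent one can afford here is precisely what pushes the final error exponent up from $\rho_p(f)$ to the midpoint $\bar\rho$.

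Granting this, I would apply a Mellin–Perron inversion to the primitive $\Psi(T)=\int_0^T\psi(t)\,dt$, whose Laplace transform $d'(s)/(s^2d(s))$ is then absolutely integrable on the critical line. Shifting the contour of $\Psi(T)=\frac{1}{2\pi i}\int_{(c)}\frac{d'(s)}{d(s)}\frac{e^{sT}}{s^2}\,ds$ from $\Re s=c>h(f)$ down to $\Re s=\rho_p(f)+\varepsilon$ picks up the residues at $s=h(f)$ and at the $\mu_i$ with $\Re\mu_i>\rho_p(f)+\varepsilon$, leaving a remainder $O(e^{(\rho_p(f)+\varepsilon)T})$. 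One then recovers $\psi$ from $\Psi$ by monotonicity, $\delta^{-1}(\Psi(T)-\Psi(T-\delta))\le\psi(T)\le\delta^{-1}(\Psi(T+\delta)-\Psi(T))$, at the cost of a smoothing error $O(\delta\,e^{h(f)T})$; balancing $\delta\,e^{h(f)T}$ against $\delta^{-1}e^{(\rho_p(f)+\varepsilon)T}$ with $\delta=e^{((\rho_p(f)-h(f))/2)T}$ yields error $O(e^{(\bar\rho+\varepsilon)T})$ — the exponent $\bar\rho$ is the arithmetic mean of $\rho_p(f)$ and $h(f)$ precisely because exactly one net integration/de-integration step is performed. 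This produces $\psi(T)=e^{h(f)T}/h(f)+\sum_i m_i e^{\mu_iT}/\mu_i+O(e^{(\bar\rho+\varepsilon)T})$, summed over $\mu_i$ with $\Re\mu_i>\bar\rho+\varepsilon$ and $m_i$ the multiplicity.

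Finally I would convert $\psi$ to $\pi$. The non-prime ($n\ge2$) part of $\psi$ contributes only $O(\psi(T/2))=O(e^{h(f)T/2})$, and the geometric correction $\psi(T)-\sum_{n|\gamma|\le T}|\gamma|$ has Laplace transform $\sum_{j\ge1}D_j'/D_j$, holomorphic on $\{\Re s>0\}$, hence contributes $O(e^{\varepsilon T})$; since $\bar\rho\ge\tfrac34 h(f)$ both are absorbed into the error, reducing $\psi(T)$ to $\theta(T):=\sum_{|\gamma|\le T}|\gamma|$. A partial summation $\pi(T)=\int_{t_0}^{T}t^{-1}\,d\theta(t)=\theta(T)/T+\int_{t_0}^{T}\theta(t)t^{-2}\,dt$ (with $t_0=\min_\gamma|\gamma|>0$) then turns each term $e^{\lambda T}/\lambda$ of $\theta$ into $\int_1^T e^{\lambda t}/t\,dt$, the boundary terms $e^{\lambda T}/(\lambda T)$ cancelling, which is the asserted formula, with the $\mu_i$ being those of Remark~\ref{rem:mui} satisfying $\bar\rho+\varepsilon<\Re\mu_i<h(f)$ (modulo polynomial-in-$T$ prefactors should some $e^{\mu_it}$ fail to be semisimple, which genericity on the set of Theorem~\ref{th:spec} should rule out).
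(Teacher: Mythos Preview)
Your approach via the dynamical zeta function and Mellin--Perron inversion is genuinely different from the paper's, which works directly with the flat trace of a lifted transfer operator $\mathbb{L}^t$ on $\bigoplus_{a\in A}\mathbb{B}^{r,p}$ and never invokes $d(s)$ or a Tauberian theorem. The paper decomposes $\mathbb{L}^t=\mathbb{L}^t_{\trace}+\mathbb{L}^t_{\tracefree}$ into a trace-class part and a ``triangular'' (hence flat-trace-free) part, proves that the triangular part contracts at rate $e^{\rho t}$ (Lemma~\ref{lm:T1}) and that the trace-class part has controlled trace norm when mollified in time (Lemma~\ref{lm:T2}), and then bounds $\Tr^\flat((1-\mathbb{\Pi})\,\bbL^{\varphi}\,\bbL^T)$ in Proposition~\ref{lm:ftrace} by testing against a dyadic family of bump functions $\psi_i^T$ of widths $2^{-i}$ near $T$. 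The balancing that produces $\bar\rho=(\rho+h)/2$ appears there exactly as in your argument: the cost $\alpha^{-1}e^{(\rho+\varepsilon) T}$ from Proposition~\ref{lm:ftrace} (narrow test function) is traded against $\alpha\,e^{(h+\varepsilon)T}$ from the periodic orbits missed by the mollification, and the conversion $\tilde\pi\to\pi$ is done via the topological pressure identity $P_{\top}(T_f^t,-\log\det DT_f^t)=0$, just as you propose.

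Your plan is sound in outline, but there is a genuine gap precisely where you flag it: the sub-linear growth of $d'(s)/d(s)$ along $\Re s=\rho_p(f)+\varepsilon$ is \emph{not} a consequence of Theorem~\ref{th:spec} as stated. An essential-spectral-radius bound for $\cL^t$ gives the analytic continuation of $d(s)$ and locates its zeros, but it says nothing about the growth on vertical lines; extracting such a bound requires uniform-in-$\Im s$ trace-norm control on smoothed transfer operators --- essentially the content of Lemmas~\ref{lm:T1}--\ref{lm:T2} and Proposition~\ref{lm:ftrace}, repackaged. Without this input your contour shift is unjustified, and if you only obtained polynomial growth of degree $>1$ you would need additional integrations of $\psi$, degrading the error exponent below $\bar\rho$. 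The paper's flat-trace route can be read as supplying exactly the ``one integration's worth'' of control your scheme needs, without passing through $d(s)$ at all.
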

Note that, 
if we let $p=p(f)=\lceil\alpha(f)\rceil\ge1$, 
we have, from (\ref{eq:rhobound}), that
\[
\bar{\rho}_{p}(f)\le 
\left(1-\frac{1}{4\lceil \chi_{\max}(f)/h(f)\rceil}\right)h(f)<h(f).
\] 
\begin{rem}
The reason that we have the average $\bar{\rho}_p(f)$ instead of $\rho_p(f)$ in the statement of Theorem \ref{th:generalized_huber} above will appear in its proof given in Section \ref{sec:pf_th_huber}. (See Remark \ref{rem_bar_rho}.) 
\end{rem}

In the following sections, we proceed as follows. In Section \ref{sec:generic}, we formulate a transversality condition on the roof function $f$ and decompose Theorem \ref{th:spec} into two theorems:  Theorem \ref{thm:multiplicity} that proves prevalence of the transversality condition and Theorem \ref{thm:spectrum} that proves the conclusion of Theorem \ref{th:spec} from the transversality condition. We prove Theorem \ref{thm:spectrum} in Section \ref{sec:pf1} after preparation in Section \ref{sec:defcBonR}. 
We insert the proof of Theorem \ref{th:generalized_huber} in Section \ref{sec:pf_th_huber} before we prove Theorem~\ref{thm:multiplicity} in Section \ref{sec:multi}, as it uses the argument in the proof of Theorem \ref{thm:spectrum}.

\section{The generic condition}\label{sec:generic}

We set up notation on the dynamics of the semiflow $\mathbf{T}_{f}$
and formulate the transversality condition that defines the open dense
subset $\mathcal{U}_{p}(\varepsilon)$ in Theorem \ref{th:spec}.

\subsection{Differential of the semiflow $\mathbf{T}_{f}$}\label{ss:dif}

The differential $DT_{f}^{t}(z):\real^{2}\to\real^{2}$ at $z\in X_{f}$
is well-defined if $z$ and $T_{f}^{t}(z)$ are not on the (lower) boundary
of $X_{f}$. In general, we define 
\[
DT_{f}^{t}(z)=\lim_{\varepsilon\to+0}DT_{f}^{t}(x,y+\varepsilon):\real^{2}\to\real^{2},
\]
where $DT_{f}^{t}(x,y+\varepsilon)$ for sufficiently small $\varepsilon>0$
is constant and hence the limit on the right hand side is well-defined.
For $t\ge0$, we set 
\begin{equation}\label{eq:EF}
E(z,t;f)=\ell^{n(x,y+t;f)}\quad\mbox{and}\quad F(z,t;f)=\frac{\partial}{\partial x}f^{(n(x,y+t;f))}(x)
\end{equation}
where $n(x,t;f)$
and $f^{(n)}(x)$ are those defined in (\ref{eq:fn}) and (\ref{eq:n}). Then we have
\begin{equation}
DT_{f}^{t}(z)=\begin{pmatrix}E(z,t;f) & 0\\
F(z,t;f) & 1
\end{pmatrix}\label{eq:DT}
\end{equation}
We write $D^{\dag}T_{f}^{t}(z)$ for the transpose
of the inverse of $Df^{t}(z)$, that is,  
\begin{equation}
D^{\dag}T_{f}^{t}(z):={}^{T}(Df^{t}(z))^{-1}=\begin{pmatrix}E(z,t;f)^{-1} & S(z,t;f)\\
0 & 1
\end{pmatrix}\label{eq:DdagT}
\end{equation}
where 
\begin{equation}
S(z,t;f)=-E(z,t;f)^{-1}F(z,t;f).\label{eq:S}
\end{equation}
Then the minimum and maximum  Lyapunov exponents of $\mathbf{T}_{f}$ are written 
\begin{align*}
\chi_{\min}(f)&=\lim_{t\to\infty}\frac{1}{t}\log\left(\min_{z\in X_{f}}E(z,t;f)\right)
\intertext{and}
\chi_{\max}(f)&=\lim_{t\to\infty}\frac{1}{t}\log\left(\max_{z\in X_{f}}E(z,t;f)\right).
\end{align*}
For the topological entropy $h(f)$, we have 
\[
\frac{1}{t}\log\left(\min_{z\in X_{f}}E(z,t;f)\right)\le h(f)\le\frac{1}{t}\log\left(\max_{z\in X_{f}}E(z,t;f)\right)
\]
for any $t>0$ and hence 
\[
\chi_{\min}(f)\le h(f)\le\chi_{\max}(f).
\]

For $0<y_{\min}<y_{\max}$ and $\kappa_{0}>0$,
let $
\Func(y_{\min},y_{\max},\kappa_{0})\subset C_{+}^{\infty}(S^{1})$
be the open subset that consists of  $f\in C_{+}^{\infty}(S^{1})$ satisfying 
\[
y_{\min}<f(x)<y_{\max},\quad |f'(x)|< \kappa_0\quad ,|f''(x)|<\kappa_{0}\quad\mbox{for all \ensuremath{x\in S^{1}}.}
\]
If $f\in\Func(y_{\min},y_{\max},\kappa_{0})$, we have 
\begin{equation}
\bar{\chi}_{\min}:=\frac{\log\ell}{y_{\max}}\le\chi_{\min}(f)\le h(f)\le \chi_{\max}(f)\le \bar{\chi}_{\max}:=\frac{\log\ell}{y_{\min}}.\label{eq:bound_Lyapunov}
\end{equation}
%\begin{lem}\label{lm:nonlinear}
%If $f\in\Fourier(y_{\min},y_{\max},\kappa_{0})$, the function $S(z,t;f)$ in (\ref{eq:S}) satisfies that  
%\[
%|S(z,t;f)|\le C_0 \quad \mbox{and}\quad \left|\frac{\partial }{\partial x} S(z,t;f)\right|\le C_1 \quad \mbox{for all $z=(x,y)\in X_f$ and $t\ge 0$}
%\]
%for $1\le k\le 2$, where $C_k$ is a constant that depends only on $y_{\min}$, $y_{\max}$ and $\kappa_{0}$. 
%\end{lem} 
In what follows, we fix  $0<y_{\min}<y_{\max}$ and $\kappa_{0}>0$ and confine our attention to the semiflows $\mathbf{T}_{f}$
with $f\in\Func(y_{\min},y_{\max},\kappa_{0})$. Since the subset
$\Func(y_{\min},y_{\max},\kappa_{0})$ exhausts $C_{+}^{\infty}(S^{1})$
in the limit $y_{\min}\to+0$, $y_{\max}\to+\infty$ and $\kappa_{0}\to+\infty$,
this causes no loss of generality.
We henceforth fix $r>0$ such that  
\begin{equation}\label{eq:choice_r}
r> \bar{\chi}_{\max}/\bar{\chi}_{\min}\ge 1.
\end{equation}

\subsection{Cones in the flow direction}
Since the time-$t$-map $T^t_f$  is partially hyperbolic, its (push-forward) action  on the cotangent
bundle
\[
D^{\dag}T_{f}^{t}:X_{f}\times\real^{2}\to X_{f}\times\real^{2},\quad D^{\dag}T_{f}^{t}(z,\xi)=(T_{f}^{t}(z),D^{\dag}T_{f}^{t}(z)\xi)
\]
admits a forward invariant cone field. We can set up such a cone field concretely as follows.
For real numbers $s$ and $\theta>0$, we define  
\[
\cone(s,\theta):=\{(\xi,\eta)\in\real^{2}\mid|\xi-s\eta|\le\theta|\eta|\}\subset\real^{2}.
\]
We fix a real number  $\gamma_{0}$ satisfying $1/\ell<\gamma_{0}<1$
and set
\[
\cone_{0}:=\cone(0,\theta_{0}):=\{(\xi,\eta)\in\real^{2}\mid|\xi|\le\theta_{0}|\eta|\}\subset\real^{2}
\]
where 
\[
\theta_{0}:=\frac{\kappa_{0}}{\gamma_{0}\ell-1}.
\]
Then we have that
\begin{equation}
(DT_{f}^{t})_{z}^{\dag}(\cone_{0})=\cone(S(z,t;f),E(z,t;f)^{-1}\theta_{0})\subset\cone(0,\gamma_{0}\theta_{0})\subset\cone_{0}\label{eq:DdagTcone}
\end{equation}
for all $z=(x,y)\in X_{f}$ and $t\ge f(x)-y$.

\subsection{Backward orbits}\label{ss:bo}

For each $z\in X_{f}$, the number of points in its backward orbit
\[
(T_{f}^{t})^{-1}(z)=\{w\in X_{f}\mid T_{f}^{t}(w)=z\}
\]
for time $t>0$ grows exponentially  as $t\to0$. Indeed, for any
$\varepsilon>0$, there exists $C_{\varepsilon}>1$ such that 
\begin{equation}\label{eq:card_backward_orbit}
C^{-1}_{\varepsilon}e^{(h(f)-\varepsilon)t}<\#(T_{f}^{t})^{-1}(z)<C_{\varepsilon}e^{(h(f)+\varepsilon)t}\quad\forall z\in X_{f}, \quad \forall t\ge0.
\end{equation}
For $z=(x,y)\in X_{f}$, $t\ge0$ and $w\in(T_{f}^{t})^{-1}(z)$,
let 
\begin{equation}\label{eq:sn}
0< s_{\hn(z,w;t)}(z,w;t)<\cdots<s_{2}(z,w;t)<s_{1}(z,w;t)\le t
\end{equation}
be the sequence of time $t$ at which the orbit $T_{f}^{s}(w)$, $0< s\le t$, crosses the lower boundary $S^{1}\times\{0\}$ of~$X_{f}$. By definition,
we have 
\[
 T_{f}^{s_{k}(z,w;t)}(w)\in\tau^{-k}(x)\times\{0\}\quad\mbox{for $1\le k\le \hn(z,w;t)$.}
\]
Since we are assuming that $f\in\Func(y_{\min},y_{\max},\kappa_{0})$, we have 
\[
\lfloor t/y_{\max}\rfloor\le n(z,w;t)\le\lceil t/y_{\min}\rceil.
\]

Below we investigate transversality between the cones
\begin{equation}
(D^{\dag}T_{f}^{t})_{w}(\cone_{0})=\cone(S(w,t;f),E(w,t;f)^{-1}\theta_{0})\quad\mbox{for } w\in(T_{f}^{t})^{-1}(z)\label{eq:imageOfCones}
\end{equation}
in some generalized sense. Since much variety of angles of the cones $(D^{\dag}T_{f}^{t})_{w}(\cone_{0})$ for $w\in (T_{f}^{t})^{-1}(z)$ causes  technical difficulties in the following argument, 
 we classify the points $w\in (T_{f}^{t})^{-1}(z)$ with respect
to the value of $E(w,t;f)$ (whose reciprocal is proportional to the angle of  $(D^{\dag}T_{f}^{t})_{w}(\cone_{0})$). For an interval
$J=[a,b]$ with $0<a<b$, we set 
\[
\back(z,t;J;f)=\{w\in(T_{f}^{t})^{-1}(z)\mid e^{at}\le E(w,t;f)\le e^{bt}\}.
\]
We fix a  $C^{\infty}$ function $\chi:\real\to[0,1]$  such
that 
\begin{equation}\label{eq:chi}
\chi(t)=\begin{cases}
0, & \mbox{ if \ensuremath{t\ge2};}\\
1, & \mbox{ if \ensuremath{t\le1}.}
\end{cases}
\end{equation}
For $s\in \real$, let $
\langle s\rangle=\chi(s)+(1-\chi(s))|s|$,
so that $\langle s\rangle \in  [1,\max\{1,|s|\}]$ and that
\[
\langle s\rangle=
\begin{cases}
1, &\mbox{ if $|s|\le 1$;}\\
|s|, &\mbox{ if $|s|\ge 2$}. 
\end{cases}
\]
\begin{defn}
For $z\in X_f$, $t>0$ and  a $p$-tuple $\bw=(\bw(1),\cdots,\bw(p))$
of points in $(T_{f}^{t})^{-1}(z)$, we set 
\[
S(\bw,t;f)=\sum_{i=1}^{p}S(\bw(i),t;f)
\]
and define $E(\bw,t;f)$ by the relation
\[
\frac{1}{E(\bw,t;f)}=\sum_{i=1}^{p}\frac{1}{E(\bw(i),t;f)} .
\]
We define the function $W^{r}(\bw,t;f):\real^{2}\to\real^{2}$
by 
\[
W^{r}(\bw,t;f)(\xi,\eta)=\left\langle \frac{E(\bw,t;f)\cdot |\xi-S(\bw,t;f)\eta|}{\theta_{0}\cdot \langle\eta\rangle}\right\rangle ^{r}.
\]
This function takes constant value
$1$ on the cone 
\begin{equation}\label{eq:coneSE}
\cone(S(\bw,t;f),E(\bw,t;f)^{-1}\theta_0)
\end{equation}
and grows rapidly as the point gets far from it. 
\end{defn}
As a quantification of transversality between
p-tuples of cones in (\ref{eq:imageOfCones}),
we consider the quantity 
\begin{equation}\label{eq:mult}
\sup_{\xi\in \real} \left(
\sum_{\bw=(w(1),\cdots,w(p))\in\back(z,t;J;f)^{p}}\frac{1}{W^{r}(\bw,t;f)(\xi,2)}\right).
\end{equation}
In the case $p=1$, boundedness of this quantity by some relatively small constant implies that most of the cones in (\ref{eq:imageOfCones}) are transversal to each other. 
In the case $p>1$, it does not have such geometric meaning, but still useful in the argument below.   

\begin{rem}
By definition, $W^{r}(\bw,t;f)(\xi,\eta)$ is constant on the intersection of a straight line through the origin with the region $|\eta|\ge 2$.
Hence the constant $2$ in $W^{r}(\bw,t;f)(\xi,2)$ above could be any constant $\eta_0$ such that $|\eta_0|\ge 2$.
\end{rem}
The next theorem gives a bound on (a slight modification of) the quantity (\ref{eq:mult})
under generic conditions on the roof function $f$. Before stating the theorem, let us make a guess on the bound. Note that each function
$\xi\mapsto W^{r}(\bw,t;f)(\xi,1)^{-1}$ decays rapidly on the outside of a neighborhood of $\xi=S(\bw,t;f)$ with width proportional to $E(\bw,t;f)^{-1}\le e^{-at}$.
Hence, if the values of $S(\bw,t;f)$ for $\bw\in\back(z,t;J;f)^{p}$
were distributed randomly and independently on the interval $[-p\theta_{0},p\theta_{0}]$
(as random variables on the space of roof functions $f$), the large deviation argument would tell that, for almost all roof functions $f$,  the quantity (\ref{eq:mult}) should be bounded by 
\[
e^{\varepsilon t} \max\{1,\exp(-at)\cdot(\sharp(T_{f}^{t})^{-1}(z))^{p}\}
\le \exp\left(\left(\max\{p\cdot h(f)-a,0\}+\varepsilon\right)t\right)
\]
in the limit $t\to \infty$, for arbitrarily small $\varepsilon>0$. The next theorem tells that this guess is basically true, but with slight modifications. 

For an integer $n\ge 1$, let $\mathrm{Per}(\tau,n)$ be the set of periodic points of $\tau$ with period not greater than $n$ and, for $\delta>0$, let $\mathrm{Per}_\delta(\tau,n)$ be the open $\delta$-neighborhood of $\mathrm{Per}(\tau,n)$. 
\begin{thm}
\label{thm:multiplicity} 
Let $p\ge 1$ be an integer. 
For an interval $J=[a,b]$ with $0<a<b$ and real numbers $\varepsilon,\delta>0$, there exists $n_0=n_0(\varepsilon)$ and  a prevalent\footnote{See remark below for the definition of this term "prevalent".} subset 
\[
\mathcal{G}(J,n,\varepsilon,\delta;p)\subset\Func(y_{\min},y_{\max},\kappa_{0})
\]
for $n\ge n_0$, 
such that the following claim holds for $f\in\mathcal{G}(J,n,\varepsilon,\delta;p)$:\newline 
\indent For sufficiently large $t>0$ and for any  $z=(x,y)\in X_{f}$ with $x\notin \mathrm{Per}_\delta(n,\tau)$, there exists  a subset $\excep=\excep(z,t;f)\subset\tau^{-n}(x)$ with $\#\excep\le p\lceil 10a/\varepsilon\rceil$ such that 
\begin{equation}\label{eq:mult2}
\sum{}^{*}\;\frac{1}{W^{r}(\bw,t;f)(\xi,2)}<\exp((\max\{p\cdot h(f)-a,0\}+p(b-a)+\varepsilon)t)
\end{equation}
for any $\xi\in [-\theta_0,\theta_0]$, 
where the sum $\sum^{*}$ is taken over $p$-tuples $\bw=(\bw(1),\cdots\!,\bw(p))$ in $\back(z,t;J;f)^{p}$
with 
\[
T_{f}^{s_{n}(z,\bw(i);t)}(\bw(i))\notin \excep\times\{0\}\quad\mbox{for \ensuremath{i=1,2,\cdots,p}.}
\]
\end{thm}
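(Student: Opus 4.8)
The plan is to prove Theorem \ref{thm:multiplicity} via a large deviation estimate in the space of roof functions, following the heuristic sketched just before its statement. First I would fix $z=(x,y)$ with $x\notin\mathrm{Per}_\delta(n,\tau)$ and organize the $p$-tuples $\bw\in\back(z,t;J;f)^p$ according to the $p$-tuple of their "$n$-step endpoints" $\bigl(T_f^{s_n(z,\bw(i);t)}(\bw(i))\bigr)_i\in(\tau^{-n}(x)\times\{0\})^p$. For a fixed such endpoint tuple, each $S(\bw(i),t;f)$ splits (via the chain rule for $f^{(m)}$ as in (\ref{eq:fn})) as a sum of a "head" contribution $S_{\text{head}}$ depending only on the first $n$ symbols — hence ranging over the finite set $\tau^{-n}(x)$ — and a "tail" contribution scaled by $E^{-1}$ that is essentially independent of $f$ in a way we control. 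The point of excluding $x\in\mathrm{Per}_\delta(n,\tau)$ is to ensure the $n$ preimages under $\tau^{-n}$ are genuinely distinct and well-separated, so that the head values $S_{\text{head}}$ genuinely depend on the choice of branch and perturbing $f$ moves them in linearly independent directions.

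Second, I would set up the probabilistic perturbation: perturb $f$ by a random element of a finite-dimensional space of test functions (with the Gaussian/Lebesgue-type measure used in the definition of prevalence), chosen so that the map $f\mapsto\bigl(S_{\text{head}}(\bw;f)\bigr)_{\bw}$ has, after restricting to the branches that land outside $\excep$, enough spread: the $S$-values should behave like independent variables roughly equidistributed on $[-p\theta_0,p\theta_0]$ at the scale $E^{-1}\le e^{-at}$. Then, grouping the $p$-tuples by which dyadic-scale interval of width $\sim e^{-at}$ around $\xi$ their $S(\bw,t;f)$ falls in, the sum $\sum^*\langle\,\cdot\,\rangle^{-r}$ is dominated by the number of $\bw$ with $S(\bw,t;f)$ within $O(e^{-at})$ of $\xi$, times a convergent geometric tail (here $r>1$ from (\ref{eq:choice_r}) is exactly what makes the tail summable). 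Counting: there are $\le(C_\varepsilon e^{(h(f)+\varepsilon)t})^p$ tuples in total by (\ref{eq:card_backward_orbit}), and the probability that a given tuple lands in a fixed $e^{-at}$-window is $\lesssim e^{-at}$ — so the expected count is $\lesssim e^{(p h(f)-a+\varepsilon)t}$, and a union bound / Chernoff estimate over the (exponentially many but sub-doubly-exponentially many) windows and over a net of $z$'s upgrades this to an almost-sure statement, giving the bound $\exp((\max\{p\,h(f)-a,0\}+\varepsilon)t)$. The factor $e^{p(b-a)t}$ absorbs the variation of $E(w,t;f)$ across the interval $J=[a,b]$ (i.e. the crudeness of replacing $E^{-1}$ by $e^{-at}$ uniformly on $\back(z,t;J;f)$), and $\max\{\cdot,0\}$ covers the regime $a>p\,h(f)$ where even one window typically contains no tuple.

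The role of the exceptional set $\excep$ with $\#\excep\le p\lceil 10a/\varepsilon\rceil$ is the crucial technical device handling the bad event that, for some branches, the head contributions happen to nearly coincide or cluster unusually tightly (which a large deviation bound alone cannot preclude for every single $z$); by removing at most $O(a/\varepsilon)$ of the $n$-step endpoints per coordinate we can kill these rare clusters while only changing the count by a subexponential factor $e^{\varepsilon t/2}$, since each removed endpoint eliminates at most a fraction $e^{-(h(f)-\varepsilon)(t-s_n)}$-worth... more precisely, the endpoints live in $\tau^{-n}(x)$ with $n$ fixed while $t\to\infty$, so removing a bounded number of them is harmless for the asymptotics. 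I expect the main obstacle to be precisely this interplay: making the large deviation estimate uniform in $z$ (which requires a net argument whose cardinality must be controlled against the exponential gains) \emph{while} simultaneously arranging that the unavoidable exceptional branches can be captured by a set $\excep$ of size independent of $t$ — this forces a careful two-scale analysis separating the fixed "head" combinatorics (governed by $n$) from the growing "tail" dynamics (governed by $t$), and getting the constant $\lceil 10a/\varepsilon\rceil$ to actually work is where the delicate counting lives. A secondary difficulty is verifying that the perturbation space can be chosen finite-dimensional with the requisite non-degeneracy uniformly, which is where the hypothesis $f\in\Func(y_{\min},y_{\max},\kappa_0)$ and the quantitative separation from $\mathrm{Per}_\delta(n,\tau)$ get used.
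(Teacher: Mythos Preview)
Your overall strategy matches the paper's: a finite-dimensional perturbation family $f_{\bs}=f+\sum s_k g_k$ for prevalence, organizing $p$-tuples $\bw$ by their $n$-step heads in $(\tau^{-n}(x))^p$, a moment bound against a net of $(z,\xi)$, and Borel--Cantelli. The head/tail splitting and the role of $x\notin\mathrm{Per}_\delta(n,\tau)$ are as you describe, and the factor $e^{p(b-a)t}$ does indeed come from the crude replacement $E^{-1}\to e^{-at}$ across $\back(z,t;J;f)$.

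There is, however, a gap in your concentration step. You propose an expected-count bound followed by a Chernoff/union bound. But Chernoff needs (near-)independence, and the $S$-values here are highly correlated: all $\bw$ sharing the same head $\bx\in(\tau^{-n}(x))^p$ have $S$-values within $O(e^{-at})$ of each other, and even distinct heads that share a component move together under any perturbation of $f$. A direct concentration argument on the full sum $\sum^*$ does not close, and this is precisely why the exceptional set is not optional cosmetics.

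The paper's mechanism is different, and it is what pins down both $q=\lceil 10a/\varepsilon\rceil$ and the size $pq$ of $\excep$. The argument runs by contraposition: if for \emph{every} $\excep$ with $\#\excep\le pq$ the bound (\ref{eq:mult2}) still fails, then after sorting the heads $\bx$ by their individual contribution $\Delta^*(\bx)$ one can extract, by a pigeonhole (Lemma~\ref{lm:findingS}), an \emph{independent array} $(\bx_1,\dots,\bx_q)$ --- meaning each $\bx_j$ has a component not appearing in $\bx_1,\dots,\bx_{j-1}$ --- with every $\Delta^*(\bx_j)$ individually large. The perturbation functions $g_k$ are constructed (Lemma~\ref{lm:family}) so that for any independent array the map $\bs\mapsto(\hat S(\tilde\bx_j,mn;f_{\bs}))_{j=1}^q$ has Jacobian $\ge 1$ on some $q$-plane; hence the parameter integral of the product $\prod_{j=1}^q\langle\ell^{mn}|\xi_0-\hat S(\tilde\bx_j)|\rangle^{-r}$ is $\lesssim\ell^{-mnq}$. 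This is a $q$-th-moment Markov inequality, not Chernoff, and $q\sim a/\varepsilon$ is chosen so that this $q$-fold gain beats the $\sim\ell^{2mn}$ cost of the net $P(m)$ over $(x_0,\xi_0)$. So $\excep$ is not primarily a device for deleting rare clusters; it is the pigeonhole input that \emph{forces} the existence of an independent $q$-array, which is the only structure on which a moment bound is actually available.
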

Note that the point $T_{f}^{s_{n}(z,\bw(i);t)}(\bw(i))$ belongs to $\tau^{-n}(x)\times\{0\}$ from the definition of $s_{n}(z,w;t)$ in (\ref{eq:sn}).
\begin{rem}\label{rem:prevalence}
In the statement above, we used the notion of "prevalence" that is introduced in \cite{MR1161274, MR1191479, Yorke}. A measurable subset $S$ in a linear topological space $X$ is said to be \emph{shy} if there exists a Borel measure $\mu$ such that 
$0<\mu(U)<\infty$ for some compact subset $U\subset X$ and $\mu(S+x)=0$ for any $x\in X$. ($\mu$ is called a transverse measure for $S$.) A shy subset has empty interior. Any countable union of shy subsets is again shy. (This is far from trivial.) A measurable subset $P$ is said to be \emph{prevalent} in $Q\subset X$ if $Q\setminus P$ is shy. (See \cite{Yorke} for the detail. See also \cite{MR1167374} for a similar but different notion which could be used alternatively.)
\end{rem}

The next theorem states that the transversality condition in the theorem above yields an estimate on the essential spectral radius of the transfer operator $\cL^t$. 
\begin{thm}
\label{thm:spectrum} Let $p\ge 1$ be an integer and let  $J_{\nu}=[a_{\nu},b_{\nu}]$, $1\le \nu\le \nu_0$,
be intervals such that the union of their interiors contains the interval $[\bar{\chi}_{\min},\bar{\chi}_{\max}]$.
For $1\le \nu\le \nu_0$, put 
\begin{equation}\label{eq:boundEss} 
\mu_{\nu}=\frac{(p-1)h(f)+\max\{p\cdot h(f)-a_{\nu},0\}+p(b_\nu-a_\nu)+b_{\nu}}{2p}.
\end{equation} 
Suppose that $f_0$ belongs to the prevalent subset 
\[
\mathcal{G}=\bigcap_{\nu= 1}^{\nu_0}\bigcap_{m=1}^\infty \bigcap_{m'=1}^\infty 
\bigcap_{n\ge n_0(1/m)}\mathcal{G}(J_{\nu},n, 1/m, 1/m';p)\subset \Func(y_{\min},y_{\max},\kappa_{0})
\]
where $\mathcal{G}(J,n,\varepsilon,\delta; p)$ is that  in
Theorem \ref{thm:multiplicity}. Then, for any $\eta>0$, there exists a neighborhood $\mathcal{V}$ of $f_0$ in $C^\infty_+(S^1)$ such that, if  $f\in \mathcal{V}$,  the essential spectral radius
of the transfer operator (\ref{eq:cL})  for sufficiently large $t$ is bounded by $e^{(\mu(f)+\eta) t}$ where 
\begin{equation}\label{eq:muf}
\mu(f)=\max\{ \mu_{\nu}\mid \mathrm{int}\,J_{\nu}\cap [\chi_{\min}(f),\chi_{\max}(f)]\neq \emptyset \}.
\end{equation}
\end{thm}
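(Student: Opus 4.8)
The plan is to reduce the estimate on the essential spectral radius of $\cL^t$ to a Lasota--Yorke type inequality on the spaces $\cB^{r,p}(X_f)$, in which the bad term is controlled precisely by the transversality estimate (\ref{eq:mult2}) of Theorem \ref{thm:multiplicity}. First I would recall the construction of $\cB^{r,p}(X_f)$ (deferred to Section \ref{sec:defcBonR}): roughly, the norm on $\cB^{r,p}$ measures, after a suitable microlocal decomposition adapted to the invariant cone field $\cone_0$ of (\ref{eq:DdagTcone}), the $L^2$-behavior of functions weighted by the anisotropic factors built from $W^r(\bw,t;f)$. The point of taking the $p$-th power structure is that applying $\cL^t$ corresponds, on the level of these weights, to summing over $p$-tuples $\bw\in (T_f^t)^{-1}(z)^p$, which is exactly what the left-hand side of (\ref{eq:mult2}) governs. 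The standard Hennion/Nussbaum argument then says that the essential spectral radius is bounded by the exponential growth rate of the "non-compact part" of $\cL^t$, i.e. by $\limsup_{t\to\infty} \frac1t \log \|\cL^t - (\text{compact})\|$, so everything comes down to estimating this Lasota--Yorke constant.

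The key steps, in order, are: (1) Fix $\eta>0$; choose $m,m'$ large so that $1/m$ and $1/m'$ are small compared to $\eta$, and work with $f_0\in\mathcal{G}$, so that for each $\nu$ with $\mathrm{int}\,J_\nu\cap[\chi_{\min}(f_0),\chi_{\max}(f_0)]\neq\emptyset$ the estimate (\ref{eq:mult2}) holds for all large $n$ and all large $t$. (2) Decompose the backward orbit $(T_f^t)^{-1}(z)$ according to the partition of $[\bar\chi_{\min},\bar\chi_{\max}]$ by the intervals $J_\nu$, i.e. write $\cL^t = \sum_\nu \cL^t_{J_\nu}$ where $\cL^t_{J_\nu}$ collects the preimages $w$ with $E(w,t;f)\in[e^{a_\nu t},e^{b_\nu t}]$; by (\ref{eq:bound_Lyapunov}) only the $\nu$ with $\mathrm{int}\,J_\nu\cap[\chi_{\min}(f),\chi_{\max}(f)]\neq\emptyset$ contribute asymptotically. (3) For each such $\nu$, estimate the operator norm of $\cL^t_{J_\nu}$ on $\cB^{r,p}$: the transfer operator raised against the $\cB^{r,p}$-norm produces a sum over $p$-tuples, the anisotropic weight contributes the factor $W^r(\bw,t;f)^{-1}$, the change-of-variables Jacobians contribute $\prod_i E(\bw(i),t;f)^{-1}$ (accounting for the $(p-1)h(f)$ and $b_\nu$ terms after a Cauchy--Schwarz in the $L^2$ pairing and use of (\ref{eq:card_backward_orbit})), and applying (\ref{eq:mult2}) yields exactly the exponent $2p\mu_\nu$ in (\ref{eq:boundEss}); the square root from the $L^2$-structure gives the factor $\exp(\mu_\nu t)$. (4) Handle the finitely many exceptional preimages in $\excep(z,t;f)$ (there are at most $p\lceil 10a_\nu/\varepsilon\rceil$ of them) and the preimages over $x\in\mathrm{Per}_\delta(n,\tau)$ separately: these form a finite-rank (hence compact, modulo a small-norm remainder) perturbation, using that $\delta=1/m'$ can be taken small and $n$ large; this is where the $\excep$-set in Theorem \ref{thm:multiplicity} is absorbed. (5) Combine: $\|\cL^t - (\text{compact})\|_{\cB^{r,p}} \le C_\eta \exp((\mu(f)+\eta/2)t)$ for large $t$, and pass to an open neighborhood $\mathcal{V}$ of $f_0$ by continuity of all the relevant quantities ($h(f)$, $\chi_{\min}(f)$, $\chi_{\max}(f)$, and the operator norms depend continuously, or at least semicontinuously in the right direction, on $f$), noting that $\mu(f)=\mu(f_0)$ on a small enough neighborhood since the $a_\nu,b_\nu$ are fixed and $\chi_{\min},\chi_{\max}$ vary continuously while $\mathrm{int}\,J_\nu$ are open.

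The main obstacle I expect is step (3): making precise how the $\cB^{r,p}$-norm, after one application of $\cL^t$, decomposes into the sum $\sum^*$ over $p$-tuples weighted by exactly $W^r(\bw,t;f)(\xi,2)^{-1}\prod_i E(\bw(i),t;f)^{-1}$, so that the exponents match (\ref{eq:boundEss}) on the nose. This requires a careful microlocal/partition-of-unity bookkeeping: one must check that the cones $(D^\dag T_f^t)_w(\cone_0)$ are correctly tracked under composition, that the "reference direction" $\xi$ in $W^r(\bw,t;f)(\xi,2)$ indeed ranges over $[-\theta_0,\theta_0]$ as dictated by $\cone_0$, and that the off-diagonal terms in the $L^2$-pairing (coming from distinct $p$-tuples whose cones are far apart) are genuinely negligible — this last point being precisely why the transversality estimate (\ref{eq:mult2}) is phrased as a supremum over $\xi$ of a sum of $W^{-r}$. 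A secondary technical nuisance is ensuring the $n$-dependence is harmless: the estimate (\ref{eq:mult2}) holds only for $n\ge n_0(\varepsilon)$ and $t$ large depending on $n$, so one fixes $n$ first (large, depending on $\eta$), then lets $t\to\infty$; the uniformity over $z\in X_f$ with $x\notin\mathrm{Per}_\delta(n,\tau)$ is what allows the estimate to be integrated into an operator-norm bound rather than a pointwise one. I would defer the detailed verification of step (3) to Section \ref{sec:pf1}, where the precise definition of $\cB^{r,p}$ from Section \ref{sec:defcBonR} is available.
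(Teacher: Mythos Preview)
Your overall architecture matches the paper's: reduce to an essential operator norm estimate on the lifted operator, decompose the preimage set $(T_f^t)^{-1}(z)$ into the pieces $\back_\nu$ according to expansion rate, and for each $\nu$ factor as $\Psi_\nu\circ\Phi_\nu$ with Proposition~\ref{lm:local1} controlling $\Phi_\nu$ (giving $e^{b_\nu t/2p}$) and Proposition~\ref{lm:main} controlling $\Psi_\nu$ (the $L^{2p}$ estimate over $p$-tuples, giving $M^{p-1}\Delta_\nu$ with $M\sim e^{h(f)t}$ and $\Delta_\nu$ bounded by~(\ref{eq:mult2})). Your step~(3) is the content of Subsection~\ref{ss:pre}, and your exponent bookkeeping is correct.

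The genuine gap is step~(4). You assert that the excluded preimages ``form a finite-rank (hence compact) perturbation'' because $\#\excep(z,t;f)\le p\lceil 10a_\nu/\varepsilon\rceil$. But $\excep$ is a subset of $\tau^{-n}(x)$, not of $(T_f^t)^{-1}(z)$: the sum $\sum^*$ in~(\ref{eq:mult2}) excludes those $w\in\back(z,t;J;f)$ whose backward orbit passes through $\excep\times\{0\}$ at the $n$-th return, and the number of such $w$ is of order $(\#\excep)\cdot\ell^{k(z,w;t)-n}$, which grows exponentially in $t$. The corresponding piece of $\cL^t$ is neither finite rank nor small in norm. Likewise, the base points $z=(x,y)$ with $x\in\mathrm{Per}_\delta(n,\tau)$ fill an open subset of $X_f$ on which~(\ref{eq:mult2}) gives no information whatsoever; taking $\delta$ small does not make this a compact perturbation.

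The paper's actual treatment (Subsection~\ref{ss:pf_Thspec}) is a separate combinatorial argument, not a throwaway remark. One builds inductively a tree $H_k\subset\tau^{-kn}(x_0)$ of ``bad'' points: $x\in H_k$ iff $x':=\tau^n(x)\in H_{k-1}$ and either $x'\in\mathrm{Per}_{1/m'}(\tau,n)$ or $x\in\excep((x',0),t-t(k-1,x');f)$. The key growth bound $\#H_k\le C_0\ell^n e^{2\varepsilon kn}$ (equation~(\ref{eq:sk})) uses that a point near a short periodic orbit has at most one descendant (over many generations) that is again near a short periodic orbit, once $m'$ is taken large. One then partitions $(T_f^t)^{-1}(b)$ into pieces $Q(k,x)$ indexed by the first level at which the orbit leaves the bad tree; for non-terminal $(k,x)$ one has $x\notin\mathrm{Per}_{1/m'}(\tau,n)$ and the level-$(k{+}1)$ point avoids $\excep$, so the transversality estimate applies to the segment of length $t-t(k,x)$, while only the crude bound $e^{\chi_{\max}\cdot t(k,x)/2p}$ is used on the remaining segment via an intermediate chart at $(x,0)$. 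Summing over the sparse tree and using~(\ref{eq:muf2}) recovers $e^{(\mu(f)+\varepsilon)t}$. This inductive factoring through intermediate charts is the missing idea in your outline.
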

For given $\eta>0$,
we can take the intervals $J_{\nu}=[a_{\nu},b_{\nu}]$, $1\le \nu\le \nu_0$,
narrow enough so that the quantity $\mu(f)$  is
bounded by 
\[
\frac{\left(p-1+\max\{p,\alpha(f)\}\right)h(f)+\eta}{2p}= \rho_p(f)+\frac{\eta}{2p}.
\]
Therefore  Theorem~\ref{th:spec} follows from Theorem~\ref{thm:multiplicity} and Theorem \ref{thm:spectrum}. (The first claim on boundedness of $\cL^t$ is proved in Subsection \ref{ss:defcB}.)

\section{The Banach space $\cB^{r,p}(\protect\real^{2})$}
\label{sec:defcBonR}
In this section,  we define the Banach space $\cB^{r,p}(\real^{2})$
and prove some related lemmas. (The definition resembles that of Besov spaces in \cite[Section 2.3.1]{Triebel}.)
We will construct
the Banach space $\cB^{r,p}(X_{f})$ in (\ref{eq:cL}) using  this Banach space as its local model. 
(But, since $X_f$ is not a manifold, the construction is a little different from the usual one.)

\subsection{Definitions}
We introduce two partitions of unity on
$\real$:
\[
\{\chi_{n}:\real\to[0,1]\}_{m\in\integer_{+}}\quad\mbox{and}\quad \{\rho_{n}:\real\to[0,1]\}_{n\in\integer}.
\]
The former is the Littlewood-Paley
partition of unity, defined by 
\[
\chi_{m}:\real\to[0,1],\quad\chi_{m}(t)=\begin{cases}
\chi(|t|), & \mbox{ if $m=0$;}\\
\chi(2^{-m}|t|)-\chi(2^{-m+1}|t|), & \mbox{ if $m\ge 1$}
\end{cases}
\]
where $\chi$ is that taken in Subsection \ref{ss:bo}. 
The latter  
is defined by 
\[
\rho_{n}=\begin{cases}
\chi(\mathrm{sgn}(x) \sqrt{|x|}-n+1)-\chi(\mathrm{sgn}(x)\sqrt{|x|}-n+2), & \mbox{if $n\ge  1$};\\
\chi(\sqrt{|x|}+1), & \mbox{if $n=0$;}\\
\chi(\mathrm{sgn}(x) \sqrt{|x|}+n+1)-\chi(\mathrm{sgn}(x)\sqrt{|x|}+n+2), & \mbox{if \ensuremath{n\le-1}}.
\end{cases}
\]
Note that the support of the function $\rho_n$ is contained in the interval 
\[
I_{n}=\begin{cases}
[(n-1)^{2},(n+1)^{2}], & \mbox{if $n\ge1$};\\
[-1,1], & \mbox{if $n=0$;}\\
[-(|n|+1)^{2},-(|n|-1)^{2}], & \mbox{if $n\le-1$}
\end{cases}
\]
which contains $\mathrm{sgn}(n)\cdot n^2$ and whose length is comparable with $|n|$. 

Next we define the partition of unity 
\[
\{\chi_{n,m}:\real^{2}\to[0,1]\mid n\in\integer,m\in\integer_{+}\}
\]
on $\real^{2}$ by 
\[
\chi_{n,m}:\real^{2}\to[0,1],\quad\chi_{n,m}(\xi,\eta)=\rho_{n}(\eta)\cdot\chi_{m}(\theta_{0}^{-1}\cdot\langle n\rangle^{-2}\cdot\xi).
\]
The support of the function $\chi_{n,m}$ is contained in the region 
\[
\left([-2^{m+1}\langle n\rangle^2\theta_0,-2^{m-1}\langle n\rangle^2\theta_0]\cup [2^{m-1}\langle n\rangle^2\theta_0, 2^{m+1}\langle n\rangle^2\theta_0]\right)\times I_n
\]
when $m\ge 1$, and  in $
[-2\langle n\rangle^2\theta_0, 2\langle n\rangle^2\theta_0])\times I_n
$ otherwise. 
\begin{defn}
For a real number $r> 0$ and an integer $p\ge1$, we define the
norm $\|\cdot\|_{r,p}$ on the Schwartz space $\mathcal{S}(\real^{2})$
by 
\begin{equation}\label{eq:norm_Brp}
\|u\|_{r,p}=\left(\sum_{n=-\infty}^{\infty}\sum_{m=0}^{\infty}(2^{rm}\cdot\|\Fourier^{-1}\circ \mult(\chi_{n,m})\circ \Fourier u\|_{L^{2p}})^{2p}\right)^{1/2p}
\end{equation}
where $\Fourier$ and $\mult(\varphi)$ denote the  Fourier transform and the multiplication operator by $\varphi$ respectively, and $\|\cdot\|_{L^{2p}}$ denotes the $L^{2p}$ norm. Let $\cB^{r,p}(\real^{2})\subset\mathcal{S}'(\real^{2})$
be the completion of $\mathcal{S}(\real^{2})$ with respect to this
norm. For a subset $K\subset\real^{2}$, we write $\cB^{r,p}(K)$
for the subspace of $\cB^{r,p}(\real^{2})$ that consists of elements
whose support is contained in the closure of $K$. 
\end{defn}
\begin{rem}\label{rem:brpq} We could introduce another parameter $q\in \real$ and define the Banach space 
$\cB^{r,p,q}(\real^{2})$ as the completion of $\mathcal{S}(\real^{2})$ with respect to the norm
\[
\|u\|_{r,p,q}=\left(\sum_{n=-\infty}^{\infty}\sum_{m=0}^{\infty}(2^{rm}\cdot\langle n\rangle^{2q}\cdot\|\Fourier^{-1}\circ \mult(\chi_{n,m})\circ \Fourier u)\|_{L^{2p}})^{2p}\right)^{1/2p}.
\]
We can develop our argument presented below for these more general Banach spaces (regardless of the choice of $q$) in parallel, with slight differences in constants. One advantage of considering such generalization is that we can prove that the eigenfunctions of $\cL^t$ corresponding to the peripheral eigenvalues outside of the essential spectral radius belong to $C^\infty(
X_f)$. This is essentially because $\cap_{r,q} \cB^{r,p,q}(\real^2)= C^\infty(\real^2)$ and because the peripheral eigenvalues and the corresponding eigenfunctions do not depend on the choice of Banach spaces. But we restrict our argument below to the case $q=0$ for simplicity's sake.\end{rem}

For technical argument in the next subsection, 
we introduce slight variants of the Banach space $\cB^{r,p}(\real^{2})$.
For real numbers $S$ and $E>0$, let $A_{S,E}:\real^{2}\to\real^{2}$
be the linear map defined by 
\begin{equation}\label{eq:ASE}
A_{S,E}\begin{pmatrix}x\\
y
\end{pmatrix}=\begin{pmatrix}Ex\\
SEx+y
\end{pmatrix}=\begin{pmatrix}E & 0\\
SE & 1
\end{pmatrix}\begin{pmatrix}x\\
y
\end{pmatrix}.
\end{equation}
The transpose of its inverse is  
\[
A_{S,E}^{\dagger}\begin{pmatrix}\xi\\
\eta
\end{pmatrix}=\begin{pmatrix}E^{-1} & -S\\
0 & 1
\end{pmatrix}\begin{pmatrix}\xi\\
\eta
\end{pmatrix}.
\]
The Banach space $\cB_{S,E}^{r,p}(\real^2)$ is defined 
as the push-forward of $\cB^{r,p}(\real^2)$ by $A_{S,E}$. Precisely we define 
\[
\cB^{r,p}(\real^2)=\{u\in\mathcal{D}'(\real^2)\mid u\circ A_{S,E}\in \cB^{r,p}(\real^2)\}
\]
and equip it with the norm 
\begin{align}\label{eq:rpse}
\|u\|_{r,p,S,E}&:=E^{1/2p}\cdot \|u\circ A_{S,E}\|_{r,p}\\
&=\left(\sum_{n=-\infty}^{\infty}\sum_{m=0}^{\infty}(2^{rm}\|\Fourier^{-1}\circ \mult(\chi_{n,m,S,E})\circ\Fourier u)\|_{L^{2p}})^{2p}\right)^{1/2p}\notag
\end{align}
where $\chi_{n,m,S,E}:=\chi_{n,m}\circ(A_{S,E}^{\dagger})^{-1}$. 

\subsection{Basic estimates}
We provide a few basic lemmas related to the definitions introduced above. Note  that the operator $\Fourier^{-1}\circ \mult(\chi_{n,m})\circ \Fourier$ is written as a convolution operator 
\[
\Fourier^{-1}\circ \mult(\chi_{n,m})\circ \Fourier u=\hat\chi_{n,m}*u
\]
with $
\hat\chi_{n,m}=(2\pi)^{-1}\Fourier^{-1} \chi_{n,m}$. 
\begin{lem}
\label{lem:Fxnm}For arbitrarily large $\nu>0$, there exists a constant
$C_{\nu}$ such that 
\[
|\hat\chi_{n,m}(x,y)|\le C_{\nu}\cdot(2^{m}\langle n\rangle^{3})\cdot\langle 2^{m}\langle n\rangle^{2}|x|\rangle^{-\nu}\cdot\langle\langle n\rangle\cdot|y|\rangle^{-\nu}
\]
uniformly for integers $n$ and $m\ge 0$. In particular, the $L^1$ norm of $\hat\chi_{n,m}$ is uniformly bounded. 
\end{lem}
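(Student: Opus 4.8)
The plan is to exploit that $\chi_{n,m}$ is a tensor product in the $(\xi,\eta)$ variables. By definition,
\[
\chi_{n,m}(\xi,\eta)=\rho_{n}(\eta)\cdot \chi_{m}\!\left(\theta_{0}^{-1}\langle n\rangle^{-2}\xi\right),
\]
so, writing $h_{n,m}(\xi):=\chi_{m}(\theta_{0}^{-1}\langle n\rangle^{-2}\xi)$, there is a fixed constant $c$ (depending only on the Fourier normalization) with
\[
\hat\chi_{n,m}(x,y)=c\cdot \widehat{h_{n,m}}(x)\cdot\widehat{\rho_{n}}(y),
\]
where the hats on the right denote the one-dimensional (inverse) Fourier transforms. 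It therefore suffices to bound each one-dimensional factor separately and multiply the results.

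The tool is the elementary estimate: if $g\in C^{\infty}_{c}(\real)$ is supported in an interval of length at most $L$ and $|g^{(k)}(t)|\le C_{k}B^{-k}$ for $0\le k\le \nu$, then $|\hat g(y)|\le C'_{\nu}\cdot L\cdot\langle B|y|\rangle^{-\nu}$. Indeed, for $B|y|\le 1$ one uses $|\hat g(y)|\le \|g\|_{L^{1}}\le \|g\|_{\infty}L\le C_{0}L$, and for $B|y|\ge 1$ one integrates by parts $\nu$ times, so that $\hat g(y)=(\pm iy)^{-\nu}\int g^{(\nu)}(t)e^{\mp iyt}\,dt$ and hence $|\hat g(y)|\le |y|^{-\nu}\|g^{(\nu)}\|_{L^{1}}\le C_{\nu}|y|^{-\nu}B^{-\nu}L$.

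I would then apply this twice. For $h_{n,m}$: since $\chi_{m}$ is a dyadic rescaling of a fixed smooth bump, with $\supp \chi_{m}\subset\{2^{m-1}\le|t|\le 2^{m+1}\}$ and $|\chi_{m}^{(k)}(t)|\le C_{k}2^{-mk}$ for $m\ge 1$ (and $\supp\chi_{0}\subset[-2,2]$), the function $h_{n,m}$ is supported in an interval of length $\lesssim 2^{m}\theta_{0}\langle n\rangle^{2}$ and satisfies $|h_{n,m}^{(k)}(\xi)|\le C_{k}(2^{m}\theta_{0}\langle n\rangle^{2})^{-k}$; the elementary estimate gives $|\widehat{h_{n,m}}(x)|\le C_{\nu}(2^{m}\langle n\rangle^{2})\langle 2^{m}\langle n\rangle^{2}|x|\rangle^{-\nu}$, the fixed constant $\theta_{0}$ being absorbed into $C_{\nu}$ after comparing $\langle\theta_{0}s\rangle$ with $\langle s\rangle$. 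For $\rho_{n}$: its support is the interval $I_{n}$, whose length is comparable with $\langle n\rangle$, and on $I_{n}$ the map $\eta\mapsto \mathrm{sgn}(\eta)\sqrt{|\eta|}$ has first derivative comparable with $\langle n\rangle^{-1}$ and all higher derivatives no larger; by the chain rule (Fa\`a di Bruno) this yields $|\rho_{n}^{(k)}(\eta)|\le C_{k}\langle n\rangle^{-k}$, hence $|\widehat{\rho_{n}}(y)|\le C_{\nu}\langle n\rangle\langle\langle n\rangle|y|\rangle^{-\nu}$. Multiplying the two bounds gives exactly the asserted pointwise estimate, the prefactor being $2^{m}\langle n\rangle^{2}\cdot\langle n\rangle=2^{m}\langle n\rangle^{3}$. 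The only point demanding a little care — the main, if minor, obstacle — is the derivative bound for $\rho_{n}$ near $\eta=0$, where $\sqrt{|\eta|}$ is not smooth: this causes no trouble because $\chi'$ vanishes in a neighbourhood of the value $1$, so the derivatives of $\chi(\mathrm{sgn}(\eta)\sqrt{|\eta|}-n+1)$ and of $\chi(\sqrt{|\eta|}+1)$ are supported away from $\eta=0$.

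Finally, for the $L^{1}$ claim I would take $\nu\ge 2$ and integrate the pointwise bound. Since $\int_{\real}\langle B|t|\rangle^{-\nu}\,dt=B^{-1}\int_{\real}\langle|s|\rangle^{-\nu}\,ds\lesssim B^{-1}$ for $\nu\ge 2$, integrating in $x$ produces a factor $\lesssim (2^{m}\langle n\rangle^{2})^{-1}$ and integrating in $y$ a factor $\lesssim \langle n\rangle^{-1}$, so
\[
\|\hat\chi_{n,m}\|_{L^{1}}\lesssim (2^{m}\langle n\rangle^{3})\cdot (2^{m}\langle n\rangle^{2})^{-1}\cdot\langle n\rangle^{-1}\lesssim 1
\]
uniformly in $n\in\integer$ and $m\ge 0$.
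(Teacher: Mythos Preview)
Your proof is correct and is essentially the same argument as the paper's, just unpacked in more detail. The paper compresses the computation by observing that the rescaled family
\[
X_{n,m}(\xi,\eta)=\chi_{n,m}\bigl(2^{m}\langle n\rangle^{2}\xi,\ \langle n\rangle(\eta+n|n|)\bigr)
\]
is uniformly bounded in $\mathcal{S}(\real^{2})$, and then reads off the pointwise bound from the corresponding uniform bound on $\Fourier^{-1}X_{n,m}$. Your approach exploits the tensor product structure to reduce to two one-dimensional estimates and then carries out the integration by parts explicitly; this is exactly what underlies the paper's one-line Schwartz-space claim. The tensor-product reduction is a nice simplification, and your treatment of the only delicate point (smoothness of $\rho_{n}$ near $\eta=0$) is adequate, though strictly speaking the paper only asserts $\chi(t)=1$ for $t\le 1$ rather than in a neighbourhood of $1$; this is a harmless reading of the intended cutoff and in any case only concerns finitely many fixed functions.
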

\begin{proof}
The family of functions 
\[
X_{n,m}(\xi,\eta):=\chi_{n,m}(2^{m}\langle n\rangle^{2}\xi,\langle n\rangle(\eta-n|n|))
\]
for $n\in\integer$ and $m\in\integer_{+}$ are uniformly bounded in $\mathcal{S}(\real^{2})$ and therefore so are the family of functions 
\[
\Fourier^{-1}X_{n,m}(x,y)=(2^{-m}\langle n\rangle^{-3})\cdot
e^{i n|n| y}\cdot \Fourier^{-1}\chi_{n,m}(2^{-m}\langle n\rangle^{-2}x,\langle n\rangle^{-1}y).
\]
This implies the conclusion of the lemma.
\end{proof}
Similarly we have
\begin{lem}\label{lem:Fxnm2}
The $L^1$ norm of $\hat\chi_{n,m,S,E}=(2\pi)^{-1}\Fourier^{-1} \chi_{n,m,S,E}$ is bounded by a constant independent of $n$, $m$, $S$ and $E$.
\end{lem}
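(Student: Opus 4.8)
\textbf{Proof plan for Lemma \ref{lem:Fxnm2}.}

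The plan is to reduce the estimate for $\hat\chi_{n,m,S,E}$ to the one already established for $\hat\chi_{n,m}$ in Lemma \ref{lem:Fxnm}, using the fact that $\chi_{n,m,S,E} = \chi_{n,m}\circ (A_{S,E}^\dagger)^{-1}$ is obtained from $\chi_{n,m}$ by a linear change of variables, and that the $L^1$ norm of an inverse Fourier transform transforms in a controlled way under linear substitutions. First I would record the elementary identity: if $g = h\circ B$ for an invertible linear map $B$ on $\real^2$, then $\Fourier^{-1} g = |\det B|^{-1}\, (\Fourier^{-1} h)\circ ({}^{T}B^{-1})$, hence $\|\Fourier^{-1} g\|_{L^1} = \|\Fourier^{-1} h\|_{L^1}$ because the Jacobian factor from the substitution in the $L^1$-integral exactly cancels the $|\det B|^{-1}$ prefactor. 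Applying this with $h = \chi_{n,m}$ and $B = (A_{S,E}^\dagger)^{-1}$, whose transpose-inverse is $A_{S,E}$, gives $\|\hat\chi_{n,m,S,E}\|_{L^1} = \|\hat\chi_{n,m}\|_{L^1}$ up to the fixed normalizing constant $(2\pi)^{-1}$.

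Then I would invoke the last sentence of Lemma \ref{lem:Fxnm}, which asserts precisely that $\|\hat\chi_{n,m}\|_{L^1}$ is bounded uniformly in $n$ and $m$. Combining the two, $\|\hat\chi_{n,m,S,E}\|_{L^1}$ is bounded by a constant independent of $n$, $m$, $S$, and $E$, as claimed. The point worth spelling out carefully is the invariance of the $L^1$ norm of the inverse Fourier transform under the push-forward/pull-back by a linear isomorphism: this is where the $E^{1/2p}$-type normalizations in the definitions are irrelevant (we are looking at the $L^1$ norm of the kernel, not at an operator norm on $\cB^{r,p}$), and it is exactly the reason the dilation parameters $S,E$ drop out entirely.

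The main (and only) obstacle is a bookkeeping one: making sure the affine versus linear distinction and the direction of the maps $A_{S,E}$ versus $A_{S,E}^\dagger$ are handled consistently, so that the linear map appearing in the substitution is genuinely $A_{S,E}^\dagger$ (or its inverse) and the claimed cancellation of Jacobians is exact. Since $A_{S,E}^\dagger$ has determinant $E^{-1}\neq 0$ and is linear (not merely affine), there is no translation to track and the reduction is clean; thus the lemma follows from Lemma \ref{lem:Fxnm} with no new estimate required.
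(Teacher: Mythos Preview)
Your proposal is correct. The identity $\|\Fourier^{-1}(h\circ B)\|_{L^1}=\|\Fourier^{-1}h\|_{L^1}$ for an invertible linear map $B$ is exactly right (the Jacobian factors cancel), and applying it with $h=\chi_{n,m}$ and $B=(A_{S,E}^{\dagger})^{-1}$ gives $\|\hat\chi_{n,m,S,E}\|_{L^1}=\|\hat\chi_{n,m}\|_{L^1}$, which is uniformly bounded by Lemma~\ref{lem:Fxnm}. The paper does not spell out a proof, merely writing ``similarly we have'', which most naturally suggests rerunning the rescaling argument of Lemma~\ref{lem:Fxnm} with the extra linear map $A_{S,E}^{\dagger}$ absorbed into the rescaling; your reduction is a cleaner way to see the same thing, and in fact yields the exact equality of $L^1$ norms rather than just a uniform bound.
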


By abuse of notation, we will write $\hat{\chi}_{n,m}$ also for the convolution operator by $\hat{\chi}_{n,m}$, so that $\hat{\chi}_{n,m}u=\hat{\chi}_{n,m}*u=\Fourier^{-1}\circ \mult({\chi}_{n,m})\circ \Fourier u$.  
\begin{lem}\label{lem:xnm_trace}
For integers $n$ and $m\ge 0$ and for a bounded region $U\subset \real^2$, the convolution operator 
\[
\hat{\chi}_{n,m}=\Fourier^{-1}\circ \mult({\chi}_{n,m})\circ \Fourier:L^{2p}(U)\to L^{2p}(\real^2)
\]
is a trace class operator. There exists a constant $C_0>0$, independent of $n$, $m$ and $U$, such that
\[
\|\hat{\chi}_{n,m}:L^{2p}(U)\to L^{2p}(\real^2)\|_{\Tr}\le 
C_0 \cdot 2^m \langle n\rangle^3\cdot |U|_{n,m}
\]
where $\|\cdot\|_{\Tr}$ denotes the trace norm and 
\[
|U|_{n,m}:=\int \max_{(x,y)\in U}\left(
\langle 2^m \langle n\rangle^2 |x-x'|\rangle^{-2}\cdot 
\langle \langle n\rangle |y-y'|\rangle^{-2} \right)dx' dy'.
\]
\end{lem}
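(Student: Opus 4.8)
The plan is to reduce the trace-norm estimate to a known bound on Fourier multiplier operators with compactly supported symbols, combined with the pointwise decay of $\hat\chi_{n,m}$ obtained in Lemma~\ref{lem:Fxnm}. First I would observe that since $\hat\chi_{n,m}$ is a convolution operator, its Schwartz kernel is $(x,y;x',y')\mapsto \hat\chi_{n,m}(x-x',y-y')$, and the composition with the restriction to a bounded region $U$ has kernel $\mathbb{1}_{U}(x',y')\cdot\hat\chi_{n,m}(x-x',y-y')$. A clean way to get the trace-norm bound is to factor $\hat\chi_{n,m}$ through an auxiliary multiplier: write $\chi_{n,m}=\chi_{n,m}\cdot\psi_{n,m}$ where $\psi_{n,m}$ is a slightly fattened bump equal to $1$ on $\supp\chi_{n,m}$ and supported on a region of comparable size; then $\hat\chi_{n,m}=\hat\psi_{n,m}*\hat\chi_{n,m}$, so the operator factors as the composition of $\hat\chi_{n,m}:L^{2p}(U)\to L^{2p}(\real^2)$ (bounded, by Lemma~\ref{lem:Fxnm}, with operator norm controlled by $\|\hat\chi_{n,m}\|_{L^1}$, which is uniformly bounded) with a Hilbert--Schmidt-type smoothing by $\hat\psi_{n,m}$. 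The trace norm of the composite is then estimated by $\|\hat\psi_{n,m}\ast(\cdot)\|$ in an appropriate Schatten sense times the operator norm of the first factor.

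Concretely, the key steps in order would be: (i) Use Lemma~\ref{lem:Fxnm} to record that $|\hat\chi_{n,m}(x,y)|\le C_\nu\cdot 2^m\langle n\rangle^3\cdot\langle 2^m\langle n\rangle^2|x|\rangle^{-\nu}\langle\langle n\rangle|y|\rangle^{-\nu}$, and hence the analogous bound for $\hat\psi_{n,m}$ with the support region enlarged only by a bounded factor, so that the profile (anisotropic dilation) is unchanged up to constants. (ii) Estimate the trace norm of the kernel $\mathbb{1}_U(x',y')\hat\chi_{n,m}(x-x',y-y')$ by a standard device: bound the trace norm of an integral operator by $\int \big(\sup_{(x,y)} |K(x,y;x',y')|\big)^{?}$-type quantities, or more cleanly by splitting $K=K_1 K_2$ with $K_1,K_2$ Hilbert--Schmidt and using $\|K_1K_2\|_{\Tr}\le\|K_1\|_{HS}\|K_2\|_{HS}$. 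The natural split uses $\hat\psi_{n,m}=\hat\psi_{n,m}^{(1)}*\hat\psi_{n,m}^{(2)}$ into two square-roots (by taking $\psi_{n,m}=(\psi_{n,m}^{(1/2)})^2$ pointwise in Fourier, i.e. $\psi_{n,m}^{(1/2)}=\sqrt{\psi_{n,m}}$, still smooth if $\psi_{n,m}$ is chosen to be a perfect square of a smooth bump). Then $\hat\chi_{n,m}:L^{2p}(U)\to L^{2p}(\real^2)$ equals $T_2\circ T_1$ where $T_1=\mult(\hat\psi_{n,m}^{(1/2)})$-type convolution restricted to $U$ and $T_2$ carries the full $\hat\chi_{n,m}$-multiplier; compute $\|T_1\|_{HS(L^2)}^2=\int_U\|\hat\psi_{n,m}^{(1/2)}\|_{L^2}^2\,\asymp\,|U|\cdot 2^m\langle n\rangle^3$ after accounting for the $L^2$-normalization of the anisotropically dilated profile, and bound $\|T_2\|$ on $L^2\to L^{2p}$ or $L^{2p}\to L^{2p}$ by $\|\hat\chi_{n,m}\|_{L^1}\le C$. (iii) Reassemble, replacing $|U|$ by the weighted quantity $|U|_{n,m}$: the point is that the decay factor $\langle 2^m\langle n\rangle^2|x-x'|\rangle^{-2}\langle\langle n\rangle|y-y'|\rangle^{-2}$ appearing in $|U|_{n,m}$ is exactly the $L^2$-mass profile of $\hat\chi_{n,m}$ localized near $U$ (up to the prefactor $2^m\langle n\rangle^3$), so carrying the $\sup_{(x,y)\in U}$ inside the $(x',y')$ integral gives precisely $2^m\langle n\rangle^3\cdot|U|_{n,m}$ rather than the cruder $2^m\langle n\rangle^3\cdot|U|$. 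Multiplying the two factors and absorbing constants yields $\|\hat\chi_{n,m}:L^{2p}(U)\to L^{2p}(\real^2)\|_{\Tr}\le C_0\cdot 2^m\langle n\rangle^3\cdot|U|_{n,m}$ with $C_0$ independent of $n$, $m$, $U$.

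The main obstacle I anticipate is the bookkeeping of the anisotropic scaling together with the $L^{2p}$ (rather than $L^2$) target: trace norms are most naturally estimated between Hilbert spaces, so one must either first prove the bound on $L^2(U)\to L^2(\real^2)$ and then upgrade to $L^{2p}$ using that convolution by an $L^1$ kernel is bounded on every $L^q$ (so that $T_2$ on $L^2\to L^{2p}$ costs only a uniformly bounded constant by the decay of $\hat\chi_{n,m}$, after a Bernstein-type inequality for the frequency-localized piece — admissible since $\supp\chi_{n,m}$ has volume $\asymp 2^m\langle n\rangle^3$, but that volume factor must be tracked and reconciled with the asserted prefactor), or work with $2p$-Schatten-type ideals directly. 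Either way, the bound on $\|T_2\|$ on $L^{2p}$ should cost only a constant, not a power of $2^m\langle n\rangle$, because $\|\hat\chi_{n,m}\|_{L^1}$ is uniformly bounded by Lemma~\ref{lem:Fxnm}; verifying that the Bernstein/embedding step does not reintroduce a volume factor beyond the stated $2^m\langle n\rangle^3$ is the delicate point. Everything else is the routine Hilbert--Schmidt computation $\|K\|_{\Tr}\le\|K_1\|_{HS}\|K_2\|_{HS}$ together with the elementary integral identity $\int \langle a|x|\rangle^{-2}\,dx\asymp a^{-1}$ in one variable, applied once in $x'$ with $a=2^m\langle n\rangle^2$ and once in $y'$ with $a=\langle n\rangle$.
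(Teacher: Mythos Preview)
Your overall strategy---factor $\chi_{n,m}=\psi_{n,m}\cdot\chi_{n,m}$ through a fattened bump, then exploit the decay from Lemma~\ref{lem:Fxnm}---matches the paper's setup (the paper uses $\chi'_{n,m}=\sum^*\chi_{n',m'}$ in place of your $\psi_{n,m}$). But the execution diverges at the key step, and your version has a real obstacle that the paper's route sidesteps.

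The paper does \emph{not} use a Hilbert--Schmidt factorization. Instead it writes the operator as a continuous superposition of \emph{rank-one} operators:
\[
\hat\chi_{n,m}u=\int \phi_{z'}u\,dz',\qquad \phi_{z'}u(z)=\bigl(\textstyle\int\hat\chi_{n,m}(z'-z'')u(z'')\,dz''\bigr)\cdot\hat\chi'_{n,m}(z-z').
\]
For a rank-one operator $u\mapsto \ell(u)\cdot v$ between Banach spaces, the nuclear (trace) norm is exactly $\|\ell\|_{X^*}\|v\|_Y$; no Hilbert structure is needed. Thus $\|\phi_{z'}\|_{\Tr}\le \|\hat\chi_{n,m}(z'-\cdot)\mathbb{1}_U\|_{L^q}\cdot\|\hat\chi'_{n,m}\|_{L^{2p}}$ with $q^{-1}+(2p)^{-1}=1$, and Lemma~\ref{lem:Fxnm} (with $\nu$ large) gives the pointwise bound $C_0\,2^m\langle n\rangle^3\max_{(x,y)\in U}\langle 2^m\langle n\rangle^2|x-x'|\rangle^{-2}\langle\langle n\rangle|y-y'|\rangle^{-2}$. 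Integrating in $z'$ yields $|U|_{n,m}$ on the nose.

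Your Hilbert--Schmidt split $\|K_1K_2\|_{\Tr}\le\|K_1\|_{HS}\|K_2\|_{HS}$ is only available on Hilbert spaces; on $L^{2p}$ with $p>1$ there is no Hilbert--Schmidt ideal, and ``proving it first on $L^2$ then upgrading'' does not transfer nuclear-norm bounds---a bounded map $L^2\to L^{2p}$ composed with a trace-class map on $L^2$ need not be trace class as an operator into $L^{2p}$ with the same norm. You correctly flag this as the delicate point, but the resolution you sketch (Bernstein plus $L^1$-boundedness of the kernel) controls operator norms, not nuclear norms. The paper's rank-one decomposition is precisely what makes the Banach-space nuclear estimate immediate and produces the $|U|_{n,m}$ factor without the hand-wave in your step~(iii).
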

\begin{rem}
Since we consider operators between Banach spaces, it might be more standard to use the term "nuclear operator" and "nuclear norm" instead of "trace class operator" and "trace norm".
For the definition and basic properties of trace class (or nuclear) operators, we refer \cite[Ch.~5]{GGK}.
\end{rem}
\begin{proof}
Let us set $\chi'_{n,m}:=\sum^*_{n',m'} \chi_{n',m'}$ where the sum $\sum^*$ is taken over $(n',m')$ such that $\supp \chi_{n',m'}\cap \supp \chi_{n,m}\neq \emptyset$. Since $\chi'_{n,m}\cdot \chi_{n,m}=\chi_{n,m}$, we may write the operator $\hat{\chi}_{n,m}$ as 
\begin{align*}
\hat{\chi}_{n,m}u&= \hat\chi'_{n,m}* \hat\chi_{n,m}* u=\int  \phi_{z'} u \,dz'
\end{align*}
where $\phi_{z'}$ is the rank one operator defined by
\[
\phi_{z'} u(z)= \left(\int \hat \chi_{n,m}(z'-z'') u(z'') dz''\right)\cdot \hat\chi'_{n,m}(z-z').
\]
From Lemma \ref{lem:Fxnm}, we have
\begin{align*}
&\|\phi_{z'}:L^{2p}(U)\to L^{2p}(\real^2)\|_{\Tr}\\
&\qquad \le C_0 \langle n\rangle^{3} 2^{m}\cdot \max_{(x,y)\in U}\left(
\langle 2^m \langle n\rangle^2 |x-x'|\rangle^{-2}
\langle \langle n\rangle |y-y'|\rangle^{-2} \right)
\end{align*}
for $z'=(x',y')$. Indeed the left hand side is bounded by 
\[
\|\hat{\chi}_{n,m}\|_{L^{2p}}\cdot \|\hat\chi'_{n,m}\|_{(L^{2p})^*}= \|\chi_{n,m}\|_{L^{2p}}\cdot \|\hat\chi'_{n,m}\|_{L^q}
\]
with $q>0$ such that $q^{-1}+(2p)^{-1}=1$ and hence by $C_0 \langle n\rangle^{3} 2^{m}$ at least. Because $\nu>0$ in Lemma \ref{lem:Fxnm} is arbitrarily large, we can get the latter term on the right hand side in addition.  
Finally we obtain the lemma by the triangle inequality.  
\end{proof}

For the purpose of  extracting the low-frequency part of functions, we consider the operators  
\[
\smoothing_{k}:\mathcal{S}'(\real^{2})\to\mathcal{S}(\real^{2}),\quad\smoothing_{k}u=
\sum_{n,m: 2^m \langle n\rangle^2\le k}\hat{\chi}_{n,m}u
\]
for integers $k>0$. 
If  $U\subset\real^{2}$
is a bounded region, the operator  $\smoothing_{k}:\cB^{r,p}(U)\to \cB^{r,p}(\real^{2})$ is a trace class operator from Lemma \ref{lem:xnm_trace} and hence compact. 

As a model of  the semiflow $T_f^t$ viewed in local charts (that we will choose in the next section), we consider a  $C^{\infty}$ diffeomorphism 
\begin{equation}\label{eq:A}
A:V\to A(V)\subset\real^{2}, \quad A(x,y)=(Ex,y+g(Ex))
\end{equation}
where $E\ge 1$, $V:=(-E^{-1}\eta_{*},E^{-1}\eta_{*})\times\real\subset \real^2$ with some small $\eta_*>0$ and $g:(-\eta_{*},\eta_{*})\to\real$ is a $C^{\infty}$ function satisfying $|g'(x)|\le \gamma_0 \theta_0$. 
Letting $\varphi:\real^{2}\to\real$ 
be a $C^{\infty}$ function whose support is contained in $(-\eta_*,\eta_*)\times \real$, we consider the transfer operator 
\begin{equation}\label{eq:L_local}
L:C^{\infty}_0(V)\to C^{\infty}_0(A(V)),\quad Lu=(\varphi\cdot u)\circ A^{-1}.
\end{equation}
In the next proposition, we suppose that the function $\varphi(x,y)\in C^\infty_0(\real^2)$ satisfies
\begin{equation}\label{eq:py}
\left\|\frac{\partial^m \varphi}{\partial y^m}\right\|_{\infty}\le K_m\quad \mbox{for $m\ge 0$}
\end{equation}
for some given constants $K_m>0$. When we apply the proposition below in the next section, we will consider many different functions as $\varphi$, which satisfy the condition (\ref{eq:py}) for some uniform constants $K_m$.
\begin{prop}
\label{lm:local1}  
If we have (in addition to the setting above) that
\begin{equation}\label{eq:var_g}
|g'(x)-g'(0)|<(1-\gamma_{0})\theta_{0}/E\quad
\mbox{for all $x\in (-\eta_*,\eta_*)$},
\end{equation} 
the operator $L$ extends to a bounded operator
\[
L:\cB^{r,p}(V)\to \cB_{S,E}^{r,p}(A(\supp\varphi))\quad \mbox{where $S=g'(0)$.}
\] 
There exists a constant $C_0>0$, which depends only on $p$, $r$ and the constants $K_m$'s in $(\ref{eq:py})$, such that we have
\[
\|L\circ(1-\smoothing_{k}):\cB^{r,p}(V)\to \cB_{S,E}^{r,p}(A(\supp \varphi))\|\le C_{0} E^{1/2p}
\]
provided that  we take  sufficiently large $k>0$ according to $A$ and $\varphi$. 
\end{prop}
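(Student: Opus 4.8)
The plan is to reduce the statement to a local, frequency-by-frequency estimate on the convolution pieces $\hat\chi_{n,m}$ and then sum the resulting geometric series. First I would unwind the definitions: by (\ref{eq:rpse}), bounding $L\circ(1-\smoothing_k)$ as an operator into $\cB^{r,p}_{S,E}(A(\supp\varphi))$ amounts to controlling, for each target index $(n,m)$, the $L^{2p}$ norm of $\hat\chi_{n,m,S,E}*\big((\varphi\cdot u)\circ A^{-1}\big)$ where $u$ has had its low-frequency part (indices with $2^{m'}\langle n'\rangle^2\le k$) removed. The key geometric input is that $A$ is a perturbation of the affine map $A_{S,E}$ with $S=g'(0)$: indeed $A(x,y)=A_{S,E}(x,y)+(0,g(Ex)-g(0)- Sx)$, and hypothesis (\ref{eq:var_g}) together with $|g'|\le\gamma_0\theta_0$ says the nonlinear remainder has $x$-derivative bounded by $(1-\gamma_0)\theta_0/E$. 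Pulling back the cotangent cone picture (cf.\ (\ref{eq:DdagTcone})), this forces $(A^{\dagger})$ to map the support region of $\chi_{n,m}$ into the support region of a bounded number of $\chi_{n',m',S,E}$, i.e.\ the composition preserves the dyadic-in-$\xi$, square-root-in-$\eta$ decomposition up to finitely many neighboring indices — this is exactly the role of the factor $E^{1/2p}$ (the Jacobian), which accounts for the normalization $E^{1/2p}$ in (\ref{eq:rpse}).

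Next I would pass to the almost-orthogonality / almost-diagonality estimate. Using Lemma \ref{lem:Fxnm} (and Lemma \ref{lem:Fxnm2} for the $(S,E)$-twisted version), the operator $\hat\chi_{n,m,S,E}\circ L\circ \hat\chi_{n',m'}$ has operator norm on $L^{2p}$ that decays rapidly in the "distance" between $(n,m)$ and the image of $(n',m')$ under the map on frequency space induced by $A^{-1}$. Concretely one writes $\hat\chi_{n,m,S,E}*((\varphi\cdot u)\circ A^{-1})$ as an integral of rank-one-type kernels, estimates the kernels by the pointwise bounds in Lemma \ref{lem:Fxnm}, uses Young's inequality (the $L^1$ bounds on $\hat\chi_{n,m}$), and exploits that $\varphi$ and its $y$-derivatives are bounded via (\ref{eq:py}) — non-stationary phase in the $y$-variable gives the rapid decay, with constants depending only on the $K_m$. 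Summing over the source indices $(n',m')$ and over the target indices $(n,m)$, weighted by the $2^{rm}$ factors, yields a bounded operator $L:\cB^{r,p}(V)\to\cB^{r,p}_{S,E}(A(\supp\varphi))$ with norm $\le C_0 E^{1/2p}$; this is the first assertion (and incidentally reproves boundedness referenced at the end of Section \ref{sec:generic}).

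Finally, to get the improved bound with $(1-\smoothing_k)$, I would observe that composing with $(1-\smoothing_k)$ restricts the source indices to $\{(n',m'):2^{m'}\langle n'\rangle^2>k\}$. On the low-frequency target indices (those with $2^m\langle n\rangle^2$ comparable to or below $k$), the almost-diagonality established above shows the contribution is $O(k^{-\infty})$ — more precisely it can be made $\le \tfrac12 C_0 E^{1/2p}$ by taking $k$ large, because the off-diagonal decay is faster than any polynomial. On the high-frequency target indices, one does not gain from $(1-\smoothing_k)$ directly, but here one uses the genuine contraction coming from $r>0$: because $A$ expands in $x$ by $E\ge1$ while the $y$-direction is neutral, the induced map on frequencies sends $(n',m')$ with large $m'$ to $(n,m)$ with $m$ roughly $m'+\log_2 E$, and the $2^{rm}/2^{rm'}$ ratio is bounded, so the high-target sum is controlled by the same $C_0 E^{1/2p}$; but restricting the source to high frequencies and the target also to high frequencies, and choosing $k$ large, the total high-target contribution not already counted is again absorbed into $\tfrac12 C_0 E^{1/2p}$. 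Combining the two regimes gives the claimed bound $\|L\circ(1-\smoothing_k):\cB^{r,p}(V)\to\cB^{r,p}_{S,E}(A(\supp\varphi))\|\le C_0 E^{1/2p}$ for $k$ large depending on $A$ and $\varphi$.

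The main obstacle I anticipate is making the "almost-diagonality" bookkeeping genuinely uniform: one must track how the curved map $A$ (as opposed to the affine $A_{S,E}$) distorts the anisotropic dyadic boxes $\supp\chi_{n,m}$, show that hypothesis (\ref{eq:var_g}) is exactly what keeps the image inside $O(1)$ neighboring boxes of the $(S,E)$-system uniformly in $n,m$ and in the parameters $S,E,g$, and verify that the resulting kernel estimates (via Lemma \ref{lem:Fxnm}, integration by parts in $y$, and Young's inequality) produce summable off-diagonal decay with constants depending only on $p,r$ and the $K_m$. Everything else — summing geometric series in $m$ weighted by $2^{rm}$, and pushing the leftover low-frequency piece below $\tfrac12 C_0 E^{1/2p}$ by enlarging $k$ — is routine once this uniform almost-diagonality is in hand.
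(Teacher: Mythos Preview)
Your overall strategy --- almost-diagonality of the localized pieces $\hat\chi_{n',m'}\circ L\circ\hat\chi'_{n,m}$ via kernel estimates and integration by parts, then summing --- matches the paper's. But two points need correction.

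First, the paper begins with a reduction you only allude to and then abandon: since $\cB^{r,p}_{S,E}$ is by definition the push-forward of $\cB^{r,p}$ by $A_{S,E}$ with norm scaled by $E^{1/2p}$, and since $A_{S,E}^{-1}\circ A$ satisfies the hypotheses with $E=1$, $S=0$, it suffices to prove the bound $C_0$ (no $E^{1/2p}$) in that case. Your third paragraph instead tracks $E$ directly and asserts that ``the induced map on frequencies sends $(n',m')$ with large $m'$ to $(n,m)$ with $m$ roughly $m'+\log_2 E$, and the $2^{rm}/2^{rm'}$ ratio is bounded'' --- but $2^{r(m'+\log_2 E)}/2^{rm'}=E^r$, which is not bounded, so this step as written fails. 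After the reduction there is no such shift: source and target indices are comparable.

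Second, you misidentify the mechanism by which $(1-\smoothing_k)$ yields a uniform constant. The paper proves \emph{two} bounds on $\|\hat\chi_{n',m'}\circ L\circ\hat\chi'_{n,m}\|_{L^{2p}}$: one, $C_\nu\,\Delta_1(n,n')^{-\nu}$ with $C_\nu$ depending only on the $K_m$'s (integration by parts in $y$ alone, where $A$ is affine); the other with an additional factor $\Delta_2(n,m,n',m')^{-\nu}$ but constant $C(A,\varphi,\nu)$ depending on all derivatives of $A$ (integration by parts in both variables of $w$). The dangerous components are the \emph{frequency-increasing} ones, $m'>0$ and $2^{m'}\langle n'\rangle^2>2^{m+4}\langle n\rangle^2$, where the weight ratio $2^{2pr(m'-m)}$ is unfavorable; only the second estimate controls these, and its $A$-dependent constant is precisely why $k$ must be taken large depending on $A,\varphi$: with the source restricted to $2^m\langle n\rangle^2>k$, their total contribution is $\lesssim C(A,\varphi,\nu)\,k^{-\nu+O(1)}\|u\|$, hence arbitrarily small. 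All remaining components have $m'=0$ or $2^{m'}\langle n'\rangle^2\le 2^{m+4}\langle n\rangle^2$, and for those the first (uniform) estimate plus the now-favorable weight gives $C_0$. Your account reverses this: there is no ``genuine contraction from $r>0$'' to invoke on the high-target side, and the low-target/high-source direction is not where the difficulty lies.
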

\begin{proof}
Recall the linear map $A_{S,E}$ in (\ref{eq:ASE}). The diffeomorphism $A_{S,E}^{-1}\circ A$ satisfies the assumption on $A$ for the case $E=1$ and $S=0$. Recall also that $\cB_{S,E}^{r,p}(\real^2)$ is defined as the push-forward of $\cB^{r,p}(\real^2)$ by $A_{S,E}$ and equipped with the norm (\ref{eq:rpse}) having the factor $E^{1/2p}$. Hence, to prove the statement of the lemma, it is enough to prove it in the case $E=1$ and $S=0$ (and $\cB_{S,E}^{r,p}(\real^2)=\cB^{r,p}(\real^2)$ consequently). 

We assume $E=1$ and $S=0$. Take $u\in\mathcal{S}(\real^{2})$ arbitrarily and set 
\begin{align*}
&u_{n,m}  =\hat{\chi}_{n,m}(u),\\
&v_{(n,m)\to(n',m')} =\hat{\chi}_{n',m'}(L u_{n,m})=\hat{\chi}_{n',m'}\circ L\circ\hat{\chi}'_{n,m}(u_{n,m})
\intertext{and}
&v_{n',m'} =\hat{\chi}_{n',m'}(Lu)=
\sum_{(n,m)} v_{(n,m)\to(n',m')} 
\end{align*}
where ${\chi}'_{n,m}$ is that defined in the proof of Lemma \ref{lem:xnm_trace} and $\hat{\chi}'_{n,m}$ denotes the convolution operator by the function $(2\pi)^{-1}\Fourier^{-1}{\chi}'_{n,m}$. Since $(1-\smoothing_{k})$ on $\cB^{r,p}(\real^2)$ is bounded uniformly in $k$  and cut off the low-frequency components, it suffices to show 
\begin{equation}\label{eq:required}
\sum_{n',m'}(2^{rm'}\|v_{n',m'}\|_{L^{2p}})^{2p}\le C_0
\sum_{n,m} (2^{rm}\|u_{n,m}\|_{L^{2p}})^{2p}
\end{equation}
assuming that  $u_{n,m}$ vanishes when $2^m \langle n\rangle^2\le k$  for some large $k$. 

We estimate the operator norm of $\hat{\chi}_{n',m'}\circ L\circ\hat{\chi}'_{n,m}$ on $L^{2p}(\real^2)$. Let us set
\begin{equation}\label{eq:Delta1}
\Delta_{1}(n,n')=\begin{cases}
1, & \mbox{if }|n-n'|\le 3;\\
\max\{n,n'\}, & \mbox{otherwise}
\end{cases}
\end{equation}
and 
\begin{align*}
&\Delta_{2}(n,m,n',m')\\
&\qquad=\begin{cases}
1, &\mbox{if either $m'=0$ or $2^{m'}\langle n'\rangle^{2}\le 2^{m+4}\langle n\rangle^{2}$;}\\
\max\{2^{m}\langle n\rangle^{2},2^{m'}\langle n'\rangle^{2}\}, & \mbox{otherwise}.
\end{cases}
\end{align*}
We are going to prove two estimates: One is that, for any $\nu>0$, there exists a constant $C_\nu>0$, depending only on $\nu$ and the constants $K_m$'s in (\ref{eq:py}), such that 
\begin{equation}\label{eq:basic_est}
\|\hat{\chi}_{n',m'}\circ L\circ\hat{\chi}'_{n,m}\|_{L^{2p}}\le C_\nu \Delta_{1}(n,n')^{-\nu}
\end{equation}
for any combination  $(n,m,n',m')$.
The other is that, for any $\nu>0$, there exists a constant $C(A,\varphi,\nu)$, depending $\nu$, $A$ and $\varphi$, such that 
\begin{equation}\label{eq:vnm}
\|\hat{\chi}_{n',m'}\circ L\circ\hat{\chi}'_{n,m}\|_{L^{2p}}\le C(A,\varphi,\nu)\cdot\Delta_{1}(n,n')^{-\nu}\cdot\Delta_{2}(n,m,n',m')^{-\nu}
\end{equation}
for any combination  $(n,m,n',m')$.

The required estimate (\ref{eq:required}) will follow from (\ref{eq:basic_est}) and (\ref{eq:vnm}). 
By using H\"older inequality, we see that the left hand side of (\ref{eq:required}) is bounded by
\[
\sum_{n',m'}2^{2prm'}\!\left\|\sum_{n,m}v_{(n,m)\to (n',m')}\right\|_{L^{2p}}^{2p}\le 
\sum_{n',m'}\sum_{n,m}\Delta_{n,m,n',m'}\cdot 2^{2prm}\left\|v_{(n,m)\to (n',m')}\right\|_{L^{2p}}^{2p}
\]
where
\[
\Delta_{n,m,n',m'}=C_0 \cdot 2^{2pr(m'-m)}\cdot 2^{|m'-m|}\cdot \langle n-n'\rangle^{2}
\]
with $C_0>0$ a constant depending only on $p$ and $r$. 
The estimate (\ref{eq:vnm}) with large $\nu$ implies that the components $v_{(n,m)\to(n',m')}$ is very small 
if $m'>0$ and $2^{m'}\langle n'\rangle^{2}> 2^{m+4}\langle n\rangle^{2}$. (Recall that we suppose 
$u_{n,m}$ vanishes when $2^m\langle n\rangle^2\le k$  for some large $k$.)
Hence the sum on the right hand side above over such combinations $(n,m,m',m')$ are negligible or more precisely bounded by $c\cdot \sum_{n,m} (2^{rm}\|u_{n,m}\|_{L^{2p}})^{2p}$ and we may let the constant $c>0$ be arbitrarily small by letting $k$ large. 
To the remaining components for which either $m'=0\le m$ or $2^{m'}\langle n'\rangle^{2}\le 2^{m+4}\langle n\rangle^{2}$ holds, 
we apply the estimate (\ref{eq:basic_est}) with large $\nu$. Then we obtain the required estimate (\ref{eq:required}) by elementary computation. 

To prove (\ref{eq:basic_est}) and (\ref{eq:vnm}), we look into the integral kernel of  $\hat{\chi}_{n',m'}\circ L\circ\hat{\chi}'_{n,m}$ and estimate it by using integration by parts.  Though the following argument is elementary and already presented in \cite{BT08}, we give it to some detail for completeness. (We will use a similar argument later, where we will omit the proof.) 
To begin with, let us make the following observation which motivates the definitions of $\Delta_1(\cdot)$ and $\Delta_2(\cdot)$: There exists a small constant $c>0$ such that, for any $(\xi',\eta')\in \supp \chi_{n',m'}$ and any $(\tilde{\xi},\tilde{\eta})\in DA^{\dag}_w(\supp \chi'_{n,m})$ with $w\in V$, we have
\begin{equation}\label{eq:ydiff}
|\eta'-\tilde{\eta}|\ge c \max\{|n|,|n'|\}\quad \mbox{ if \quad$|n-n'|\ge 4$}
\end{equation}
and
\begin{align}\label{eq:xdiff}
|\xi'-\tilde{\xi}|\ge c \max\{2^{m}\langle n\rangle^{2}&,2^{m'}\langle n'\rangle^{2}\}\quad \mbox{if \; $m'>0$\; and\; $2^{m'}\langle n'\rangle^{2}> 2^{m+4}\langle n\rangle^{2}$}.
\end{align}

Next let us write the operator $\hat{\chi}_{n',m'}\circ L\circ\hat{\chi}'_{n,m}$  as an integral operator 
\[
\hat{\chi}_{n',m'}\circ L\circ\hat{\chi}'_{n,m} u(z')=
(2\pi)^{-2}\int K(z',z)u(z) dz
\]
with the integral kernel
\begin{align}\label{eq:K}
K(&z',z)=\\
& \int e^{i\theta'\cdot (z'-w)+i\theta \cdot (A^{-1}(w)-z)}\cdot \chi_{n',m'}(\theta') \cdot \chi'_{n,m}(\theta) \cdot \varphi(A^{-1}(w)) d\theta d\theta' dw.\notag
\end{align}
To apply integration by parts, we consider the differential operators
\[
\mathcal{D}_1=\frac{1-i(\eta -\eta') \cdot \partial_y}{1+|\eta -\eta'|^2},
\quad
\mathcal{D}_2=\frac{1-i(DA^\dag_w\theta -\theta') \cdot \partial_w}{1+|DA^\dag_w\theta -\theta' |^2}\]
expressed in the coordinates $\theta=(\xi,\eta)$, $\theta'=(\xi',\eta')$ and $w=(x,y)$. These satisfy
\[
\mathcal{D}_j e^{i(\theta \cdot A^{-1}(w)-\theta'\cdot w)}=e^{i(\theta \cdot A^{-1}(w)-\theta'\cdot w)},\quad j=1,2.
\]
(For the case $j=1$, note that $A$ is written in the form (\ref{eq:A}).)
Hence 
\begin{align*}
\int e^{i(\theta' \cdot A^{-1}(w)-\theta\cdot w)} \Phi(w) dw&=
\int \left(\mathcal{D}_je^{i(\theta' \cdot A^{-1}(w)-\theta\cdot w)}\right)\Phi(w) dw \\
&= \int e^{i(\theta' \cdot A^{-1}(w)-\theta\cdot w)}\cdot  \left({}^t\mathcal{D}_j\right)\Phi(w) dw
\end{align*}
for $j=1,2$, 
where ${}^t\mathcal{D}_j$ denotes the transpose of $\mathcal{D}_j$ with respect to the $L^2$ inner product. 
We apply this formula with $j=1$ for several times if $|n-n'|\ge 4$ and then apply that with $j=2$ for several times if $m'>0$ and $2^{m'}\langle n'\rangle^{2}> 2^{m+4}\langle n\rangle^{2}$. As the result, we  get the expression of the form 
\[
K(z',z)= 
\int e^{i\theta'\cdot (z'-w)+i\theta \cdot (A^{-1}(w)-z)}\cdot 
\Psi(w,\theta, \theta') dw d\theta d\theta'
\] 
where the integration with respect to the variables $\theta'$ and $\theta$ are taken over the supports of  $\chi_{n',m'}$ and $\chi'_{n,m}$ respectively.
Using the estimates (\ref{eq:ydiff}) and (\ref{eq:xdiff}), we see, for arbitrarily large $\nu\ge 1$ and for any integers $\alpha,\alpha',\beta,\beta'\ge 0$, that
\[
|\partial^{\alpha}_{\xi}\partial^{\beta}_\eta \partial^{\alpha'}_{\xi'}\partial^{\beta'}_{\eta'}\Psi(w,\theta, \theta')|
\le 
\frac{C_{\nu,\alpha,\beta,\alpha',\beta'}\cdot \Delta_{1}(n,n')^{-\nu}\cdot\Delta_{2}(n,m,n',m')^{-\nu}} 
{\langle n\rangle^{\beta}\cdot \langle n'\rangle^{\beta'}
 \cdot \langle 2^m\langle n\rangle^2 \rangle^{\alpha}\cdot 
\langle 2^{m'}\langle (n')^2\rangle \rangle^{\alpha'} 
}
\]
where the constants $C_{\nu,\alpha,\beta,\alpha',\beta'}$ depend on $A$ and $\varphi$ but not on $n$, $m$, $n'$ nor~$m'$.
This implies that, for arbitrarily large $\nu>0$, we have
\begin{multline}\label{eq:estK}
|K(z',z)|\le C(A,\varphi,\nu)\cdot\Delta_{1}(n,n')^{-\nu}\cdot\Delta_{2}(n,m,n',m')^{-\nu}\\ \cdot \int \rho_{n',m'}^{(\nu)}(z'-w) \cdot \rho_{n,m}^{(\nu)}(A^{-1}(w)-z) dw
\end{multline}
where 
\[
\rho_{n,m}^{(\nu)}(x,y)=
2^m \langle n\rangle^{3}\cdot \langle 2^m \langle n\rangle^{2} |x-x'|\rangle^{-\nu}\cdot
\langle \langle n\rangle |y-y'|\rangle^{-\nu}.
\]
Hence we conclude the estimate (\ref{eq:vnm}) by Young's inequality.  
Note that if we did not apply integration by parts using $\mathcal{D}_2$, we obtain the estimate
\begin{equation}\label{eq:estK2}
|\partial^\alpha_{\xi}\partial^\beta_\eta \partial^{\alpha'}_{\xi'}\partial^{\beta'}_{\eta'}\Psi(w,\theta, \theta')|
\le 
\frac{C'_{\nu,\alpha,\beta,\alpha',\beta'}\cdot \Delta_{1}(n,n')^{-\nu}} 
{\langle n\rangle^{\beta}\cdot \langle n'\rangle^{\beta'}
 \cdot \langle 2^m\langle n\rangle^2 \rangle^{\alpha}\cdot 
\langle 2^{m'}\langle (n')^2\rangle \rangle^{\alpha'} 
}
\end{equation}
where the constants $C'_{\nu,\alpha,\beta,\alpha',\beta'}$ depend on $\nu$ and the constants $K_m$'s in (\ref{eq:py}) but not on $A$, $\varphi$, $n$, $m$, $n'$ nor~$m'$. Hence we obtain (\ref{eq:basic_est}) by a parallel argument. 
\end{proof}
\begin{lem}
\label{lm:pu} Let $U\subset\real^{2}$ be a bounded region. 
Let $\rho_{i}:\real^{2}\to[0,1]$, $1\le i\le I$, be a finite set of  $C^{\infty}$ functions with compact supports such that $\sum_{i=1}^{I}\rho_i (x)\equiv 1$ for $x\in U$. 
Suppose that the functions $\rho_i$ satisfy the condition (\ref{eq:py}) with $\varphi=\rho_i$ for some constants $K_m$ for $m\ge 0$. 
Then there exists a constant $C_0>0$, which depends only on $p$, $r$ and the constants $K_m$, such that, for sufficiently large $k>0$ (depending on the functions $\rho_j$), we have 
\[
\sum_{i=1}^I\|\rho_{i}\cdot(1-\smoothing_{k})u\|_{r,p}^{2p}\le C_{0}\|u\|_{r,p}^{2p}
\]
and 
\[
\|(1-\smoothing_{k})u\|_{r,p}^{2p}\le C_{0} \mu^{2p-1}\cdot\sum_{i=1}^I\|\rho_{i}\cdot u\|_{r,p}^{2p}
\]
for any $u\in \cB^{r,p}(U)$, where $\mu$ is the intersection multiplicity
of the subsets 
\[
X_i:=\{ x\in \real\mid (x,y)\in \supp \rho_{i} \mbox{ for some $y\in \real$}\} \quad\mbox{for $1\le i\le I$}.
\] 
\end{lem}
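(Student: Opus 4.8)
The plan is to reduce both inequalities to the frequency-localized estimates already proved in Proposition~\ref{lm:local1}, treating multiplication by $\rho_i$ as an instance of the local transfer operator $L$ with $A=\mathrm{id}$, $E=1$, $S=0$. Concretely, for each $i$ the operator $u\mapsto \rho_i\cdot u$ is of the form (\ref{eq:L_local}) with $\varphi=\rho_i$ and $A$ the identity, so Proposition~\ref{lm:local1} applies (the hypotheses (\ref{eq:var_g}) and (\ref{eq:py}) are vacuous/assumed), giving, for sufficiently large $k$,
\[
\|\rho_i\cdot(1-\smoothing_k):\cB^{r,p}(U)\to\cB^{r,p}(\real^2)\|\le C_0
\]
with $C_0$ depending only on $p$, $r$ and the $K_m$. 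This already yields $\|\rho_i\cdot(1-\smoothing_k)u\|_{r,p}\le C_0\|u\|_{r,p}$; the first asserted inequality then follows once one controls the \emph{overlap}, i.e. how many of the operators $\hat\chi_{n',m'}\circ(\mult(\rho_i))\circ\hat\chi'_{n,m}$ can be simultaneously non-negligible. The key point, extracted from the kernel estimates (\ref{eq:basic_est})--(\ref{eq:vnm}) in the proof of Proposition~\ref{lm:local1}, is that multiplication by a fixed smooth compactly supported $\rho_i$ only spreads frequencies by a bounded amount in the $m$-direction and a bounded amount in the $n$-direction (this is exactly what $\Delta_1$ and $\Delta_2$ quantify, with $A=\mathrm{id}$), so for fixed $(n',m')$ the sum over $(n,m)$ contributing to $v_{n',m'}$ is effectively finite with a uniform bound on the number of terms.

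For the first inequality I would argue as follows. Writing $v^{(i)}_{n',m'}=\hat\chi_{n',m'}(\rho_i\cdot(1-\smoothing_k)u)$ and expanding $\rho_i\cdot(1-\smoothing_k)u=\sum_{n,m}\rho_i\cdot u_{n,m}$ (with the low-frequency block removed), one applies H\"older in the $(n,m)$-sum exactly as in the proof of (\ref{eq:required}): the kernel estimate with large $\nu$ shows $\|\hat\chi_{n',m'}\circ\mult(\rho_i)\circ\hat\chi'_{n,m}\|_{L^{2p}}$ decays super-polynomially in $\Delta_1(n,n')\cdot\Delta_2(n,m,n',m')$, hence
\[
\sum_{n',m'}(2^{rm'}\|v^{(i)}_{n',m'}\|_{L^{2p}})^{2p}\le C_0\sum_{n,m}(2^{rm}\|u_{n,m}\|_{L^{2p}})^{2p}=C_0\|u\|_{r,p}^{2p},
\]
uniformly in $i$. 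Summing over $i=1,\dots,I$ would a priori cost a factor $I$; this is where the intersection multiplicity $\mu$ of the $X_i$ — and more precisely the \emph{bounded overlap of the supports of $\rho_i$ in the $x$-variable} — must be used. Since $\hat\chi_{n',m'}(\rho_i u_{n,m})$ is essentially supported (in $x$) near $\supp\rho_i$, and at most $\mu$ of the $X_i$ overlap, for each $(n',m')$ the vectors $\{v^{(i)}_{n',m'}\}_i$ have $L^{2p}$-supports of bounded overlap; combined with the frequency-spreading bound this lets one absorb the $i$-sum into the constant $C_0$ without the factor $I$. (The first inequality, as stated, does not even carry a $\mu$-factor, which is consistent: once overlaps are bounded, the relevant constant is absorbed.)

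For the second inequality one runs the same machinery in reverse. Write $(1-\smoothing_k)u=\sum_i\rho_i\cdot(1-\smoothing_k)u$ on $U$ (valid since $\sum_i\rho_i\equiv 1$ on $U\supset\supp u$ and $(1-\smoothing_k)u$ is supported near $U$), localize in frequency, and estimate $\|\hat\chi_{n',m'}(1-\smoothing_k)u\|_{L^{2p}}\le\sum_i\|\hat\chi_{n',m'}(\rho_i(1-\smoothing_k)u)\|_{L^{2p}}$. The number of $i$ for which a given term is non-negligible is $\le\mu$ (bounded overlap of the $X_i$), so by H\"older the $2p$-th power of the sum costs at most $\mu^{2p-1}$ times the sum of $2p$-th powers; summing the frequency-localized pieces using the kernel decay and H\"older in $(n,m)$ again, one gets
\[
\|(1-\smoothing_k)u\|_{r,p}^{2p}\le C_0\,\mu^{2p-1}\sum_{i=1}^{I}\|\rho_i\cdot(1-\smoothing_k)u\|_{r,p}^{2p}\le C_0\,\mu^{2p-1}\sum_{i=1}^{I}\|\rho_i\cdot u\|_{r,p}^{2p},
\]
where in the last step one replaces $(1-\smoothing_k)u$ by $u$ at the cost of an error coming from $\smoothing_k u$ which is a finite combination of the blocks $\hat\chi_{n,m}u$ and hence harmless (it can be folded into the same estimate, or one notes $\rho_i\smoothing_k u$ is controlled by finitely many $\|\rho_i u_{n,m}\|$).

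The main obstacle I expect is the bookkeeping around the two distinct ``overlap'' phenomena that must be combined: (i) the \emph{frequency} overlap — controlling, for fixed $(n',m')$, the number of source blocks $(n,m)$ that contribute, which is handled by the $\Delta_1$, $\Delta_2$ decay from Proposition~\ref{lm:local1} but requires care because the support of $\rho_i$ dilates the admissible $m$-window by a fixed amount depending on $\|\rho_i\|_{C^N}$, hence the need to choose $k$ large depending on the $\rho_j$; and (ii) the \emph{spatial} overlap of the $X_i$, which is what introduces $\mu$ and must be inserted via H\"older at exactly the right place so that the first inequality ends up with no $\mu$ and the second with $\mu^{2p-1}$. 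Getting the exponent of $\mu$ right — and checking it is $2p-1$ and not $2p$ — is the one genuinely delicate point, and it comes precisely from applying H\"older's inequality $\bigl|\sum_{i}a_i\bigr|^{2p}\le(\#\{i\})^{2p-1}\sum_i|a_i|^{2p}$ with $\#\{i\}$ replaced by the local overlap bound $\mu$.
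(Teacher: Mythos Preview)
Your proposal is correct and follows exactly the route the paper itself indicates: specialize Proposition~\ref{lm:local1} to $A=\mathrm{Id}$, $\varphi=\rho_i$, and exploit the spatial localization of the kernel in (\ref{eq:estK})--(\ref{eq:estK2}) together with H\"older to produce the $\mu^{2p-1}$ factor. One small cleanup: for the second inequality it is cleaner to start from $u=\sum_i\rho_i u$ (valid since $\supp u\subset U$) and then apply $(1-\smoothing_k)$, rather than writing $(1-\smoothing_k)u=\sum_i\rho_i(1-\smoothing_k)u$, since $(1-\smoothing_k)u$ is not compactly supported---this avoids the detour through $\|\rho_i(1-\smoothing_k)u\|$ and the commutator error you allude to at the end.
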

\begin{proof}
To get the claims of the lemma, we reconsider the argument in the proof of 
Proposition \ref{lm:local1} in the case $A=\mathrm{Id}$ and $\varphi=\rho_i$, and pay extra attention to the localized property of the kernel of  $\hat{\chi}_{n',m'}\circ L\circ\hat{\chi}'_{n,m}$ given in (\ref{eq:estK}) and (\ref{eq:estK2}). We omit the detail of the proof as it is easy to provide. 
\end{proof}  

\subsection{An $L^{p}$ estimate using transversality }
The next lemma is the key step of the argument in the proof of Theorem  \ref{thm:spectrum}. 
\begin{prop}\label{lm:main}
Let $S(i)$ and $E(i)$, 
$1\le i\le M$, be real numbers such that $|S(i)|\le \gamma_0\theta_0$ and $E(i)\ge \ell$. For a $p$-tuple 
$\bi=(\bi(1),\bi(2),\cdots,\bi(p))\in\{1,2,\cdots,M\}^{p}$,
we define 
\[
S(\bi):=\sum_{k=1}^{p}S(\bi(k)),\quad E(\bi):=\left(\sum_{k=1}^pE(\bi(k))^{-1}\right)^{-1}
\]
and set 
\begin{equation}\label{eq:Delta}
\Delta=\max_{\xi}\sum_{\bi\in\{1,2,\cdots,M\}^{p}}\langle(E(\bi)/\theta_{0})|\xi-S(\bi)|\rangle^{-r}.
\end{equation}
Then there exists a constant $C_0>0$, independent of $S(i)$ and $E(i)$, such that, for sufficiently large $k>0$, we have
\begin{equation}\label{eq:main_claim}
\left\Vert \sum_{i=1}^{M}
(1-\smoothing_{k})u_{i}\right\Vert_{r,p}^{2p}
\le
 C_{0}
\max\left\{\frac{M^{2p-1}}
{{\displaystyle \min_{1\le i\le M}} E(i)^{2pr}},\; M^{p-1}\Delta\right\}\sum_{i=1}^{M}\|u_{i}\|_{r,p,S(i),E(i)}^{2p}
\end{equation}
for any $u_{i}\in \cB_{S(i),E(i)}^{r,p}(\real^2)$.  
\end{prop}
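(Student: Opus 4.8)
The plan is to estimate $\|\sum_i (1-\smoothing_k)u_i\|_{r,p}^{2p}$ by decomposing each $u_i$ into its Littlewood--Paley type pieces $\hat\chi_{n,m,S(i),E(i)}u_i$ adapted to the anisotropic box $\cone(S(i),E(i)^{-1}\theta_0)$, then reassembling the sum in the $\chi_{n',m'}$-pieces of the unadapted norm $\|\cdot\|_{r,p}$, and controlling the cross terms by the same integration-by-parts mechanism used in the proof of Proposition \ref{lm:local1}. Concretely, write $v_{n',m'}=\hat\chi_{n',m'}\bigl(\sum_i(1-\smoothing_k)u_i\bigr)=\sum_i\sum_{n,m}\hat\chi_{n',m'}\circ\hat\chi_{n,m,S(i),E(i)}u_i$, and first observe that for each fixed $i$ the operator $\hat\chi_{n',m'}\circ\hat\chi_{n,m,S(i),E(i)}$ on $L^{2p}$ obeys a rapid-decay bound in the ``distance'' between $\supp\chi_{n',m'}$ and $A_{S(i),E(i)}^\dagger(\supp\chi_{n,m})$, exactly as in \eqref{eq:basic_est}--\eqref{eq:vnm} with $A=A_{S(i),E(i)}$ (this is the linear, hence cleanest, case; the constants here are uniform in $S(i),E(i)$ because $|S(i)|\le\gamma_0\theta_0$ and $E(i)\ge\ell$). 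Because $A_{S(i),E(i)}^\dagger$ maps the cone $\cone_0$ into $\cone(S(i),E(i)^{-1}\theta_0)$, the only $(n',m')$ that receive a non-negligible contribution from a given $i$ are those whose support region meets this thin cone; after the $(1-\smoothing_k)$ cutoff only high frequencies survive.

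The heart of the matter is the $L^{2p}$ reassembly and the appearance of $\Delta$. For a fixed high-frequency index $(n',m')$ with frequency scale $2^{m'}\langle n'\rangle^2=:R$ large, the contributing $i$'s are those for which $\xi' \in$ (a $\theta_0 E(i)^{-1} R$-neighborhood of $S(i)R$) in the angular variable $\xi'/\eta'$; i.e. $\langle (E(i)/\theta_0)\,|\xi-S(i)|\rangle$ is $O(1)$ for $\xi=\xi'/\eta'$. Grouping the $M$ summands into $p$-tuples is forced on us because we must raise to the power $2p$: by Hölder's inequality applied to the $p$-fold structure, $\|\sum_i v_{n',m'}^{(i)}\|_{L^{2p}}^{2p}$ is dominated, after summing over $(n',m')$, by a sum over $p$-tuples $\bi$ of terms in which the combined localization is governed precisely by $\langle(E(\bi)/\theta_0)|\xi-S(\bi)|\rangle^{-r}$ — this is where the definitions of $S(\bi)$, $E(\bi)$ and the quantity $\Delta$ in \eqref{eq:Delta} enter, with the extra $M^{p-1}$ counting the remaining free indices. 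The competing term $M^{2p-1}/\min_i E(i)^{2pr}$ comes from the ``diagonal'' regime where one estimates crudely using the $L^1$-boundedness of the convolution kernels (Lemmas \ref{lem:Fxnm}, \ref{lem:Fxnm2}) together with the $2^{rm}$ weight: each $u_i$ living at frequencies $\gtrsim k$ that is adapted to a cone of angular width $E(i)^{-1}\theta_0$ contributes, when re-expanded in the isotropic pieces, a loss of at most $E(i)^{-r}$ per unit in the weighted norm, and there are $M$ of them with $2p$-th power bookkeeping giving $M^{2p-1}$.

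The main obstacle I expect is the bookkeeping of the two-parameter sum over $(n,m)$ versus $(n',m')$ after the anisotropic changes of variables $A_{S(i),E(i)}$: one must check that the rapid off-diagonal decay in \eqref{eq:vnm}, now with $A$ ranging over the family $\{A_{S(i),E(i)}\}$, still telescopes to an absolutely convergent double sum with constants independent of $i$, and that the $2^{rm}$-weighted Hölder step does not lose powers of $M$ beyond those displayed. The uniformity is delicate precisely because $E(i)$ is unbounded; one handles it by noting that the kernel bound \eqref{eq:estK} is scale-covariant, so the geometric quantities $\Delta_1,\Delta_2$ absorb all the $E(i)$-dependence, and the residual factor is the $E^{1/2p}$ built into the norm $\|\cdot\|_{r,p,S,E}$ of \eqref{eq:rpse}, which is why that normalization was chosen. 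Once this uniform off-diagonal control is in place, the split into the two regimes (near-diagonal giving the $M^{2p-1}/\min E(i)^{2pr}$ term, transversal giving the $M^{p-1}\Delta$ term) and the final summation are routine, paralleling the end of the proof of Proposition \ref{lm:local1}, so I would present those steps tersely.
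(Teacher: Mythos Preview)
Your proposal identifies the right two regimes (the ``off-cone'' part giving $M^{2p-1}/\min_i E(i)^{2pr}$ and the ``transversal'' part giving $M^{p-1}\Delta$) and correctly anticipates that $p$-tuples must appear. However, the mechanism you propose for extracting $\Delta$ --- ``H\"older's inequality applied to the $p$-fold structure'' together with off-diagonal kernel decay as in Proposition~\ref{lm:local1} --- does not work, and this is the heart of the proposition.

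The issue is that almost-orthogonality of the $u_i$ coming from their Fourier supports lying in nearly disjoint thin cones is an $L^2$ phenomenon; there is no direct analogue in $L^{2p}$ for $p>1$, and H\"older only gives the crude $M^{2p-1}$ loss. The paper's proof bypasses this by the identity
\[
\Bigl\|\hat\chi_{n,0}\Bigl(\sum_{i=1}^M u_i\Bigr)\Bigr\|_{L^{2p}}^{2p}
=\Bigl\|\sum_{\bi\in\{1,\dots,M\}^p} u_{\bi}\Bigr\|_{L^2}^2,
\qquad u_{\bi}:=\prod_{k=1}^p \hat\chi_{n,0}(u_{\bi(k)}),
\]
which reduces the $m'=0$ piece to an honest $L^2$ estimate of a sum over $p$-tuples. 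Then, on the Fourier side, a weighted Cauchy--Schwarz with the multipliers
\[
W_{\bi}(\xi,\eta)=\bigl\langle (E(\bi)/\theta_0)\,|\xi/\langle\eta\rangle - S(\bi)|\bigr\rangle^{r/2}
\]
gives
\[
\Bigl\|\sum_{\bi}u_{\bi}\Bigr\|_{L^2}^2
\le\Bigl\|\sum_{\bj}W_{\bj}^{-2}\Bigr\|_\infty\cdot\sum_{\bi}\|W_{\bi}\,\Fourier u_{\bi}\|_{L^2}^2
\le \Delta\cdot\sum_{\bi}\|W_{\bi}\,\Fourier u_{\bi}\|_{L^2}^2,
\]
which is precisely where $\Delta$ enters. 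The remaining work is to bound $\|W_{\bi}\,\Fourier u_{\bi}\|_{L^2}^2$ in terms of the adapted norms $\|u_{\bi(k)}\|_{r,p,S(\bi(k)),E(\bi(k))}$; this uses a further decomposition by $\hat\chi_{\tilde n,\tilde m,S(\bi),E(\bi)}$ and a support/H\"older argument, and is where the extra factor $M^{p-1}$ appears. None of this relies on the integration-by-parts machinery of Proposition~\ref{lm:local1}; the $m'>0$ part and the support relations between $\chi_{n,m}$ and $\chi_{n',m',S(i),E(i)}$ are handled by direct inspection of supports, not oscillatory integrals. Your sketch, as written, would stall at the transversal step because it lacks the $L^{2p}\to L^2$ reduction and the $W_{\bi}$-weighted Schur-type inequality.
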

\begin{proof} Inspecting the supports of the functions ${\chi}_{n,m}$ and ${\chi}_{n',m',S(i),E(i)}$, we find a constant $c_0>0$, independent of $S(i)$ and $E(i)$, such that
\[
 {\chi}_{n,m} \cdot  {\chi}_{n',m',S(i),E(i)}\equiv 0\qquad (\mbox{or } \;\;
 \hat{\chi}_{n,m} *  \hat{\chi}_{n',m',S(i),E(i)}= 0)
\]
if  $|n-n'|\ge 3$ or if $m> 0$ and $m\ge m'-\log E(i)/\log 2+c_0$.
From Lemma~\ref{lem:Fxnm2}, the $L^1$ norm of the functions 
$\hat{\chi}_{n,m}$ and $\hat{\chi}_{S(i), E(i),n',m'}$ are bounded by a constant independent of, $n$, $m$, $n'$, $m'$, $S(i)$ and $E(i)$ and therefore so are the operator norms of the convolution operators with these functions on $L^{2p}(\real^2)$. 
Hence, by using H\"older inequality twice, we obtain that 
\begin{align}
\label{eq:estimate_m_nonzero}
\sum_{n}\sum_{m>0}&
\left(2^{rm}
\left\|\hat{\chi}_{n,m}\left(\sum_{i=1}^{M}u_{i}\right)\right\|_{L^{2p}}\right)^{2p}  
\\
&\le 
M^{2p-1}
\sum_{n}\sum_{m>0}\sum_{i=1}^{M}2^{2prm}\|\hat{\chi}_{n,m}(u_{i})\|_{L^{2p}}^{2p}
\notag \\
&\le 
M^{2p-1}
\sum_{i=1}^{M}\sum_{n}\sum_{m> 0}
\left\|\sum_{n'}\sum_{m'}2^{rm}\hat{\chi}_{n,m}\circ \hat{\chi}_{n',m',S(i),E(i)}(u_{i})\right\|_{L^{2p}}^{2p}
\notag \\
& \le 
 \frac{C_0\cdot M^{2p-1}}{\min_{1\le i\le M} E(i)^{2pr}}\sum_{i=1}^{M}
\sum_{n'}\sum_{m'}
2^{2prm'}\|\hat{\chi}_{n',m',S(i),E(i)}(u_{i})\|_{L^{2p}}^{2p}.
\notag
\end{align}
Notice that we excluded the components with $m=0$ in the estimate above. 
Below we give an estimate on the components with $m=0$, which is more essential.  Note that we may (and will) suppose  that $|n|$ is large, by letting $k$ (in the definition of $\mathcal{K}_k$) be larger if necessary. 

For a $p$-tuple $\bi=(\bi(1),\bi(2),\cdots,\bi(p))\in\{1,2,\cdots,M\}^{p}$, we write 
\[
u_{\bi}=\prod_{k=1}^p\hat{\chi}_{n,0} (u_{\bi(k)})
\]
and estimate the $L^2$ norm of  $\hat{\chi}_{S(\bi),E(\bi),\tilde{n},\tilde{m}}*u_\bi$ for integers $\tilde{n}$ and $\tilde{m}\ge 0$. 
Since the support of $\Fourier u_{\bi}$ is contained in the subset 
\[
 \sum_{k=1}^p \supp \chi_{n,0}:=\left\{\left.\sum_{k=1}^p x_i \;\right|\; x_i\in \supp \chi_{n,0}\right\}\subset \real^2,
\]
we have $\hat{\chi}_{S(\bi),E(\bi),\tilde{n},\tilde{m}}*u_\bi=0$  unless 
\begin{equation}\label{eq:relnn}
||\tilde{n}|^2-p|n|^2|\le  p(2|n|+1)+2|\tilde{n}|+1.
\end{equation}
We henceforth suppose that $\tilde{n}$ satisfies  (\ref{eq:relnn}).
Since we assume $|n|$ is large, this implies that the ratio  $\tilde{n}/n$ is close to $\sqrt{p}$ and we have
$|\tilde{n}-\sqrt{p}n|\le 3(\sqrt{p}+1)$.  

For convenience in the argument below, we introduce the functions
\[
\zeta_{n,m,S,E}(\xi,\eta):={\chi}_{n,m,S,E}(\xi,\eta)\cdot \chi(\theta_0^{-1}\cdot \langle n\rangle^{-2}\cdot \xi)
\]
on $\real^2$, which satisfy 
\[
\sum_{m\ge 0} \zeta_{n,m,S,E}(\xi,\eta)=\rho_n(\eta)\cdot \chi(\theta_0^{-1}\cdot \langle n\rangle^{-2}\cdot \xi)=\chi_{n,0}(\xi,\eta).
\]
We write $\hat{\zeta}_{n,m,S,E}$ for the convolution operator by the function $(2\pi)^{-1}\Fourier^{-1}\zeta_{n,m,S,E}$. Then we have
\begin{equation}\label{eq:zeta}
\sum_{m\ge 0} \hat{\zeta}_{n,m,S,E}(u)=\hat{\chi}_{n,0}(u)
\end{equation}
and also
\[
\|\hat{\zeta}_{n,m,S,E}(u)\|_{L^{2p}}\le C_0 \|\hat{\chi}_{n,m,S,E}(u)\|_{L^{2p}}
\]
for a constant $C_0>0$ which depend only on the choice of the function $\chi(\cdot)$. 

For a sequence $\bm=(\bm(1), \cdots, \bm(p))\in (\integer_{\ge 0})^p$ of non-negative integers, put 
\[
|\bm|=\max_{1\le i\le p} \bm(k).
\]
By inspecting the position of the supports of $\chi_{n,m,S,E}(\cdot)$ in the $\xi$-coordinate, we find a constant $C>0$, which depends only on $p$,  such that, if $|\bm|<\tilde{m}-C$, we have
\[
\supp \hat{\chi}_{\tilde{n},\tilde{m},S(\bi),E(\bi)} \cap \left(\sum_{k=1}^p \supp \hat{\chi}_{n,\bm(k),S(\bi(k)),E(\bi(k))}\right)=\emptyset   
\]
and hence 
\[
\hat{\chi}_{\tilde{n},\tilde{m},S(\bi),E(\bi)} *\left(\prod_{k=1}^{p}\hat{\zeta}_{n,\bm(k),S(\bi(k)),E(\bi(k))}(u_{\bi(k)})\right)=0.
\]
From this and (\ref{eq:zeta}), we have   
\[
\|\hat{\chi}_{\tilde{n},\tilde{m},S(\bi), E(\bi)}(u_{\bi})\|_{L^{2}}^{2}  = \left\|
\sum_{|\bm|\ge \tilde{m}-C}\prod_{k=1}^{p}\hat{\zeta}_{n,\bm(k),S(\bi(k)),E(\bi(k))}(u_{\bi(k)})
\right\|_{L^{2}}^{2}.
\]
By using Schwarz and H\"older inequality, we continue
\begin{align*}
  & \le C_{0}\sum_{|\bm|\ge \tilde{m}-C}
 2^{|\bm|}\left\|\prod_{k=1}^{p}\hat{\zeta}_{n,\bm(k),S(\bi(k)), E(\bi(k))}(u_{\bi(k)})\right\|_{L^{2}}^{2}\\
  & \le C_{0}\sum_{|\bm|\ge \tilde{m}-C}
 2^{|\bm|}\prod_{k=1}^{p}\left\|\hat{\zeta}_{n,\bm(k),S(\bi(k)),E(\bi(k))}(u_{\bi(k)})\right\|_{L^{2p}}^{2}
\end{align*}
and further 
\begin{align*}
 & \le C_{0}2^{-r\tilde{m}}\sum_{\bm}\prod_{k=1}^{p}\left(2^{(r+1)\bm(k)}\|\hat{\zeta}_{n,\bm(k),S(\bi(k)),E(\bi(k))}(u_{\bi(k)})\|_{L^{2p}}^{2}\right)\\
 & \le C_{0}2^{-r\tilde{m}}\prod_{k=1}^{p}\left(\sum_{m=0}^\infty 2^{(r+1)m}\|\hat{\chi}_{n,m,S(\bi(k)),E(\bi(k))}(u_{\bi(k)})\|_{L^{2p}}^{2}\right)
\end{align*}
where (and henceforth in the proof below) $C_0>0$ denotes constants depending only on $p$ and $r$ and its values may be different from place to place. 
We therefore conclude
\begin{align}\label{eq:core_estimate}
&2^{r\tilde{m}}\|\hat{\chi}_{\tilde{n},\tilde{m},S(\bi),E(\bi)}( u_{\bi})\|_{L^{2}}^{2}\\ 
&\qquad\qquad \le C_{0}\prod_{k=1}^{p}\left(\sum_{m=0}^\infty 2^{2rm}\|\hat{\chi}_{n,m,S(\bi(k)),E(\bi(k))}(u_{\bi(k)})\|_{L^{2p}}^{2}\right)\notag
\end{align}
where we used the fact $2r\ge r+1$ that follows from (\ref{eq:choice_r}). 
 
Now we are going to prove the conclusion of the proposition. Recall the quantity $\Delta$ defined in (\ref{eq:Delta}) and write 
\[
W_{\bi}(\xi,\eta)=\big\langle(E(\bi)/\theta_{0})|\xi/\langle\eta\rangle-S(\bi)|\big\rangle^{r/2}.
\]
Then we have
\begin{align*}
\left\|\hat{\chi}_{n,0}\left(\sum_{i=1}^M u_{i}\right)\right\|_{L^{2p}}^{2p} 
&=
\left\|
\sum_{\bi} u_{\bi} \right\|_{L^{2}}^{2}
\le\sum_{\bi,\bj}\|W_{\bj}^{-1}\cdot W_{\bi}\cdot\Fourier u_{\bi}\|_{L^{2}}\cdot \|W_{\bi}^{-1}\cdot W_{\bj}\cdot\Fourier u_{\bj}\|_{L^{2}}\\
& \le\sum_{\bi,\bj}\|W_{\bj}^{-1}\cdot W_{\bi}\cdot\Fourier u_{\bi}\|_{L^{2}}^{2}\\
 & \le\left\|\sum_{\bj}W_{\bj}^{-2}\right\|_{\infty}\cdot \sum_{\bi}\|W_{\bi}\cdot\Fourier u_{\bi}\|_{L^{2}}^{2}\le\Delta\cdot \sum_{\bi}\|W_{\bi}\cdot\Fourier u_{\bi}\|_{L^{2}}^{2}.
\end{align*}
Since $W_{\bi}(\xi,\eta)\le C_0 2^{r\tilde{m}/2}$ on the support of $ \chi_{\tilde{n},\tilde{m},S(\bi(k)),E(\bi(k))}$, we have from  (\ref{eq:core_estimate}) that
\begin{multline*}
\left\| W_\bi \cdot \Fourier u_\bi\right\|_{L^{2}}^{2}\le
C_0 \sum_{\tilde{n}:|\tilde{n}-n|\le 3(\sqrt{p}+1)} \;
\sum_{\tilde{m}=0}^{\infty}2^{r\tilde{m}}\|\hat{\chi}_{\tilde{n},\tilde{m},S(\bi),E(\bi)}(u_{\bi})\|_{L^{2}}^{2}\\
\le C_{0}\sum_{\tilde{n}:|\tilde{n}-n|\le 3(\sqrt{p}+1)}\;\prod_{k=1}^{p}\left(\sum_{m=0}^\infty 2^{2rm}\|\hat{\chi}_{n,m,S(\bi(k)),E(\bi(k))}(u_{\bi(k)})\|_{L^{2p}}^{2}\right).
\end{multline*}
From the last two inequalities, we deduce
\begin{align}\label{eq:chi0}
&\sum_{n}\left\|\hat{\chi}_{n,0}\left(\sum_{i=1}^Mu_{i}\right)\right\|_{L^{2p}}^{2p}\\
&\le 
C_{0}\Delta\cdot  \sum_{n}\sum_{\tilde{n}:|\tilde{n}-n|\le 3(\sqrt{p}+1)}\sum_{\bi}\prod_{k=1}^{p}\left(\sum_{m=0}^\infty 2^{2rm}\|\hat{\chi}_{n,m,S(\bi(k)),E(\bi(k))}(u_{\bi(k)})\|_{L^{2p}}^{2}\right)
\notag\\
&\le 
C_{0}\Delta\cdot  \sum_{n}
\left(\sum_{i=1}^M\sum_{m=0}^\infty 2^{2rm}\|\hat{\chi}_{n,m,S(i),E(i)}(u_{i})\|_{L^{2p}}^{2}\right)^p
\notag\\
& \le
C_{0}\Delta M^{p-1}\cdot \sum_{i=1}^M \sum_{n}\sum_{m=0}^\infty 2^{2prm}\|\hat{\chi}_{n,m,S(i),E(i)}(u_{i})\|_{L^{2p}}^{2p}\quad \mbox{by H\"older inequality.}\notag
\end{align}
Finally note that 
\[
\left\|\sum_{i=1}^{M}(1-\smoothing_{k})u_{i}\right\|_{r,p}^{2p}
\le \sum_{n,m}{}^*
\left(2^{rm}
\left\|\hat{\chi}_{n,m}\left(\sum_{i=1}^{M}u_{i}\right)\right\|_{L^{2p}}\right)^{2p}  
\]
where the sum $\sum^*_{n,m}$ is taken over $n$ and $m\ge 0$ such that$2^m\langle n^2\rangle\ge k$. By (\ref{eq:estimate_m_nonzero}) and (\ref{eq:chi0}), we obtain the conclusion of the proposition. 
\end{proof}
\begin{cor}\label{cor:bdd}
In Proposition \ref{lm:local1}, the operator $L$ is a bounded operator from $\cB^{r,p}(V)$ to $\cB^{r,p}(A(\supp \varphi))$. 
\end{cor}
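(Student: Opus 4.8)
The plan is to deduce this from what Proposition~\ref{lm:local1} already gives, namely that $L$ is bounded from $\cB^{r,p}(V)$ into the \emph{twisted} space $\cB_{S,E}^{r,p}(A(\supp\varphi))$ with $S=g'(0)$ and $E$ the expansion constant in (\ref{eq:A}). Since $Lu=(\varphi\cdot u)\circ A^{-1}$ is automatically supported in the bounded set $A(\supp\varphi)$, it is enough to prove that, for a \emph{fixed} pair $(S,E)$ with $E\ge 1$ and $|S|\le\gamma_0\theta_0$ and a fixed bounded region $K$, the identity map defines a bounded operator $\cB_{S,E}^{r,p}(K)\to\cB^{r,p}(\real^2)$, with norm allowed to depend on $S$, $E$ and $K$; applying this with $K=A(\supp\varphi)$ then finishes the proof.

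To establish this bounded inclusion I would split $v\in\cB_{S,E}^{r,p}(K)$ as $v=\smoothing_k v+(1-\smoothing_k)v$ for a large integer $k$. For the high-frequency piece $(1-\smoothing_k)v$ I would invoke Proposition~\ref{lm:main} in the trivial case $M=1$, with $u_1=v$, $S(1)=S$ and $E(1)=E$ (note $|S|\le\gamma_0\theta_0$ is exactly the standing hypothesis on $g$, and the case $M=1$ uses only $E(1)\ge 1$): there the quantity $\Delta$ of (\ref{eq:Delta}) satisfies $\Delta\le 1$ because $\langle s\rangle\ge 1$ for every $s$, and $\min_i E(i)^{2pr}=E^{2pr}\ge 1$ because $E\ge 1$, so the right-hand side of (\ref{eq:main_claim}) collapses to $C_0\|v\|_{r,p,S,E}^{2p}$ and one gets
\[
\bigl\|(1-\smoothing_k)v\bigr\|_{r,p}^{2p}\le C_0\,\|v\|_{r,p,S,E}^{2p}\qquad\text{for all sufficiently large }k.
\]

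For the low-frequency piece $\smoothing_k v$, whose Fourier transform is supported in a bounded set depending only on $k$, I would argue that $\smoothing_k\colon\cB_{S,E}^{r,p}(K)\to\cB^{r,p}(\real^2)$ is bounded --- in fact trace class --- by rerunning the proof of Lemma~\ref{lem:xnm_trace} with the functions $\chi_{n,m,S,E}$ in place of $\chi_{n,m}$; this is legitimate because by Lemma~\ref{lem:Fxnm2} the $L^1$ norms of $\hat\chi_{n,m,S,E}=(2\pi)^{-1}\Fourier^{-1}\chi_{n,m,S,E}$ are uniformly bounded, exactly as $\hat\chi_{n,m}$ is controlled by Lemma~\ref{lem:Fxnm}. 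Adding the two estimates gives $\|v\|_{r,p}\le C(S,E,K)\,\|v\|_{r,p,S,E}$, which is the claimed bounded inclusion, and hence $L\colon\cB^{r,p}(V)\to\cB^{r,p}(A(\supp\varphi))$ is bounded.

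The only step with genuine content is the specialization $M=1$ of Proposition~\ref{lm:main}; everything else is the same bookkeeping already used to treat the cut-off operators $\smoothing_k$ on the untwisted space $\cB^{r,p}$. I do \emph{not} expect the resulting bound on $\|L\|$ to be uniform in $E$ --- twisting by $A_{S,E}$ shifts the $\xi$-dyadic scales by roughly $\log_2 E$ --- but this is harmless for the qualitative statement of the corollary, since $A$, and hence $E$, is fixed.
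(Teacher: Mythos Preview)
Your argument is correct and follows the same route as the paper. The paper's proof is the single sentence ``It suffices to show that the inclusion $\iota:\cB^{r,p}_{S,E}(A(\supp\varphi))\to\cB^{r,p}(A(\supp\varphi))$ is bounded; this follows from Proposition~\ref{lm:main} applied to the trivial case $M=1$.'' You have unpacked exactly this, and in addition you make explicit the treatment of the low-frequency remainder $\smoothing_k v$, which the paper leaves to the reader. One small inaccuracy: Proposition~\ref{lm:main} is stated under the hypothesis $E(i)\ge\ell$, not merely $E(i)\ge1$; in the intended applications (and in the corollary as used later) one always has $E\ge\ell$, so this does not affect the argument.
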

\begin{proof}It suffices to show that the inclusion $\iota:\cB^{r,p}_{S,E}(A(\supp \varphi))\to \cB^{r,p}(A(\supp \varphi))$ is bounded. This follows from Proposition \ref{lm:main} applied to the trivial case $M=1$. 
\end{proof}
\section{Proof of Theorem \ref{thm:spectrum}}\label{sec:pf1}
Below we set up a system of local charts on $X_f$ so that the flow $T^t_f$ looks smooth in each of them and then introduce the Banach space $\cB^{r,p}(X_f)$ using such local charts and the Banach space $\cB^{r,p}(\real^2)$.  Once we have done with these, the proof of Theorem \ref{thm:spectrum} is not very difficult and  obtained basically by applying the propositions (especially Proposition \ref{lm:main}) to the transfer operators induced on the local charts. Unfortunately a slight combinatorial  complication is caused by the fact that we admit the "exceptional set" $\mathcal{E}$ in the definition of $\mathcal{G}(J,n,\varepsilon,\delta;p)$. In order to present the idea of the proof clearly, we first prove the conclusion of the theorem assuming a stronger condition where $\mathcal{E}=\emptyset$ in Subsection \ref{ss:pre} and then explain how we modify the argument to obtain the theorem in Subsection \ref{ss:pf_Thspec}.

\subsection{System of local charts on $X_{f}$ and the definition of $\cB^{r,p}(X_{f})$}
\label{ss:defcB}
To begin with, we take two small real numbers $\eta_{0}>0$ and $\delta_{0}>0$
and consider the open rectangle 
\[
R=(-\eta_{0},\eta_{0})\times(4\delta_{0},7\delta_{0})\subset Q=(-3\eta_{0},3\eta_{0})\times(0,11\delta_{0}).
\]
For each $a=(x_{0},y_{0})\in X_{f}$, we consider the two mappings
\begin{align*}
&\tilde{\kappa}_{a}:Q\to S^{1}\times\real,\quad\tilde{\kappa}_{a}(x,y)=(x_{0}+x,y_{0}+y).
\intertext{and}
&\kappa_{a}:=\pi\circ \tilde{\kappa}_a:Q\to X_{f}
\end{align*}
where
\begin{equation}\label{eq:proj}
\pi:S^{1}\times\real_{+}\to X_{f},\quad\pi(x,y)=(\tau^{n(x,y;f)}(x),\,y-f^{(n(x,y;f))})
\end{equation}
and $\real_+=\{s\in \real\mid s\ge 0\}$. 
(See Figure \ref{fig:kappa}.) 
We suppose that $\eta_{0}$ and $\delta_{0}$ are so small that both of $\kappa_{a}$ and $\tilde{\kappa}_{a}$ are injective for any $a\in X_{f}$.

\begin{figure}[htbp]
\begin{center}
\begin{overpic}[scale=0.4]{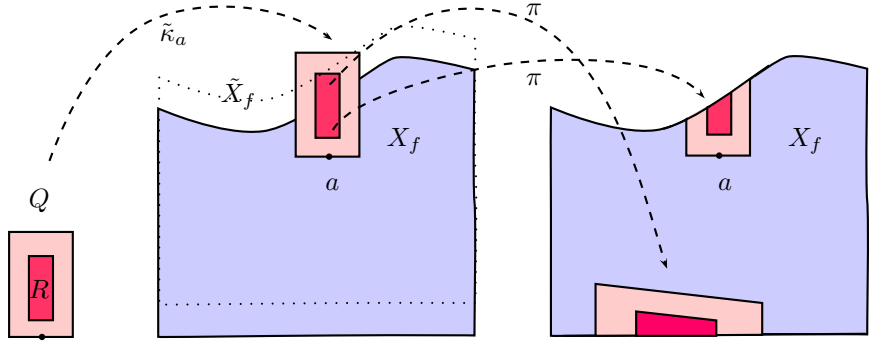}
\put(3,16){$Q$}
\put(3,5.5){$R$}
\put(44,23){$X_f$}
\put(90,23){$X_f$}
\put(60,38){$\pi$}
\put(60,30){$\pi$}
\put(25,28){$\tilde{X}_f$}
\put(18,35){$\tilde{\kappa}_a$}
\put(37,18){$a$}
\put(82,18){$a$}
\end{overpic}
\end{center}
\caption{The mappings $\tilde{\kappa}_a$, $\pi$ and $\kappa_a$.}
\label{fig:kappa}
\end{figure}

Next we take a finite subset $A$ of $X_{f}$ so that the images $\tilde{\kappa}_{a}(R)$
for $a\in A$ cover the subset 
\[
\tilde{X}_{f}:=\{(x,y)\in S^{1}\times\real_+\mid 5\delta_{0}\le y\le f(x)+6\delta_{0}\}.
\]
Letting $\delta_0$ and the ratio $\eta_{0}/\delta_{0}$ be small, we may  and do assume that
the intersection multiplicity of $\{\tilde{\kappa}_{a}(R)\}_{a\in A}$
is bounded by an absolute constant (say, by $4$). 

We define the Banach space $\cB^{r,p}(X_{f})$ as follows. 
We suppose that the product space $\bigoplus_{a\in A}\cB^{r,p}(R)$ is a Banach space  with the norm  
\[
\|\mathbf{u}\|_{r,p}=\left(\sum_{a\in A}\|u_{a}\|_{r,p}^{2p}\right)^{1/2p}\quad \mbox{for $\mathbf{u}=(u_{a})_{a\in A}\in\bigoplus_{a\in A}\cB^{r,p}(R)$.}
\]
Then the operator   
\begin{equation}\label{eq:Pi_extended}
\Pi:\bigoplus_{a\in A}\cB^{r,p}(R)\to L^2(X_{f}),\quad\Pi((\varphi_{a})_{a\in A})=\sum_{a\in A}\varphi_{a}\circ\kappa_{a}^{-1}
\end{equation}
is bounded because $\cB^{r,p}(R)\subset \cB^{r,2}(R)\subset L^2(R)$.
\begin{defn}
Let $\cB^{r,p}(X_{f})\subset L^2(X_f)$ be the image of  (\ref{eq:Pi_extended}). This is a Banach space with respect to the norm
 \[
\|u\|_{\cB^{r,p}}=\inf\left\{\|\mathbf{u}\|_{r,p}\;\left|\; u=\Pi(\mathbf{u}),\mathbf{u}\in\bigoplus_{a\in A}\cB^{r,p}(R)\right.\right\}.
\]
The operator $\Pi$ in (\ref{eq:Pi_extended}) is  then restricted to a bounded operator 
\[
\Pi:\bigoplus_{a\in A}\cB^{r,p}(R)\to \cB^{r,p}(X_{f})
\] 
with operator norm $1$. 
\end{defn}
We next define a bounded operator $\mathbf{I}:\cB^{r,p}(X_{f})\to \bigoplus_{a\in A}\cB^{r,p}(R)$ which makes the following diagram with $t=6\delta_0$ commutes:
\begin{equation}\label{cd:I}
\begin{tikzcd}
 &\bigoplus_{a\in A}\cB^{r,p}(R) \arrow{d}{\Pi}\\
 \cB^{r,p}(X_{f}) \arrow{ru}{\mathbf{I}} \arrow{r}{\cL^{t}} &\cB^{r,p}(X_{f})
\end{tikzcd}
\end{equation}
\begin{rem}
It would be  preferable if we let $t=0$ and defined the operator $\mathbf{I}$ as the left inverse of $\Pi$. This may be possible but will not be easy. 
\end{rem}
Let  $\beta:S^{1}\times\real\to[0,1]$ be a smooth function defined 
by 
\[
\beta(x,y)=\begin{cases}
\chi(\delta_{0}^{-1}(y-f(x)-5\delta_{0})+1), & \mbox{ if $ f(x)+5\delta_0\le y$;}\\
1, & \mbox{ if $6\delta_{0}<y<f(x)+5\delta_0$;}\\
1-\chi(\delta_{0}^{-1}(y-5\delta_{0})+1), & \mbox{ if $y\le 6\delta_{0}$}
\end{cases}
\]
where $\chi$ is the function defined in (\ref{eq:chi}).
This function is taken so that it satisfies
\[
\beta(x,y)=\begin{cases}
0,&\mbox{on the outside of $\tilde{X}_{f}$;}\\
1,&\mbox{when $6\delta_0\le y\le f(x)+5\delta_0$}
\end{cases}
\]
and also
\[
\beta(x,f(x)+y)+\beta(x',y)=1\quad \mbox{for any $x,x'\in S^1$ and $0\le y\le 6\delta_0$.}
\]
We then take $C^{\infty}$ functions  $h_{a}:\real^{2}\to[0,1]$ 
 supported on $R$ for $a\in A$ so that\footnote{Here and henceforth, we suppose that ${h}_{a}\circ \tilde \kappa_a^{-1}$ is a $C^\infty$ function on $S^1\times \real$ which takes value $0$ on the outside of $\tilde{\kappa}_a(Q)$. } 
\[
\sum_{a}{h}_{a}\circ \tilde \kappa_a^{-1}\equiv\beta
\quad \mbox{on $S^{1}\times\real$.}
\]
For each $u\in C^\infty(X_f)$, we set
\[
\tilde{u}:\tilde{X}_f\to \complex, 
\quad \tilde{u}(x,y)=\begin{cases}
(\cL^{6\delta_0} u)(x,y),&\quad \mbox{ if $y\le 6\delta_0$;}\\
u(x,y-6\delta_0),&\quad \mbox{ if $y\ge 6\delta_0$.} 
\end{cases}
\]
Since $(\cL^{6\delta_0} u)(x,y)=u(x,y-6\delta_0)$
when  $6\delta_0\le y\le f(x)$, this is a smooth function on $\tilde{X}_f$. 
We define the operator $\mathbf{I}$ by   
\begin{equation}\label{eq:bI}
\mathbf{I}(u)=(u_a)_{a\in A}, \quad u_a= h_a\cdot (\tilde{u} \circ \tilde{\kappa}_{a}) \quad\mbox{for $u\in C^\infty(X_f)$}. 
\end{equation}
This operator extends to a bounded operator 
$\mathbf{I}:\cB^{r,p}(X_f)\to \bigoplus_{a\in A}\cB^{r,p}(R)$, as we will see in the next paragraph, and makes the diagram (\ref{cd:I}) commutes.

Next we introduce the operator  
\begin{equation}\label{eq:bLt}
\bL^{t}:=\mathbf{I}\circ \cL^{t-6\delta_0}\circ \Pi:\bigoplus_{a\in A}\cB^{r,p}(R)\to\bigoplus_{a\in A}\cB^{r,p}(R)
\end{equation}
for $t\ge 6\delta_0$. By applying Corollary \ref{cor:bdd} to each component, we see that this is a bounded operator.
(See Remark \ref{lem:boundedness} for more detail.)
Since $\bL^{6\delta_0}=\mathbf{I}\circ \Pi$ is bounded in particular, so is $\mathbf{I}:\cB^{r,p}(X_f)\to \bigoplus_{a\in A}\cB^{r,p}(R)$ from the definition of $\cB^{r,p}(X_f)$.
From (\ref{cd:I}), the diagrams  
\begin{equation}
\begin{CD}\bigoplus_{a\in A}\cB^{r,p}(R)@>{\bL^{t}}>>\bigoplus_{a\in A}\cB^{r,p}(R)\\
@V{\Pi}VV@V{\Pi}VV\\
\cB^{r,p}(X_{f})@>{\cL^{t}}>>\cB^{r,p}(X_{f})
\end{CD}\label{cd:lift}
\end{equation}
and 
\begin{equation}
\begin{CD}\bigoplus_{a\in A}\cB^{r,p}(R)@>{\bL^{t}}>>\bigoplus_{a\in A}\cB^{r,p}(R)\\
@A{\mathbf{I}}AA@A{\mathbf{I}}AA\\
\cB^{r,p}(X_{f})@>{\cL^{t}}>>\cB^{r,p}(X_{f})
\end{CD}\label{cd:lift2}
\end{equation}
commute for $t\ge 6\delta_0$. In particular, the operator $\cL^t:\cB^{r,p}(X_{f})\to \cB^{r,p}(X_{f})$ is  bounded provided $t\ge 6\delta_0$.
It is not difficult to check that the operators
\[
\cL^t:\cB^{r,p}(X_{f})\to \cB^{r,p}(X_{f})\quad\mbox{and}\quad
\bL^{t}:\bigoplus_{a\in A}\cB^{r,p}(R)\to \bigoplus_{a\in A}\cB^{r,p}(R),
\] 
have the same essential spectral radus and their peripheral eigenvalues on the outside of it coincide up to multiplicity. 
  
The operator $\bL^{t}$ for $t\ge 6\delta_0$ is expressed as a matrix of operators 
\begin{equation}
\bL^{t}(u_{a})_{a\in A}=\left(\sum_{a\in A}\cL_{a\to b}^{t}u_{a}\right)_{b\in A}.\label{eq:bLt_exp}
\end{equation}
Each component $
\cL_{a\to b}^{t}:\cB^{r,p}(R)\to \cB^{r,p}(R)$
is written in the form (\ref{eq:L_local}), {\it i.e.} $\cL_{a \to b}^t u=(\varphi \cdot u)\circ A$ with  
\begin{equation}\label{eq:Adef}
A=A^t_{a\to b}:R^t_{a\to b}\to \real^2 \quad \mbox{and}\quad \varphi=\varphi^{t}_{a\to b}(x,y):=h_b\circ A^t_{a\to b}(x,y)
\end{equation}
where 
\begin{align}
&A^t_{a\to b}(x,y)=
\kappa_b^{-1}\circ T^t_f\circ \kappa_a(x,y)\label{def:Aab}
\intertext{
and }
&R^t_{a\to b}=\{z\in R\mid 
T^t_f\circ \kappa_a(z)\in \kappa_b(R) \}.\label{def:Rab}
\end{align} 
\begin{rem}
The mapping $A^t_{a\to b}$ is defined only on a relatively small open subset $R^t_{a\to b}$ in $R$, which will be fragmentary in the direction transversal to the flow when $t$ is large. It is locally written in the form (\ref{eq:A}) with $E\ge 1$ and with $g$ a $C^\infty$ function satisfying $|g'(x)|\le \gamma_0\theta_0$. Though the function $\varphi^t_{a\to b}$ is defined only on $R^t_{a\to b}$, we may extend it to a $C^\infty$ function on $\real^2$ with support contained in $Q$, by letting $\tilde{h}:\real^2\to [0,1]$ be a $C^{\infty}$ function such that
\[
\tilde{h}(z)=\begin{cases} 1,&\quad\mbox{on $R$;}\\
0,&\quad\mbox{on the outside of $Q$}, 
\end{cases}
\]
and 
setting
\[
\varphi_{a\to b}^t(z)=\begin{cases}
\tilde{h}(z)\cdot  h_b\circ A^t_{a\to b}(z),&\quad 
\mbox{if $z\in Q$ and $T^t_f\circ \kappa_a(z)\in \kappa_b(R)$;}\\
0,&\quad \mbox{otherwise.}
\end{cases}
\]
In particular, 
  $\cL_{a\to b}^{t}$ is smooth on $R$ in the sense that  $\cL^t_{a\to b}(C^\infty_0(R))\subset C^\infty_0(R)$. 
\end{rem}

\subsection{Essential operator norm}
We introduce the notion of essential operator norm of a bounded operator. 
This notion is particularly convenient in our argument about the essential spectral radius. 
For a bounded operator $L:B\to B'$ between Banach spaces $B$ and $B'$, its essential operator norm, denoted by $\|L:B\to B'\|_{\ess}$, is the infimum of the operator norms of its perturbations by compact operators: 
\[
\|L:B\to B'\|_{\ess}:=\inf\{\|L-K:B\to B'\|\mid K:B\to B' \mbox{ is compact}\}.
\]
Obviously this is bounded by the operator norm $\|L:B\to B'\|$.
Since composition of a compact operator with a bounded operator is again compact, we have
\[
\|L'\circ L:B\to B''\|_{\ess}\le \|L':B'\to B''\|_{\ess}\cdot \|L:B\to B'\|_{\ess}.
\]
The essential spectral radius of $L:B\to B$ is bounded by its essential norm: 
\[
\rho_{\ess}(L|_{B})\le \|L^n|_{B}\|_{\ess}^{1/n}\le \|L|_{B}\|_{\ess}.
\]

Theorem \ref{thm:spectrum} will follow from the claim that, if $\varepsilon>0$ and if $f$ is sufficiently close to $f_0\in \mathcal{G}$, there exists some $t_*\ge 6\delta_0$ such that 
\begin{equation}\label{eq:essnorm}
\left\|\bL^{t_*}|_{\bigoplus_{a\in A}\cB^{r,p,q}(R)}\right\|_{\ess}\le \exp((\mu(f)+\varepsilon) t_*).
\end{equation}
Indeed, from Corollary \ref{cor:bdd}, we have,  for some $C>0$, that
\begin{equation}\label{eq:norm}
\left\|\bL^{t}|_{\bigoplus_{a\in A}\cB^{r,p,q}(R)}\right\|\le C\quad 
\mbox{for $6\delta_0\le t\le t_*+6\delta_0$}.
\end{equation}
(See Remark \ref{lem:boundedness} for more detail.) Since 
\begin{multline*}
\rho_{\ess}(\cL^t|_{\cB^{r,p}(X_f)})\le \|\cL^{nt}|_{\cB^{r,p}(X_f)}\|^{1/n}_{\ess}
= \|\Pi\circ \bL^{t-6\delta_0}\circ \mathbf{I}\|_{\ess}^{1/n}\\
\le   \|\Pi\|^{1/n}\cdot 
\|\bL^{t_*}\|_{\ess}^{\lfloor(nt-6\delta_0)/t_*\rfloor/n} 
\cdot \|\bL^{nt-\lfloor(nt-6\delta_0)/t_*\rfloor\cdot t_*}\|^{1/n}\cdot\|\mathbf{I}\|^{1/n},
\end{multline*}
we obtain the conclusion of Theorem \ref{thm:spectrum} by letting $n\to \infty$. In the following subsections, we  prove the claim (\ref{eq:essnorm}). 

\subsection{Reduction of the claim}\label{ss:reduce}
Below we show that the claim (\ref{eq:essnorm}) follows from the corresponding estimates on some localized transfer operators on local charts, to which we can apply Proposition \ref{lm:local1} and \ref{lm:main}.  We proceed in a few steps. 
First note that the claim (\ref{eq:essnorm}) follows if we show that
\begin{equation}\label{eq:reducedClaim1}
\|\cL_{a\to b}^t:{\cB}^{r,p}(R)\to {\cB}^{r,p}(R)\|_{\ess}\le C_0\exp((\mu(f)+\varepsilon)t)
\end{equation}
for sufficiently large $t>0$ and for all $a,b\in A$, with $C_0$ a constant independent of~$t$. 
To proceed, we take a finite family of $C^\infty$ functions 
\[
\{\rho_j^t:\real^2\to [0,1]\}_{j=1}^{J(t)}
\]
for each $t>0$, such that  $\sum_{j=1}^{J(t)} \rho_j^t\equiv 1$ on $R$ and that $\supp \rho_j^t\subset Q$. We assume that 
\begin{itemize}
\item $\rho_{j}^t$ satisfies  (\ref{eq:py}) with some constants $K_m>0$ uniform in $j$ and $t$,
\item the support $\rho_j^t$ is contained in a region of the form 
$I_{j}^t\times \real$ 
where $I_j^t$ is a closed interval on $\real$,  and 
\item the intersection multiplicity of $I_j^t$, $1\le j\le J(t)$, is bounded by $2$ (say). 
\end{itemize}
\begin{rem}
In the following subsections, we will assume that the length of the interval $I_j^t$ is very small when $t$ is large. It is important that the constants denoted by $C_0$ below do not depend on the choice of the functions $\rho_j^t$ (though they may depend on the constants $K_m$).
\end{rem}

Let us write the operator $\cL^t_{a\to b}$ as 
\begin{equation}\label{eq:rhoL}
\cL^t_{a\to b}=\sum_{j=1}^{J(t)} \mult(\rho_j^t)\circ \cL_{a\to b}^t
\end{equation}
where $\mult(\rho_j^t)$ denotes the multiplication operator by $\rho_j^t$.
By the second claim of Lemma \ref{lm:pu}, we see that the inequality (\ref{eq:reducedClaim1})  follows if we prove
\begin{equation}\label{eq:reduced_claim2}
\|\mult(\rho_j^t)\circ \cL_{a\to b}^t:{\cB}^{r,p}(R^t_{a\to b})\to {\cB}^{r,p}(R)\|_{\ess}\le C_0 \exp((\mu(f)+\varepsilon)t)
\end{equation}
for all $1\le j\le J(t)$, $a,b\in A$ and for sufficiently large $t>0$, with a constant $C_0$ independent of $t$ and $j$. 

We decompose the operator $\mult(\rho_j^t)\circ \cL_{a\to b}^t$ in (\ref{eq:reduced_claim2}) further. 
For $w\in (T_f^{t})^{-1}(b)$,  there is
a unique open neighborhood $U^{t}_{b,w}$ of the point $w+(0,6\delta_0)$ in $S^1\times \real_+$
 that is mapped bijectively onto $\kappa_b(R)$ by $T_{f}^{t}\circ \pi$. (Recall (\ref{eq:proj}) for the definition of $\pi$.)
We define
\[
R^t_{a\to b, w}:=\kappa_a^{-1}(\pi(U^{t}_{b,w}))\cap R \subset R^t_{a\to b}\subset \real^2.
\]
Then $R^t_{a\to b}$ is the disjoint union of $R^t_{a\to b,w}$ for $w\in (T_f^{t})^{-1}(b)$ though some of $R^t_{a\to b,w}$ will be empty. 
Correspondingly we define 
\begin{align*}
&A_{a\to b,w}^{t}=A_{a\to b}^{t}|_{R^t_{a\to b, w}}:R^t_{a\to b, w}\to R
\intertext{and, for $1\le j\le J(t)$,}
&\rho^t_{a\to b,w,j}:R^t_{a\to b,w}\to [0,1], \quad 
\rho^t_{a\to b,w,j}(z)=(h_b\cdot \rho_j^t)\circ A_{a\to b,w}.
\end{align*}  
Then the operator $\mult(\rho_j^t)\circ \cL_{a\to b}^t$ is written as the sum
\[
\mult(\rho_j^t)\circ \cL^{t}_{a,b}=
\sum_{w\in (T_f^{t})^{-1}(b)} \cL^{t}_{a\to b,w,j}:C^{\infty}_0(R)\to C^\infty_0(R)
\]
where $\cL^{t}_{a\to b,w,j}=0$ if $R^t_{a\to b,w}=\emptyset$ and otherwise
\[
\cL^{t}_{a\to b,w,j}u=(\rho^t_{a\to b,w,j}\cdot u)\circ (A_{a\to b,w}^{t})^{-1}.
\]
\begin{rem}Notice that the functions $\rho^t_{a\to b,w,j}$ satisfy the condition (\ref{eq:py}) with some constants $K_m>0$ uniform for $a$, $b$, $w$, $j$ and $t$. 
\end{rem}
By the first claim of Lemma \ref{lm:pu}, the claim (\ref{eq:reducedClaim1}) follows if  
we show that
\begin{equation}\label{eq:reduced_claim3}
\left\|\sum_{w\in (T_f^{t})^{-1}(b)}\cL^{t}_{a\to b,w,j}:{\cB}^{r,p}(R)\to {\cB}^{r,p}(R)\right\|_{\ess}\le C_0 \exp((\mu(f) +\varepsilon)t)
\end{equation}
for sufficiently large $t$ and for all $a,b\in A$ and $1\le j\le J(t)$, with a constant $C_0$ independent of $t$, $a$, $b$ and $j$. 

\begin{rem}\label{lem:boundedness}
Letting the lengths  of the  intervals $I^t_j$ in the definition of $\rho^t_j$ be small, we may apply Corollary \ref{cor:bdd} to each component $\cL^{t}_{a\to b,w,j}:{\cB}^{r,p}(R)\to {\cB}^{r,p}(R)$ and see that they are bounded. Consequently the operator $\bL^t$ in  (\ref{eq:bLt_exp}) is bounded.
Further, since we may take the bound on the operator norm of $\bL^t$ locally uniformly in $t$, we obtain (\ref{eq:norm}). 
\end{rem}

\subsection{A preliminary argument for the Proof of Theorem \ref{thm:spectrum}}\label{ss:pre}
As we noted in the beginning of this section, in order to illustrate the main point of the argument clearly, we first prove the conclusion of Theorem \ref{thm:spectrum} under a stronger assumption. For $n\ge 1$ and $\varepsilon>0$, we define $\mathcal{G}'(J_\nu,n,\varepsilon;p)$ as the set of $f\in \Func(y_{\min},y_{\max},\kappa_0)$ such that, for sufficiently large $t>0$ and for any $z=(x,y)\in X_f$, the condition (\ref{eq:mult2}) holds for any $\xi\in [-\theta_0,\theta_0]$ with $\excep=\emptyset$ in the summation. We assume that the roof function $f$ belongs to the set 
\[
\mathcal{G}'=\bigcap_{\nu= 1}^{\nu_0}\bigcap_{m=1}^\infty  
\bigcap_{n\ge 1}\mathcal{G}'(J_{\nu},n, 1/m;p)\subset \Func(y_{\min},y_{\max},\kappa_{0}).
\]
\begin{rem}
From the discussion preceding to Theorem \ref{thm:spectrum}, we  expect that the subset $\mathcal{G}'$ above is also prevalent in $\Func(y_{\min},y_{\max},\kappa_0)$.
The proof of Theorem \ref{thm:spectrum} would be simpler if this was true, as we will see below. But some technical difficulties (related to interference between perturbations) prevent us from this. We therefore resort to a more involved argument presented in the next subsection.  
\end{rem}

We continue the argument in the last subsection  under the assumption as above. We assume that the lengths of the intervals $I_j^t$ in the choice of the functions $\rho^t_j$ are very small. (The precise  
 condition will be given in  Remark \ref{rem:widthI}.) 
Let us take and fix a point\footnote{We will ignore $j$'s such that $\supp \rho_j^t \cap R=\emptyset$.} $z_0=z_0(j)\in  \supp \rho_j^t \cap R$. 
For each $w\in (T_f^{t})^{-1}(b)$ with $R^t_{a\to b,w}\neq \emptyset$,  
let $q=q(w)\in Q$ be the unique point satisfying $\kappa_a(q(w))\in U^t_{b,w}$ and 
$T^t_f(\kappa_a(q(w)))=\kappa_b(z_0)$. 
Then let $S(w)$ and $E(w)\ge 1$ be real numbers such that  
\begin{equation}\label{eq:SEdef}
(DA_{a\to b,w}^t)_{q(w)}=\begin{pmatrix}E(w) &0\\
-S(w)E(w) &1
\end{pmatrix}.
\end{equation}
We divide the set $(T_{f}^{t})^{-1}(b)$ into disjoint subsets $\back_\nu$, $1\le \nu\le \nu_0$, so that 
$w\in (T_{f}^{t})^{-1}(b)$ is contained in $\back_\nu$ only if $E(w)\in [e^{a_\nu t}, e^{b_\nu t}]$.
  Further, letting $t$ be sufficiently large, we may and do suppose that $\back_{\nu}=\emptyset$ if 
\begin{equation}\label{eq:JnuEmpty}
[\chi_{\min}(f), \chi_{\max}(f)]\cap \mathrm{int} J_\nu =\emptyset. 
\end{equation}
Then the operator on the left hand side of (\ref{eq:reduced_claim3}) is expressed as
\[
\sum_{w\in (T_f^{t})^{-1}(b)}\cL^{t}_{a\to b,w,j}=\sum_{\nu=1}^{\nu_0}\Psi_\nu\circ \Phi_\nu
\]
where
\[
\Phi_\nu:\cB^{r,p}(R)\to \bigoplus_{w\in \back_\nu} \cB^{r,p}_{S(w),E(w)}(R),\quad
\Phi_\nu(u)=(\cL^t_{a\to b,w,j}u)_{w\in \back_\nu} 
\]
and
\[
\Psi_\nu:\bigoplus_{w\in  \back_\nu} \cB^{r,p}_{S(w),E(w)}(R)\to \cB^{r,p}(R),\quad
\Psi_\nu((u_w)_{w\in \back_\nu})=\sum_{w\in \back_\nu} u_w. 
\]
Here we suppose that $\bigoplus_{w\in  \back_\nu} \cB^{r,p}_{S(w),E(w)}(R)$ is equipped with the norm
\[
\|(u_w)\|:=\left(\sum_{w\in \back_\nu} \|u_w\|_{r,p,S(w),E(w)}^{2p}\right)^{1/2p}
\]
Then, from Lemma \ref{lm:pu} and Proposition \ref{lm:local1}, the essential operator norm of $\Phi_\nu$ is bounded by
\begin{equation}\label{eq:nonlinear}
C_0 \max_{w\in \back_\nu} E(w)^{1/2p}\le C_0 \exp(b_\nu t/(2p)).
\end{equation}
\begin{rem}\label{rem:widthI}
To get the estimate (\ref{eq:nonlinear}),  we apply Proposition \ref{lm:local1} to each $\cL^{t}_{a\to b,w,j}$. For this purpose, we have to assume that the lengths of intervals $I_j^t$ in the choice of the functions $\rho^t_j$ are sufficiently small. 
This is of course possible. The point is that the constant denoted by $C_0$ in (\ref{eq:nonlinear}) does not depend on the choice of  $\rho_j^t$.
\end{rem}
From Proposition \ref{lm:main} and (\ref{eq:card_backward_orbit}), the essential operator norm of $\Psi_\nu$ is bounded by
\[
 C_0 \exp( (h(f)+\varepsilon)t)(p-1)/2p)\cdot \Delta_\nu^{1/2p}
\]
where $\Delta_\nu$ is the quantity defined in Proposition \ref{lm:main} in the setting 
\[
\{(S(i),E(i))\mid i=1,\cdots, M:=\# \back_\nu\}=\{ (S(w),E(w))\mid w\in \back_\nu\}.
\]
\begin{rem}
To deduce the estimate above, we used  (\ref{eq:card_backward_orbit}) to bound $\# \back_\nu$. Note also that, from the condition (\ref{eq:choice_r}) in the choice of $r$, the latter factor $M^{p-1}\Delta \ge M^{p-1}$ on the right hand side of the inequality (\ref{eq:main_claim}) of Proposition~\ref{lm:main} exceeds the former factor
$M^{2p-1}/(\min_{1\le i\le M} E(i))^{2pr} \lesssim \exp(-2pr\cdot \chi_{\min} t)M^{2p-1}$. 
\end{rem}
Since we are assuming that $f\in \mathcal{G}'$, we have that 
\[
\Delta_\nu \le \exp((\max\{p  h(f)-a_\nu,0\}+p(b_\nu-a_\nu)+\varepsilon)t)
\] 
for sufficiently large $t$, uniformly in $a,b\in A$ and $1\le j\le J(t)$. 
Therefore we conclude that the essential operator norm of $\Psi_\nu\circ \Phi_\nu$ is bounded by
\[
\exp\!\left(\!\frac{b_\nu +(p-1)(h(f)+\varepsilon)+\max\{p h(f)-a_\nu,0\}+p(b_\nu-a_\nu)+2\varepsilon}{2p}\cdot t\!\right)
\]
provided that  $t$ is sufficiently large. 
By the definition of $\mu(f)$ in (\ref{eq:muf}) and arbitrariness of  $\varepsilon>0$, this implies (\ref{eq:reduced_claim3}).  
(Note that $\Psi_\nu\circ \Phi_\nu=0$ if (\ref{eq:JnuEmpty}) holds.)

\subsection{Proof of Theorem \ref{thm:spectrum}}\label{ss:pf_Thspec}
We explain how we modify the argument in the last subsection in order to get the same conclusion under the weaker assumption of Theorem \ref{thm:spectrum}. The idea is not difficult: we use the fact that the exceptional set $\mathcal{E}$ is relatively small as we formulate in (\ref{eq:cardSigma}) below.

We resume the argument in Subsection \ref{ss:reduce}.
Recall that we are considering an arbitrarily small number $\varepsilon>0$. 
Let $m$ and $m'$ be large integers that we will specify in the course of the argument.  We take $n\ge n_0(1/m)$ so large that 
\begin{equation}\label{eq:condn}
\nu_0 p\cdot \lceil 10 m \bar{\chi}_{\max}\rceil \le  \exp(\varepsilon n). 
\end{equation}
In the following we assume that $f$ belongs to 
\[
\bigcap_{\nu=1}^{\nu_0} \mathcal{G}(J_\nu,n,1/m,1/m';p).
\]

We take $t_0>0$ so that the conditions in the definitions of 
$\mathcal{G}(J_\nu,n,1/m,1/m';p)$ for $\nu=1,\cdots, \nu_0$ hold for $t\ge t_0$. 
From the definition of $\mathcal{G}(\cdot)$ in Theorem \ref{thm:multiplicity}, this implies that, for any $t\ge t_0$, any $z=(x,y)\in X_f$ with $x\notin \mathrm{Per}_{1/m'}(\tau,n)$ and $1\le \nu\le \nu_0$, there exists a subset  $\excep=\excep_\nu(z,t;f)\subset \tau^{-n}(x)$ with $\#\excep\le p\lceil 10 m a_\nu \rceil$ such that the condition (\ref{eq:mult2}) holds for any $\xi\in [-\theta_0,\theta_0]$ with $J=J_\nu$. We put 
\[
\excep(z,t;f)=\cup_{\nu=1}^{\nu_0} \excep_\nu(z,t;f).
\]
From the condition (\ref{eq:condn}) in the choice of $n$, we have 
\begin{equation}\label{eq:cardSigma}
\# \excep(z,t;f)\le \exp(\varepsilon n).
\end{equation}
We may and do assume further that  $t_0$ is so large that $t_0>2n\cdot y_{\max}$ and also  
\begin{equation}\label{eq:backnu}
\frac{1}{t}\log |\det DT_f^t(w)|\in 
[\chi_{\min}(f)-\varepsilon, \chi_{\max}(f)+\varepsilon ]
\quad \mbox{ for any $w\in X_f$ and $t\ge t_0$.}\end{equation}

We prove that (\ref{eq:reduced_claim3}) holds for all $a,b\in A$ and $1\le j\le J(t)$ if $t\ge t_0$ is sufficiently large. (Notice that the constant $C_0$ in (\ref{eq:reduced_claim3}) have to be uniform for $a,b$, $t$ and $j$.)
Suppose $t\ge t_0$ and consider arbitrary $a,b\in A$ and $1\le j\le J(t)$. We take a point $z_0=z_0(j)\in \supp\rho_j^t \cap R$ and write $\kappa_{b}(z_0)=(x_0,y_0)$. 
For each point $x\in \tau^{-k}(x_0)$, we define
\[
t(k,x)=f^{(kn)}(x)+y_0
\]
so that $T^{t(k,x)}(x,0)=(x_0,y_0)=\kappa_b(z_0)$ for $x\in \tau^{-kn}(x_0)$. 
Then, we construct the subsets $H_k\subset \tau^{-kn}(x_0)$ for $k\ge 0$ inductively as follows. 
For $k=0$, we set $H_0=\{x_0\}$. 
If $H_{k-1}$ for $k\ge 1$ has been defined, let $H_k$ be the set of points  $x\in \tau^{-kn}(x_0)$ satisfying  
\begin{itemize}
\setlength{\itemsep}{3pt}
\item[(H1)] $x':=\tau^{n}(x)\in \tau^{-(k-1)n}(x_0)$ belongs to $H_{k-1}$,
\item[(H2)] $t-t(k-1,x')>t_0$, and
\item[(H3)]  (a)  $x'\in \mathrm{Per}_{1/m'}(\tau,n)$, or else 
(b)  $x\in 
\excep((x',0),t-t(k-1,x');f)$. 
\end{itemize}
\begin{rem} The condition (H2) ensures that the subset $\excep((x',0),t-t(k-1,x);f)$ in the condition (H3) is well-defined. 
\end{rem}

We check that the number of points in $H_k\subset \tau^{-kn}(x_0)$ is relatively small compared with $\#\tau^{-kn}(x_0)=\ell^{kn}$. Let us say that $x\in H_{k+\nu}$ is a descendant of $\nu$-th generation of $x'\in H_{k}$ if $\tau^{\nu n}(x)=x'$. Observe that
\begin{itemize}
\item[(1)] if $x'\notin \mathrm{Per}_{1/m'}(\tau,n)$, the number of its descendants of the first generation is bounded by $\exp(\varepsilon n )$ from  (\ref{eq:cardSigma}), and 
\item[(2)] if $x'\in \mathrm{Per}_{1/m'}(\tau,n)$, the number of its descendants of the first generation is $\ell^{n}$. 
\end{itemize}
In the case (2) above, the bound on the number of descendants is not effective. 
But, if the case (2) happens for $x'$, the same will not happen for most of its descendants for several generations. More precisely, for arbitrarily large $\nu_0>0$, we may let $m'$ be so large (depending on $n$) that the descendant of $x'$ of $\nu$-th generation with $\nu\le \nu_0$ is not contained in $\mathrm{Per}_{1/m'}(\tau,n)$ but for at most one exception. Therefore,  letting $m'$ be large, we may suppose 
\begin{equation}\label{eq:sk}
\#H_k\le C_0 \ell^n \exp(2\varepsilon k n )\quad \mbox{for $k\ge 0$}
\end{equation}
where $C_0>0$ is a constant depending only on $\varepsilon$.

Let $\mathcal{H}$ be the set of pairs $(k,x)$ of an integer $k\ge 0$ and a point $x\in H_k$.
We say that a pair $(k,x)\in \mathcal{H}$ is \emph{terminal} if 
$t-t(k,x)\le t_0$ and write $\mathcal{H}_{\mathrm{term}}\subset \mathcal{H}$ for the set of such pairs. If a pair $(k,x)\in \mathcal{H}$ is terminal, there is no descendant of $x\in H_k$.

Using the definitions prepared above, we divide the set $(T^t_f)^{-1}(b)$ into several (disjoint) subsets. For each $w\in (T^t_f)^{-1}(b)$ with $R^t_{a\to b,w}\neq \emptyset$, let $q(w)\in R^t_{a\to b, w}$ be the point such that 
\[
\tilde{q}(w):=\kappa_a(q(w))\in U^t_{b,w}\quad \mbox{and}\quad  
T^t_{f}(\tilde{q}(w))=\kappa_b(z_0). 
\]
For each $(k,x)\in \mathcal{H}$,  let $Q(k,x)$ be the set of points  $w\in (T_{f}^{t})^{-1}(b)$ with $R^t_{a\to b,w}\neq \emptyset$ such that 
\begin{equation}\label{eq:tf}
T_f^{s_{kn}(\tilde{z}_0,\tilde{q}(w);t)}(\tilde{q}(w))=(x,0)\quad\mbox{but}\quad 
T_f^{s_{(k+1)n}(\tilde{z}_0,\tilde{q}(w);t)}(\tilde{q}(w))\notin H_{k+1}\times\{0\}
\end{equation}
where $\tilde{z}_0=(x_0,y_0)=\kappa_b(z_0)$. (Recall (\ref{eq:sn}) for the definition of $s_{k}(z,w;t)$.)
Clearly the set $(T^t_f)^{-1}(b)$ splits into the disjoint subsets $Q(k,x)$ for $(k,x)\in \mathcal{H}$, provided that we ignore $w\in (T_{f}^{t})^{-1}(b)$ with $R^t_{a\to b,w}= \emptyset$.
\begin{rem}
The former condition in (\ref{eq:tf}) implies  that 
\[
s_{(k-1)n}(\tilde{z}_0,\tilde{q}(w);t)=t-t(k-1,\tau^{n}(x))>t_0>2n\cdot y_{\max}
\]
and hence that  $s_{(k+1)n}(\tilde{z}_0,\tilde{q}(w);t)$ in the latter condition is well-defined. 
\end{rem}

In the case where a pair $(k,x)\in \mathcal{H}$ is terminal, we have $t-t(k,x)\le t_0$ by definition and we have $T_f^{t-t(k,x)}(\tilde{q}(w))=(x,0)$.
In particular, we have 
\begin{equation}\label{eq:cardkx}
\# Q(k,x)\le \ell^{t_0/y_{\min}}\qquad \mbox{if }(k,x)\in \mathcal{H}_{\mathrm{term}} .
\end{equation}

In the case where a pair $(k,x)\in \mathcal{H}$ is \emph{not} terminal, we decompose the subset  $Q(k,x)\subset (T_{f}^{t})^{-1}(b)$ further. In this case, we have $t-t(k,x)>t_0$. Further, from the definition of $H_k$'s, we have, for $w\in Q(k,x)$, that
\[
T_f^{s_{kn}(\tilde{z}_0,\tilde{q}(w);t )}(\tilde{q}(w))=T_f^{t-t(k,x)}(\tilde{q}(w))=(x,0)\notin \mathrm{Per}_{1/m'}(\tau,n)
\]
and
\[
T_f^{s_{(k+1)n}(\tilde{z}_0,\tilde{q}(w);t )}(\tilde{q}(w))=(\tilde{x},0)\quad \mbox{with $\tilde{x}\notin \excep((x,0),t-t(k,x);f)$.}
\]
From (\ref{eq:backnu}), we can divide $Q(k,x)$ into disjoint subsets $Q_{\nu}(k,x)$ for $1\le \nu\le \nu_0$ so that $w\in Q(k,x)$ belongs to $Q_{\nu}(k,x)$ only if 
\begin{equation}\label{eq:eQ}
\det (DT_f^{t-t(k,x)}(\tilde{q}(w)))\in [e^{a_\nu (t-t(k,x))}, e^{b_\nu (t-t(k,x))}]
\end{equation}
and also that $Q_{\nu}(k,x)=\emptyset$ if (\ref{eq:JnuEmpty}) holds. 

We now estimate the essential operator norm of the operator on the left hand side of the claim (\ref{eq:reduced_claim3}). We decompose the operator into several parts, correspondingly to the decomposition of $(T_f^{t})^{-1}(b)$ into $Q_\nu(k,x)$ for $(k,x)\in \mathcal{H}$, and estimates the essential operator norms of those parts. (We ignore  $w\in (T_{f}^{t})^{-1}(b)$ with $R^t_{a\to b,w}= \emptyset$ since $\cL^t_{a\to b,w,j}$  vanishes for such $w$.)

In general, we have 
\[
\left\|\cL^{t}_{a\to b,w,j}:{\cB}^{r,p}(R)\to {\cB}^{r,p}(R)\right\|_{\ess}\le C_0 e^{(\chi_{\max}(f)+\varepsilon)t/2p}
\]
by Proposition \ref{lm:local1} and Proposition \ref{lm:main} (in the trivial case of $M=1$ and $\Delta=1$.) Hence, by a simple estimate using  (\ref{eq:sk}) and (\ref{eq:cardkx}), we obtain
\begin{multline*}\label{eq:term}
\left\|\sum_{(k,x)\in \mathcal{H}_{\mathrm{term}}}\sum_{w\in Q(k,x)}\cL^{t}_{a\to b,w,j}:{\cB}^{r,p}(R)\to {\cB}^{r,p}(R)\right\|_{\ess}
\\
\le C_0 
\left(
\sum_{k\le t/(n y_{\min})} \ell^{t_0/y_{\min}}\cdot \ell^n \exp(2\varepsilon kn)
\right)\cdot 
e^{(\chi_{\max}(f)+\varepsilon)t/2p}
\end{multline*}
where the range of $k$ in the sum on the right hand side is restricted to $k\le t/(n y_{\min})$ because 
 $H_k$ is empty if $nk\cdot y_{\min}> t$. Since we have
\begin{equation}\label{eq:muf2}
\mu(f)\ge \frac{(p-1)h(f)+\chi_{\max}}{2p}>\frac{\chi_{\max}}{2p}
\end{equation}
from the definition and since we may suppose that $\varepsilon>0$ is small, we see that the right hand side of the inequality above above is bounded by $e^{(\mu(f)+\varepsilon) t}$ when  $t$ is sufficient large. 

We next consider $(k,x)\in \mathcal{H}$ which is \emph{not} terminal.
First of all, observe that, for the case of $(0,x_0)\in H_0$, the argument in the last subsection applies to  
\[
\sum_{w\in Q(0,x_0)} \cL^{t}_{a\to b,w,j}:{\cB}^{r,p}(R)\to {\cB}^{r,p}(R)
\]
 and we can conclude that the essential operator norm of this operator is bounded by $C_0 \exp((\mu +\varepsilon)t)$. (Note that the subset $Q(0,x_0)$ by definition does not contain the problematic elements $w\in (T^t_f)^{-1}(b)$ such that 
 $T^{s_n(\tilde{z}_0,\tilde{q}(w);t)}_f(\tilde{q}(w))\in \mathcal{E}(\tilde{z}_0,t;f)$.)
 Below we see that a similar argument applies to the case $k>0$.
  
Suppose that $(k,x)\in \mathcal{H}$ is not terminal and $w\in Q(k,x)$. 
We consider the local chart $\kappa_c:Q\to X_f$ for $c=(x,0)\in X_f$ so that the point $(x,6\delta_0)$ belongs to $\kappa_{c}(R)$. Let $V\subset Q$ be the neighborhood of $(0,6\delta_0)$ that is mapped by $T_f^{t(k,x)-6\delta_0}\circ \kappa_c$ bijectively onto $\kappa_b(R)$.  
We define 
\begin{align*}
E(w)&:=E(\tilde{q}(w),t-t(k,x);f):=\det (DT_f^{t-t(k,x)}(\tilde{q}(w)))\ge 1
\intertext{and} 
S(w)&:=-F(\tilde{q}(w),t-t(k,x);f)/E(\tilde{q}(w),t-t(k,x);f)
\end{align*}
where $E(\cdot)$ and $F(\cdot)$ on the right hand sides are those in (\ref{eq:EF}), so that
\begin{equation}\label{eq:SEkxdef}
(DA_{a\to c}^{t-t(k,x)+6\delta_0})_{q(w)}=\begin{pmatrix}E(w) &0\\
-S(w)E(w) &1
\end{pmatrix}
\end{equation}
where $A^t_{a\to c}:R^t_{a\to c}\to R$ is defined by (\ref{def:Aab}) and (\ref{def:Rab}) with $b$ replaced by $c$. 
Then we may express the operator 
\[
\sum_{w\in Q(k,x)} \cL^{t}_{a\to b,w,j}:{\cB}^{r,p}(R)\to {\cB}^{r,p}(R)
\]
as  
\[
\sum_{w\in Q(k,x)}\cL^{t}_{a\to b,w,j}=\Xi_{k,x}\circ\left(
\sum_{1\le \nu\le \nu_0}  \Psi_{k,x,\nu}\circ \Phi_{k,x,\nu}\right) 
\]
where the operators $\Xi_{k,x}$, $\Psi_{k,x,\nu}$ and $\Phi_{k,x,\nu}$ are defined as follows: The operators
\[
\Phi_{k,x,\nu}:\cB^{r,p}(R)\to \bigoplus_{w\in Q_\nu(k,x)} \cB^{r,p}_{S(w),E(w)}\left(V\right)
\] 
and
\[
\Psi_{k,x,\nu}:\bigoplus_{w\in Q_\nu(k,x)} \cB^{r,p}_{S(w),E(w)}\left(V\right)\to \cB^{r,p}\left(V\right)
\]
are respectively analogues of the operators $\Psi_{\nu}$ and $\Phi_{\nu}$ considered in the last subsection and  defined precisely by
\begin{align*}
&\Phi_{k,x,\nu}(u)=\left((\rho^t_{a\to b,w,j}\cdot u)\circ (A^{t-t(k,x)+6\delta_0}_{a\to c}|_{R^t_{a\to b,w}})^{-1}\right)_{w\in Q_\nu(k,x)} 
\intertext{
and}
&\Psi_{k,x,\nu}\left((u_w)_{w\in Q_\nu(k,x)}\right)=\sum_{w\in Q_\nu(k,x)} u_w.
\end{align*}
On the other hand, we define   
\[
\Xi_{k,x}:\cB^{r,p}(V)\to \cB^{r,p}(R),\quad \Xi_{k,x}u=u\circ  (A^{t(k,x)-6\delta_0}_{c\to b}|_{V})^{-1}.
\]
For the operator $
\sum_{1\le \nu\le \nu_0}  \Psi_{k,x,\nu}\circ \Phi_{k,x,\nu}$, 
the situation is parallel to that considered in the last subsection and hence  we can get the estimate
\[
\left\|\sum_{1\le \nu\le \nu_0}  \Psi_{k,x,\nu}\circ \Phi_{k,x,\nu}:\cB^{r,p}(R)\to  \cB^{r,p}\left(V\right)\right\|_{\ess}
\le C_0\exp((\mu(f)+\varepsilon)(t(k,x)+6\delta_0))
\]
applying  Proposition~\ref{lm:local1}, Lemma \ref{lm:pu} and Proposition \ref{lm:main}. 
For the operator $\Xi_{k,x}$,  we have the estimate
\[
\|\Xi_{k,x}\|_{\ess}\le C_0 
 \exp((\chi_{\max}(f)+\varepsilon)(t-t(k,x)-6\delta_0)/2p)
\]
from Proposition~\ref{lm:local1} and Proposition \ref{lm:main} (in the trivial case of $M=1$ and $\Delta=1$).
Hence, noting (\ref{eq:muf2}), we obtain 
\[
\left\|\sum_{w\in Q(k,x)}\cL^{t}_{a\to b,w,j}:\cB^{r,p}(R)\to \cB^{r,p}(R)\right\|_{\ess}
\le C_0\exp((\mu(f)+\varepsilon)t)
\]
provided that $\varepsilon>0$ is sufficiently small. 
Therefore we conclude  (\ref{eq:reduced_claim3}) by summing these estimates for $(k,x)\in \mathcal{H}\setminus \mathcal{H}_{\mathrm{term}}$ and using  (\ref{eq:sk}) and arbitrariness of $\varepsilon>0$.  

We have proved that the conclusion of Theorem \ref{thm:spectrum} holds for $f\in \mathcal{G}$. 
But notice that, to get the conclusion of Theorem \ref{thm:spectrum} for some $\eta>0$, it is actually enough to show the estimate (\ref{eq:reduced_claim3}) for $\varepsilon=\eta/2$ and some (very) large $t$ according to $\eta$ and $C_0$. Hence the conclusion remains true for small perturbations of $f$. This completes the proof of Theorem \ref{thm:spectrum}.

\section{Proof of Theorem \ref{th:generalized_huber}}\label{sec:pf_th_huber}
In this section, we justify the heuristic argument in Subsection \ref{ss:heu} and relate the distribution of the periods of prime periodic orbits of $T_f^t$ and  spectral properties of the transfer operators $\cL^t$. In principle, we follow the idea presented in the paper \cite{BT08} where a similar statement for hyperbolic diffeomorphisms is proved.

\subsection{A lift of the operator $\bL^{t}$}
We first introduce a kind of lift (or extension) $\mathbb{L}^{t}$ of the transfer operator $\cL^t$.  
Let us recall the definitions of the Banach spaces $\cB^{r,p}(\real^2)$  and $\cB^{r,p}(X_f)$  in Section~\ref{sec:defcBonR} and \ref{sec:pf1} respectively. Using the notation appeared in those definitions, we define the operators
\[
\mathbb{I}:\bigoplus_{a\in A}C^{\infty}(R)\to\bigoplus_{a\in A}\prod_{m,n}\mathcal{S}(\real^{2}),
\qquad 
\mathbb{I}^{*}:\bigoplus_{a\in A}\bigoplus_{m,n}\mathcal{S}(\real^{2})\to\bigoplus_{a\in A}C^{\infty}(\real^2)
\]
by  
\begin{align*}
&\mathbb{I}\left((u_{a})_{a\in A}\right)=\left(\hat{\chi}_{m,n}(u_{a})\right)_{a\in A,m\in\integer_{+},n\in\integer}
\intertext{
and}
&\mathbb{I}^{*}\left((u_{a,m,n})_{a\in A,m\in\integer_{+},n\in\integer}\right)\mapsto\left(u_{a}:=\sum_{m,n}\hat{\chi}'_{m,n}(u_{a,m,n})\right)_{a\in A}
\end{align*}
where $\chi'_{m,n}$ is the function introduced in the proof of Lemma \ref{lem:xnm_trace} and the ranges of the variables $m$ and $n$ are $\mathbb{Z}_{\ge 0}$ and $\mathbb{Z}$ respectively. 
We have  $\mathbb{I}^{*}\circ\mathbb{I}=\mathrm{Id}$
because $\{\chi_{m,n}\}$ is a partition of unity on $\real^2$ and $\chi'_{m,n}\cdot \chi_{m,n}=\chi_{m,n}$. 

Let $\mathbb{B}^{r,p}$ be the Banach space obtained as  the completion of the space $\bigoplus_{m,n}\mathcal{S}(\real^{2})$
with respect to the norm 
\[
\|(u_{m,n})\|_{r,p}^{(\varepsilon_0)}=\left(\sum_{m,n}2^{2rpm}\cdot  \varepsilon(m)\cdot\|u_{m,n}\|_{L^{2p}}^{2p}\right)^{1/{2p}}
\]
where  
\[
\varepsilon(m)=\begin{cases}
\varepsilon_0,&\quad \mbox{if $m=0$;}\\
1,&\quad \mbox{otherwise}
\end{cases}
\]
with $\varepsilon_0>0$ a small constant that we will specify later. From the definition of the norm on $\cB^{r,p}(\real^2)$, we see that the operators $\mathbb{I}$ and $\mathbb{I}^*$ extend to bounded operators
\[
\mathbb{I}:\bigoplus_{a\in A}\cB^{r,p}(R)\to \bigoplus_{a\in A}\mathbb{B}^{r,p},\qquad 
\mathbb{I}^*:\bigoplus_{a\in A}\mathbb{B}^{r,p}\to\bigoplus_{a\in A}\cB^{r,p}(R).
\]
\begin{rem}
In the definition of the norm $\|\cdot\|_{r,p}^{(\varepsilon_0)}$ above, we put the factor $\varepsilon(m)$ by a technical reason. Of course,  
the Banach space $\mathbb{B}^{r,p}$ (as a set) does not depend on the choice of the constant~$\varepsilon_0>0$. If $\varepsilon_0=1$, the operator $\mathbb{I}$ is an isometric injection by definition. 
\end{rem}

For $t\ge 6\delta_0$, we define 
\[
\mathbb{L}^{t}:\bigoplus_{a\in A}\mathbb{B}^{r,p}
\to \bigoplus_{a\in A}\mathbb{B}^{r,p},\quad\mathbb{L}^{t}=\mathbb{I}\circ\bL^{t}\circ\mathbb{I}^{*}.
\]
Also, for a $C^\infty$ function $\varphi:\real\to \real$ compactly supported on $[6\delta_0,+\infty)$, we define
\[
\mathbb{L}^{\varphi}=\int \varphi(t)\cdot \mathbb{L}^{t} dt:\bigoplus_{a\in A}\mathbb{B}^{r,p}\to \bigoplus_{a\in A}\mathbb{B}^{r,p}.
\]

Note that the following diagram commutes:
\[
\begin{CD}
\bigoplus_{a\in A}\mathbb{B}^{r,p}@>{\mathbb{L}^{t}}>>\bigoplus_{a\in A}\mathbb{B}^{r,p}\\
@A{\mathbb{I}}AA @A{\mathbb{I}}AA\\
\bigoplus_{a\in A}\cB^{r,p}(R)@>{\bL^t}>> \bigoplus_{a\in A}\cB^{r,p}(R)\\
@V{\Pi}VV @V{\Pi}VV\\
\cB^{r,p}(X_f)@>>{\cL^t}>\cB^{r,p}(X_f)
\end{CD}
\]
The essential spectral radii of the three operators $\cL^t$, $
\bL^t$ and $\mathbb{L}^t$ in the commutative diagram above    
are same and their peripheral eigenvalues outside of it coincide. Indeed, we have checked this relation between $\cL^t$ and $\bL^t$ in Subsection \ref{ss:defcB} and, similarly,  can check this relation between $
\bL^t$ and $\mathbb{L}^t$.

We define the flat trace of operators on $\bigoplus_{a\in A}\mathbb{B}^{r,p}$. 
Let  $\mathbb{B}_{a,m,n}^{r,p}$ be the Banach space
$L^{2p}(\real^{2})$ equipped with the norm $
\|u\|_{m}^{(\varepsilon_0)}=
2^{rm}\cdot \varepsilon(m)\cdot \|u\|_{L^{2p}}$.
Then a bounded operator $\mathbb{M}:\bigoplus_{a\in A}\mathbb{B}^{r,p}\to \bigoplus_{a\in A}\mathbb{B}^{r,p}$ may be regarded as a matrix of operators whose components are
\[
\mathbb{M}_{(a,m,n)\to(a',m',n')}:\mathbb{B}_{a,m,n}^{r,p}\to\mathbb{B}_{a',m',n'}^{r,p},\quad u\mapsto \pi_{a',m',n'}\circ \mathbb{M}\circ \pi_{a,m,n} u
\]
where $\pi_{a,m,n}$ is the projection to the $(a,m,n)$-component. If all the diagonal components $\mathbb{M}_{(a,m,n)\to(a,m,n)}$ are trace class operators and if 
\[
\sum_{a,m,n}|\Tr\, \mathbb{M}_{(a,m,n)\to(a,m,n)}|<\infty,
\]
we define the flat trace of $\mathbb{M}$ by
\[
\Tr^\flat\,\mathbb{M}=\sum_{a,m,n}\Tr\, \mathbb{M}_{(a,m,n)\to(a,m,n)}.
\]
\begin{lem}
If\/ $\mathbb{M}$ is a trace class operator, its flat trace is well defined and we have
$\Tr^\flat\,\mathbb{M}=\Tr\,\mathbb{M}$.  
\end{lem}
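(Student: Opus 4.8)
The plan is to identify $\bigoplus_{a\in A}\mathbb{B}^{r,p}$ with an $L^{2p}$-space over a $\sigma$-finite measure space and then combine the approximation property of $L^{2p}$ with a routine Hölder--Fubini bookkeeping argument.

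First I would let $Y$ be the disjoint union of copies of $\real^{2}$ indexed by the triples $i=(a,m,n)\in A\times\integer_{+}\times\integer$, and equip the copy $Y_{i}$ with the measure that is $2^{2rpm}\varepsilon(m)$ times Lebesgue measure. Then $\|\cdot\|^{(\varepsilon_{0})}_{r,p}$ realises an isometric isomorphism $\bigoplus_{a\in A}\mathbb{B}^{r,p}\cong L^{2p}(Y,\mu)$, under which $\mathbb{B}^{r,p}_{a,m,n}$ becomes $L^{2p}(Y_{i},\mu)$ up to a constant rescaling of the norm, the projection $\pi_{a,m,n}$ becomes multiplication by $\mathbf{1}_{Y_{i}}$, the inclusion $\iota_{i}\colon\mathbb{B}^{r,p}_{a,m,n}\hookrightarrow\bigoplus_{a}\mathbb{B}^{r,p}$ is the evident one, $\|\pi_{i}\|=\|\iota_{i}\|=1$, $\pi_{i}\iota_{i}=\mathrm{id}$, and $\sum_{i}\iota_{i}\pi_{i}v$ converges in norm to $v$ for every $v$. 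Since $L^{2p}(Y,\mu)$ and each $L^{2p}(Y_{i},\mu)$ have the (metric) approximation property, the trace of a nuclear (= trace class, in the terminology of this paper) operator on any of them is well defined and independent of the chosen nuclear representation; this is what makes $\Tr\,\mathbb{M}$ a meaningful number in the first place.

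Next I would fix a nuclear representation $\mathbb{M}=\sum_{k}\phi_{k}\otimes\psi_{k}$ with $\phi_{k}\in L^{q}(Y,\mu)=(L^{2p}(Y,\mu))^{*}$, $\psi_{k}\in L^{2p}(Y,\mu)$, $\sum_{k}\|\phi_{k}\|\,\|\psi_{k}\|<\infty$, where $q^{-1}+(2p)^{-1}=1$. Since $\mathbb{M}_{i\to i}=\pi_{i}\circ\mathbb{M}\circ\iota_{i}$ is the composition of a nuclear operator with norm-one maps, it is nuclear, hence trace class, and the inherited representation $\mathbb{M}_{i\to i}=\sum_{k}(\phi_{k}\circ\iota_{i})\otimes(\pi_{i}\psi_{k})$ gives $\Tr\,\mathbb{M}_{i\to i}=\sum_{k}\int_{Y_{i}}\phi_{k}\psi_{k}\,d\mu$. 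Because $\|\phi_{k}\|_{L^{q}(Y)}$ equals the $\ell^{q}$-norm of $\big(\|\phi_{k}\|_{L^{q}(Y_{i})}\big)_{i}$ and $\|\psi_{k}\|_{L^{2p}(Y)}$ the $\ell^{2p}$-norm of $\big(\|\psi_{k}\|_{L^{2p}(Y_{i})}\big)_{i}$, Hölder's inequality on each $Y_{i}$ followed by Hölder's inequality over $i$ yields
\[
\sum_{i}\sum_{k}\Big|\int_{Y_{i}}\phi_{k}\psi_{k}\,d\mu\Big|\;\le\;\sum_{k}\sum_{i}\|\phi_{k}\|_{L^{q}(Y_{i})}\,\|\psi_{k}\|_{L^{2p}(Y_{i})}\;\le\;\sum_{k}\|\phi_{k}\|\,\|\psi_{k}\|\;<\;\infty .
\]
In particular $\sum_{i}|\Tr\,\mathbb{M}_{i\to i}|<\infty$, so $\Tr^{\flat}\mathbb{M}$ is well defined; and since the double series is absolutely convergent I may interchange the two summations and then use $\sum_{i}\iota_{i}\pi_{i}\psi_{k}=\psi_{k}$ together with the continuity of $\phi_{k}$ to obtain
\[
\Tr^{\flat}\mathbb{M}=\sum_{i}\Tr\,\mathbb{M}_{i\to i}=\sum_{k}\sum_{i}\phi_{k}(\iota_{i}\pi_{i}\psi_{k})=\sum_{k}\phi_{k}(\psi_{k})=\Tr\,\mathbb{M}.
\]

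The one genuinely delicate point is the very first one: ensuring that $\Tr\,\mathbb{M}$ is representation-independent, which I would get from the approximation property of $L^{2p}$-spaces and of their countable $\ell^{2p}$-sums. Once that is in place, everything else is the elementary manipulation above. (Alternatively one could simply \emph{define} $\Tr\,\mathbb{M}$ for nuclear $\mathbb{M}$ by the last displayed formula, which is all the later sections use; but since $L^{2p}$ has the approximation property this detour is unnecessary.)
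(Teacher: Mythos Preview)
Your proof is correct and follows essentially the same route as the paper's. Both arguments pick a nuclear representation $\mathbb{M}=\sum_k \phi_k\otimes\psi_k$, bound $\sum_i|\Tr\,\mathbb{M}_{i\to i}|$ by $\sum_k\|\phi_k\|\,\|\psi_k\|$ via the H\"older inequality (first on each $Y_i$, then over $i$), and then interchange summations to identify $\Tr^\flat\mathbb{M}$ with $\sum_k\phi_k(\psi_k)=\Tr\,\mathbb{M}$. The paper is terser: it checks the rank-one case explicitly and then appeals to ``the definition of the trace norm'' to pass to general nuclear operators, whereas you spell out the $L^{2p}(Y,\mu)$ identification and flag the approximation property as the point that makes $\Tr\,\mathbb{M}$ representation-independent. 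That extra care is not wasted---the paper quietly relies on the same fact through its reference to \cite{GGK}---but it does not amount to a different argument.
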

\begin{proof}Suppose that $\mathbb{M}$ is a rank one operator of the form $\mathbb{M}u=y(u)\cdot x$ with 
\[
x=(x_{a,m,n})\in \bigoplus_{a\in A}\mathbb{B}^{r,p}, \quad y=(y_{a,m,n})\in \bigoplus_{a\in A}(\mathbb{B}^{r,p})^*. 
\]
Note that, letting $q>1$ be such that $q^{-1}+(2p)^{-1}=1$, we have  
\[
\|y\|_{(\bigoplus_{a\in A}\mathbb{B}^{r,p})^*}=\left(\sum_{a,m,n}\|y_{a,m,n}\|_{(\mathbb{B}_{a,m,n}^{r,p})^*}^q\right)^{1/q}
\]
where
\[
\|v\|_{(\mathbb{B}_{a,m,n}^{r,p})^*}=2^{-rm}\cdot \varepsilon(m)^{-1}\cdot \|v\|_{L^{q}}.
\]
Hence we have, by H\"older inequality, that 
\begin{align*}
\sum_{a,m,n}|\Tr\,\mathbb{M}_{(a,m,n)\to(a,m,n)}|&\le \sum_{a,m,n}\|x_{a,m,n}\|_{\mathbb{B}_{a,m,n}^{r,p}}\cdot 
\|y_{a,m,n}\|_{(\mathbb{B}_{a,m,n}^{r,p})^*}\\
&\le \|x\|_{\bigoplus_{a\in A}\mathbb{B}^{r,p}}\cdot \|y\|_{\bigoplus_{a\in A}(\mathbb{B}^{r,p})^*}
\end{align*}
and also that
\[
\sum_{a,m,n}\Tr\,\mathbb{M}_{(a,m,n)\to(a,m,n)}= \sum_{a,m,n}y_{a,m,n}(x_{a,m,n}) =y(x).
\]
This and the definition of the trace norm give the conclusion. 
\end{proof}

The flat trace may be defined for operators that are not of the trace class. We introduce such a class of operators.  
We say that a bounded linear operator $\mathbb{M}:\bigoplus_{a\in A}\mathbb{B}^{r,p}\to \bigoplus_{a\in A}\mathbb{B}^{r,p}$ is \emph{triangular} if its components $\mathbb{M}_{(a,m,n)\to(a',m',n')}$ is zero  whenever
\[
2^m\langle n\rangle^2\le 2^{m'}\langle n'\rangle^2.
\]
Obviously, if $\mathbb{M}$ is triangular, all of its diagonal components vanish and hence its flat trace is defined to be zero. Note that the sum and composition of two triangular operators are again triangular.

In the next subsection, we will see that the operator $\mathbb{L}^\varphi$ for a $C^\infty$ function $\varphi:\real\to \real$ compactly supported on $[t_0,+\infty)$ with sufficiently  large $t_0$ is decomposed into a trace class operator and a triangular operator. Hence the flat trace is well defined for $\mathbb{L}^\varphi$. Note that the components of $\mathbb{L}^\varphi$ are written as
\[
\mathbb{L}_{(a,m,n)\to(a',m',n')}^{\varphi}:\mathbb{B}_{a,m,n}^{r,p}\to\mathbb{B}_{a',m',n'}^{r,p},\;\;u\mapsto \hat{\chi}_{m',n'}\circ \left(\int \varphi(t)\cdot \cL_{a\to a'}^{t}dt\right)\circ \hat{\chi}'_{m,n} u.
\]
This is an integral operator with kernel $K^{\varphi}_{(a,m,n)\to(a',m',n')}(z',z)\in \mathcal{S}(\real^4)$ and hence is a trace class operator. If $(a,m,n)=(a',m',n')$, its trace is calculated as the integration of the kernel on the diagonal:
\[
\Tr \mathbb{L}_{(a,m,n)\to(a,m,n)}^{\varphi}=\int_{\real^2} K^{\varphi}_{(a,m,n)\to(a,m,n)}(z,z) dz.
\]
Since $
\sum_{m,n:2^m \langle n^2\rangle  \le k} \hat{\chi}_{m,n}$ 
converges to the Dirac function $\delta_0$ as $k\to \infty$, we find 
\begin{equation}\label{eq:comp_flattr}
\Tr^{\flat}\mathbb{L}^\varphi=\sum_{a,m,n}\Tr \mathbb{L}_{(a,m,n)\to(a,m,n)}^{\varphi}=\sum_{\gamma\in \Gamma}\sum_{n=1}^{\infty}
\frac{|\gamma|\cdot \varphi(n|\gamma|)}{1-E_\gamma^{-n}}
\end{equation}
by straightforward computation. (Though this computation is not very simple, we ask the readers to check it.) Note that the right hand side is what we expect for $\int \varphi(t) \Tr^\flat \cL^t dt$ from (\ref{eq:ABT}).

\subsection{A decomposition of the lifted operator }
We decompose the operator $\mathbb{L}^{t}$ for $t\ge t_0$ (resp. $\mathbb{L}^\varphi$ for  $\varphi\in C^\infty_0([t_0,+\infty))$ into two parts as
\[
\mathbb{L}^{t}=\mathbb{L}^{t}_{\trace}+\mathbb{L}^{t}_{\tracefree}\quad (\mbox{resp. }\mathbb{L}^{\varphi}=\mathbb{L}^{\varphi}_{\trace}+\mathbb{L}^{\varphi}_{\tracefree})
\]
where $\mathbb{L}^{t}_{\tracefree}$ (resp. $\mathbb{L}^{\varphi}_{\tracefree}$) consists of its components $\mathbb{L}_{(a,m,n)\to(a',m',n')}^{t}$ (reps. $\mathbb{L}_{(a,m,n)\to(a',m',n')}^{\varphi}$) that satisfies the condition
\begin{equation}\label{eq:cond_tracefree}
2^{m'}\langle n'\rangle^2< 2^{m+4}\langle n\rangle^2 \exp(-(\chi_{\min}+\varepsilon) t)
\end{equation}
(resp. the same condition with  $t=\min \supp \varphi\ge t_0$) 
and the operator $\mathbb{L}^{t}_{\tracefree}$ (resp. $\mathbb{L}^{\varphi}_{\tracefree}$) consists of the remaining components.  
Clearly the trace-free part $\mathbb{L}^{t}_{\tracefree}$ (resp. $\mathbb{L}^{\varphi}_{\tracefree}$) is triangular provided that $t_0$ is sufficiently large. 

Below we present two lemmas, whose proofs are deferred to the next subsection.
The constants  $r>0$  and $t_0>0$ are assumed to be sufficiently large.
The first lemma tells that the trace-free part is strongly contracting. This is a consequence of the choice of the weights in the definition of the Banach space $\mathbb{B}^{r,p}$ and may be rather obvious.  
\begin{lem} \label{lm:T1}
For $t_0\le t\le 2t_0$, we have 
\[
\left\|\mathbb{L}_{\tracefree}^{t}:\bigoplus_{a\in A}\mathbb{B}^{r,p}\to \bigoplus_{a\in A}\mathbb{B}^{r,p}\right\|\le \exp(\rho t)
\]
where $\rho=\rho_p(f)$. (See $(\ref{eq:bound_ess})$ for the definition of $\rho_p(f)$.) 
\end{lem}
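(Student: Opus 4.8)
The plan is to estimate the norm of each component $\mathbb{L}^t_{(a,m,n)\to(a',m',n')}$ surviving in $\mathbb{L}^t_{\tracefree}$ and then sum. Recall $\mathbb{L}^t=\mathbb{I}\circ \bL^t\circ \mathbb{I}^*$, so the component $\mathbb{L}^t_{(a,m,n)\to(a',m',n')}$ equals $\hat\chi_{m',n'}\circ \cL^t_{a\to a'}\circ \hat\chi'_{m,n}$, viewed as an operator from $\mathbb{B}^{r,p}_{a,m,n}$ (which is $L^{2p}(\real^2)$ with the weight $2^{rm}\varepsilon(m)$) to $\mathbb{B}^{r,p}_{a',m',n'}$ (with weight $2^{rm'}\varepsilon(m')$). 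Thus the norm of this component is $2^{rm'}\varepsilon(m')\cdot 2^{-rm}\varepsilon(m)^{-1}$ times the $L^{2p}$-operator norm of $\hat\chi_{m',n'}\circ \cL^t_{a\to a'}\circ \hat\chi'_{m,n}$. First I would note, as in the proof of Proposition \ref{lm:local1} (the estimate leading to \eqref{eq:vnm}--\eqref{eq:estK}), that this $L^{2p}$-operator norm is bounded by $C_\nu \Delta_1(n,n')^{-\nu}\Delta_2(n,m,n',m')^{-\nu}$ with $\nu$ as large as we like, uniformly in $t$ in the compact range $t_0\le t\le 2t_0$ and uniformly in $a,a'$ (there are finitely many charts); here $\cL^t_{a\to a'}$ is a finite sum of the localized operators $\cL^t_{a\to a',w,j}$, each locally of the form \eqref{eq:L_local} with expansion factor $E(w)\in[e^{\chi_{\min}t/2},e^{\chi_{\max}t}]$ roughly. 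Second, the surviving components satisfy the complement of \eqref{eq:cond_tracefree}, namely $2^{m'}\langle n'\rangle^2\ge 2^{m+4}\langle n\rangle^2\exp(-(\chi_{\min}+\varepsilon)t)$; this controls how much the frequency can drop, which is exactly what is needed to bound the weight ratio $2^{r(m'-m)}$.

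The heart of the computation is the following: for a component coming from a local piece with expansion $E$, the Fourier support of $\chi_{m',n'}$ and of $DA^\dag_w(\supp\chi'_{m,n})$ must overlap, which (compare the discussion around \eqref{eq:relnn} and the support statements in Section \ref{sec:defcBonR}) forces $2^{m'}\langle n'\rangle^2\lesssim E\cdot 2^m\langle n\rangle^2$, i.e. $2^{m'-m}\lesssim E\cdot(\langle n\rangle/\langle n'\rangle)^2$. Combined with the reverse inequality from the trace-free condition, I get $2^{m'-m}$ trapped between $c\,e^{-(\chi_{\min}+\varepsilon)t}$ and $C\,E$ up to the $\langle n\rangle$-factors. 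The operator norm of the whole component on the weighted spaces is then at most
\[
C_\nu\cdot 2^{r(m'-m)}\cdot \frac{\varepsilon(m')}{\varepsilon(m)}\cdot \Delta_1(n,n')^{-\nu}\Delta_2(n,m,n',m')^{-\nu}\cdot \|\hat\chi_{m',n'}\circ \cL^t_{a\to a'}\circ \hat\chi'_{m,n}\|_{L^{2p}},
\]
and summing over $(a',m',n')$ for fixed $(a,m,n)$ — the $\Delta_1,\Delta_2$ factors make the sum over $n'$ and over $m'$ with $2^{m'}\langle n'\rangle^2$ far from $2^m\langle n\rangle^2$ converge — leaves an effective sum over a bounded range of $m'$ near the value determined by $E$, giving a geometric-type bound. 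The extra weight gain $2^{r(m'-m)}\le 2^{r\cdot(\log_2 E + O(\log\langle n\rangle))}\approx E^r$ is dangerous, but it is precisely offset because the localized transfer operator $\cL^t_{a\to a',w,j}$ loses a factor like $E^{-1/2p}$ relative to $L^{2p}$ (the Jacobian factor in Proposition \ref{lm:local1}), and there are at most $e^{(h(f)+\varepsilon)t}$ preimages $w$; I would organize the sum over $w$ by the value of $E(w)$ into the intervals $J_\nu$ exactly as in Subsections \ref{ss:pre}--\ref{ss:pf_Thspec}, and apply Proposition \ref{lm:main}. The upshot is a bound of the form $\exp((\rho_p(f)+o(1))t)$ as $t\to\infty$; since only $t_0\le t\le 2t_0$ is needed and $t_0$ is taken large, one absorbs the $o(1)$ and the constant $C_\nu$ into the exponent (adjusting $r$ large per \eqref{eq:choice_r}), yielding $\|\mathbb{L}^t_{\tracefree}\|\le \exp(\rho_p(f)t)$.

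The step I expect to be the main obstacle is making the bookkeeping of the two competing constraints on $2^{m'-m}$ tight enough that the weight gain $2^{r(m'-m)}$ is genuinely dominated by the combination of the Jacobian loss $E^{-1/2p}$ per branch, the entropy count $e^{(h(f)+\varepsilon)t}$ of branches, and the transversality gain from Proposition \ref{lm:main} — i.e. reproducing, inside this frequency-localized framework, the same balance $\rho_p(f)=\tfrac12(1+(\max\{p,\alpha(f)\}-1)/p)h(f)$ that was obtained for the essential spectral radius in Theorem \ref{thm:spectrum}. In particular one must check that the trace-free truncation \eqref{eq:cond_tracefree}, with its margin $\exp(-(\chi_{\min}+\varepsilon)t)$, is exactly what is needed so that no surviving component has a frequency drop large enough to spoil the estimate; this is where the choice of the cutoff threshold in \eqref{eq:cond_tracefree} is used in an essential way. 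The remaining estimates are routine repetitions of the arguments in Proposition \ref{lm:local1}, Lemma \ref{lm:pu} and Proposition \ref{lm:main}, so I would state them briefly and refer back.
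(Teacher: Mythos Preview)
You have the trace-free condition backwards. The operator $\mathbb{L}^t_{\tracefree}$ consists of the components that \emph{satisfy} \eqref{eq:cond_tracefree}, not its complement: for these components $2^{m'}\langle n'\rangle^2 < 2^{m+4}\langle n\rangle^2 \exp(-(\chi_{\min}+\varepsilon)t)$, so the output frequency is much \emph{smaller} than the input frequency. (This is also the direction in which $DA^\dag$ acts: for a branch with expansion $E$, the $\xi$-frequency is multiplied by $E^{-1}$, not by $E$.) In the dominant case $|n-n'|\le 3$ (forced by the $\Delta_1$-decay), this gives $m'-m\le -(\chi_{\min}+\varepsilon)t/\log 2+O(1)$, so the weight ratio obeys $2^{r(m'-m)}\lesssim \exp(-r(\chi_{\min}+\varepsilon)t)$, which is exponentially \emph{small}, not ``dangerous.''

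Consequently the lemma is essentially trivial, and the paper proves it in one line: combine this weight drop with the crude bound of the first claim of Lemma~\ref{lm:bbL}, namely $\|\mathbb{L}^t_{(a,m,n)\to(a',m',n')}\|_{L^{2p}}\le C_\nu e^{(\chi_{\max}+\varepsilon)t/p}\Delta_1(n,n')^{-\nu}$, and sum over $(a',m',n')$. Since $r>\bar\chi_{\max}/\bar\chi_{\min}$ by \eqref{eq:choice_r}, the net exponential rate $-r\chi_{\min}+\chi_{\max}/p$ is negative, and in fact can be made arbitrarily negative by increasing $r$; this is exactly the content of the remark following the lemma. The small constant $\varepsilon_0$ in the norm handles the boundary case $m'=0$. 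Your plan to invoke Proposition~\ref{lm:main} and the entire transversality/entropy balance of Theorem~\ref{thm:spectrum} is therefore aimed at the wrong target: that machinery is needed for the \emph{trace} part (and ultimately for the essential spectral radius), not for $\mathbb{L}^t_{\tracefree}$.
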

\begin{rem}
As we will see in the proof, we can  actually prove the statement above for arbitrarily small $\rho$ by letting $r$ and $t_0$ larger. 
\end{rem}
The next lemma proves that the trace class part  $\mathbb{L}^\varphi_{\trace}$ is a trace class operator and also gives an estimates on its trace norm. 
\begin{lem}\label{lm:T2}
For a bounded subset $\mathcal{X}$ in $C^\infty_0([-1,1])$, there exists a constant $C_*=C_*(\mathcal{X})$ such that, if $\varphi$ is supported on $[t_0,2t_0]$ and if 
there exists an affine map $A(t)=\alpha t+\beta$ with $\alpha\in (0,1)$ such that the function 
$\varphi\circ A(t)=\varphi(\alpha t+\beta)$ belongs to  $\mathcal{X}$, then 
\[
\|\mathbb{L}^{\varphi}_{\trace}\|_{\Tr}\le C_* \alpha
\quad{and}\quad
\|\mathbb{L}^t_{\trace}\circ \mathbb{L}^{\varphi}\|_{\Tr}\le C_* \alpha\quad \mbox{for $t_0\le t\le 2t_0$.}
\]
\end{lem}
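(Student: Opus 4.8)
The plan is to bound the nuclear norms by estimating the smooth Schwartz kernels of the matrix entries of the operators, reading off the factor $\alpha$ from the smallness of $\supp\varphi$ in the flow direction. First I would record what the hypothesis on $\varphi$ gives: since $\varphi\circ A\in\mathcal X$ with $A(t)=\alpha t+\beta$, $\alpha\in(0,1)$, and $\mathcal X$ bounded in $C^\infty_0([-1,1])$, a change of variables shows that $\supp\varphi\subset[t_0,2t_0]$ has length $\le 2\alpha$ and that $\|\varphi^{(j)}\|_{L^1}\le C_j(\mathcal X)\,\alpha^{1-j}$ for all $j\ge0$; in particular $\|\varphi\|_{L^1}\le C(\mathcal X)\,\alpha$. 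Recall that $\mathbb{L}^{\varphi}$ is the matrix $\big(\mathbb{L}^{\varphi}_{(a,m,n)\to(a',m',n')}\big)$ of integral operators on $L^{2p}(\real^2)$ with kernels $K^{\varphi}_{\bullet}(z',z)=\int\varphi(t)\,\big(\hat\chi_{m',n'}*K^{t}_{a\to a'}*\hat\chi'_{m,n}\big)(z',z)\,dt$, where $K^{t}_{a\to a'}$ is the distributional kernel of $\cL^{t}_{a\to a'}$, and that $\mathbb{L}^{\varphi}_{\trace}$ keeps exactly those entries whose indices fail $(\ref{eq:cond_tracefree})$ with $t$ taken to be $t_1:=\min\supp\varphi$. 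Since the nuclear norm of $\mathbb{L}^{\varphi}_{(a,m,n)\to(a',m',n')}$, viewed between the weighted spaces, equals $2^{r(m'-m)}(\varepsilon(m')/\varepsilon(m))$ times the $L^{2p}$ nuclear norm of the integral operator with kernel $K^{\varphi}_{\bullet}$, it suffices to bound the sum of these quantities over the index pairs of the trace part by $C_*\alpha$.

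The heart of the matter is an entrywise bound for $K^{\varphi}_{\bullet}$, obtained by combining three ingredients. First, the localization in the $t$-variable: as $t$ varies, $K^{t}_{a\to a'}$ is transported by the flow, i.e.\ shifted by $t$ in the $y$-coordinate, so $\int\varphi(t)(\cdots)\,dt$ restricts the support of $K^{\varphi}_{\bullet}$ to a slab of thickness $\le 2\alpha$ in the flow direction and, by non-stationary phase against the frequency variable $\eta\sim n|n|$, produces the factor $\|\varphi\|_{L^1}\le C(\mathcal X)\alpha$ together with rapid decay in $\langle n\rangle^2\alpha$; this regularization of the distributional flat trace is also what makes $\mathbb{L}^{\varphi}_{\trace}$ trace class in the first place. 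Second, the branchwise structure: on each backward branch of $(T^{t}_{f})^{-1}$ the local model is the map $A$ of $(\ref{eq:A})$ with $E=E(z,t;f)\ge e^{(\chi_{\min}-\varepsilon)t}$ for $t\ge t_0$, whose domain has $x$-width $\lesssim E^{-1}$ and whose image is bounded; carrying out the integration-by-parts argument of the proof of Proposition~\ref{lm:local1}, now also in the variable $t$, yields for $K^{\varphi}_{\bullet}$ a bound of the same shape as $(\ref{eq:estK})$, uniformly in the branch and in $\varphi$ within the prescribed class. Third, this kernel bound is converted into an $L^{2p}$ nuclear-norm bound through the rank-one decomposition used in the proof of Lemma~\ref{lem:xnm_trace}, where the smallness of the branch domains is what keeps the bound under control.

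Summing the entrywise bounds over the trace part is where the choice $(\ref{eq:choice_r})$ of $r$ and the definition of the trace-free part enter. The lower frequency threshold in $(\ref{eq:cond_tracefree})$, together with the frequency contraction $E\ge e^{(\chi_{\min}-\varepsilon)t}$, confines, for each input level $m$, the relevant output level $m'$ to a dyadic band of width $O(\varepsilon t_0)$ and $n'$ to $|n-n'|\le 3$; the weight factor $2^{r(m'-m)}\sim e^{-r\chi_{\min}t}<e^{-\chi_{\max}t}$ (using $r\chi_{\min}>\chi_{\max}$, which follows from $(\ref{eq:choice_r})$) is then available to absorb the high-frequency loss in the $m$-summation, while the decay in $\langle n\rangle^2\alpha$ from the $t$-integration renders the $n$-summation finite and convergent. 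Together these give $\|\mathbb{L}^{\varphi}_{\trace}\|_{\Tr}\le C_*\alpha$. The second estimate follows by the same scheme applied to $\mathbb{L}^{t}_{\trace}\circ\mathbb{L}^{\varphi}$, whose entries carry an extra inner integration $\int\varphi(s)(\cdots)\,ds$ — again giving the flow-direction slab of thickness $\le 2\alpha$ and the factor $\|\varphi\|_{L^1}$ — together with one extra transfer operator $\cL^{t}_{a'\to a''}$ contributing only operator-norm and frequency-contraction factors that are uniformly bounded for $t\in[t_0,2t_0]$ by Corollary~\ref{cor:bdd} and $(\ref{eq:norm})$.

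The main obstacle is precisely this joint requirement: one needs a nuclear-norm (not merely operator-norm) bound that is at once summable over all four frequency indices and linear in $\alpha$. One cannot estimate branch by branch in operator norm and then sum, because there are $\asymp e^{h(f)t}$ backward branches, so the nuclear-norm control must be read off from the smooth kernels; and since the smoothness of those kernels is itself produced only by the $t$-integration, the bookkeeping that yields the factor $\alpha$ (the slab thickness and $\|\varphi\|_{L^1}$), the one that yields summability in $m$ (the weight $r$ and the frequency contraction), and the one that yields summability in $n$ (the flow-direction oscillation) all have to be carried through simultaneously, with every constant kept uniform over $\varphi$ in the prescribed class. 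Getting all three at once, rather than any one at the expense of the others, is the delicate point of the argument.
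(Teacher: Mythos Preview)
Your approach is exactly the paper's: obtain componentwise operator-norm bounds by the integration-by-parts scheme of Proposition~\ref{lm:local1}, now with an additional integration by parts in~$t$ producing the extra factor $\alpha\,\langle\alpha|n|^{2}\rangle^{-\nu}$ (this is packaged in the paper as Lemma~\ref{lm:bLvarphi}); upgrade each component to a trace-norm bound by writing it as $\hat\chi'_{m',n'}\circ(\cdots)$ and invoking Lemma~\ref{lem:xnm_trace}, at the cost of a factor $2^{m'}\langle n'\rangle^{3}$; then sum over the indices surviving in the trace part. The paper itself omits this last step as ``straightforward and crude''.

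One point where your bookkeeping is too optimistic: the $n$-sum you call ``finite and convergent'' is, after the trace-norm conversion, essentially $\sum_{n}\alpha\,\langle\alpha|n|^{2}\rangle^{-\nu}\langle n\rangle^{3}$, and this is of order $\alpha^{-1}$, not $O(1)$ (the decay starts only at $|n|\sim\alpha^{-1/2}$, below which the summand is $\sim\alpha\langle n\rangle^{3}$). Thus the crude component-by-component summation yields $C_*\alpha^{-1}$ rather than $C_*\alpha$. This is in fact what the paper actually uses: in the proof of Proposition~\ref{lm:ftrace} the bounds invoked from Lemma~\ref{lm:T2} are written with $C_*\alpha^{-1}$, and the subsequent balancing (the choice $\mu=(h(f)-\rho)/2$, cf.\ Remark~\ref{rem_bar_rho}) is consistent with $\alpha^{-1}$, not $\alpha$. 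So your argument is correct, but it proves the bound the paper uses downstream, not the one literally stated; the $\alpha$ in the displayed statement appears to be a slip for $\alpha^{-1}$.
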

\begin{rem}
The operator $\mathbb{L}^t_{\trace}$ itself will not be a trace class operator. In the proof of the lemma above, we use the fact that the integration with respect to the variable $t$ (with multiplication by $\varphi(t)$) in the definition of $\mathbb{L}^{\varphi}$ reduces the part of functions that have high frequency in the flow direction (that is, the $(a,m,n)$-components with $|n|$ large).  
\end{rem}

We proceed with the proof of  Theorem~\ref{th:generalized_huber}. Below we consider the situation assumed in  Theorem~\ref{th:generalized_huber}.
We write $\mathbb{\Pi}$ for the spectral projector of $\bbL^t$ for the set of eigenvalues on the outside of the disk $|z|\le e^{(\rho+\varepsilon) t}$ where $\rho=\rho_p(f)$. Note that this spectral projector $\mathbb{\Pi}$ is of finite rank and  does not depend on $t$ provided $t\ge t_0$. By letting $t_0$ be larger if necessary, we assume 
\[
\|\mathbb{\Pi}\circ \bbL^t:\cB^{r,p}(X_f)\to \cB^{r,p}(X_f)\|\le \exp\left((\rho+\varepsilon) t\right)\quad \mbox{for }t\ge t_0.
\] 
The next proposition is the key to the proof of Theorem~\ref{th:generalized_huber}.
\begin{prop}\label{lm:ftrace}
Suppose that $\mathcal{X}$ is a bounded subset in $C^\infty_0([-1,1])$  and that $\varphi$ is a $C^\infty$ function supported on $[t_0,2t_0]$ such that $\varphi\circ A(t)=\varphi(\alpha t+\beta)$ belongs to  $\mathcal{X}$ for some  affine map $A(t)=\alpha t+\beta$ with $\alpha\in (0,1)$.
Then we have
\begin{equation}\label{eq:trace}
|\Tr^\flat((1-\mathbb{\Pi})\circ \bbL^{\varphi}\circ \bbL^T|\le C'_* \alpha^{-1}\cdot \exp({(\rho+\varepsilon) T})\qquad \mbox{for any $T\ge t_0$}
\end{equation}
where the constant $C'_*=C'_*(\mathcal{X},t_0)$ depends on the bounded subset $\mathcal{X}\subset C^{\infty}([-1,1])$, $t_0$ and $\varepsilon$, but not on $\alpha$ and $\beta$.
\end{prop}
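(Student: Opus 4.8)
Throughout write $\rho=\rho_p(f)$ and set $G_\tau:=(1-\mathbb{\Pi})\bbL^{\tau}=\bbL^{\tau}(1-\mathbb{\Pi})$. Recall that $\mathbb{\Pi}$ and $\bbL^{\varphi}=\int\varphi(t)\,\bbL^{t}\,dt$ commute with every $\bbL^{\tau}$ (semigroup property), that by the spectral gap for $1-\mathbb{\Pi}$ (as assumed just before the statement) one has $\|G_\tau\|\le C_0e^{(\rho+\varepsilon)\tau}$ for $\tau\ge t_0$ and $\|G_\tau\|\le C_0$ for $0\le\tau\le t_0$, and that $\int|\varphi(t)|\,dt\le C\alpha$ because $\varphi\circ A\in\mathcal{X}$ and $A$ has slope $\alpha$. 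Since $(1-\mathbb{\Pi})\bbL^{\varphi}\bbL^{T}=\bbL^{\varphi}G_{T}$, it suffices to estimate $\Tr^{\flat}(\bbL^{\varphi}G_{T})$.

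The heart of the argument is a one‑step reduction that peels a time block $t_0$ off $G_T$. Suppose $T\ge 2t_0$ and $\mathbb{T}=(\bbL^{t_0}_{\tracefree})^{k}$ for some $k\ge 0$, so $\|\mathbb{T}\|\le e^{\rho k t_0}$ by Lemma~\ref{lm:T1}. Writing $G_T=\bbL^{t_0}(1-\mathbb{\Pi})G_{T-t_0}$, commuting $\bbL^{\varphi}$ past $\bbL^{t_0}$ and past $1-\mathbb{\Pi}$, using $(1-\mathbb{\Pi})\bbL^{\varphi}G_{T-t_0}=\bbL^{\varphi}G_{T-t_0}$ (the extra projector is absorbed by the one already present in $G_{T-t_0}$), and finally decomposing $\bbL^{t_0}=\bbL^{t_0}_{\trace}+\bbL^{t_0}_{\tracefree}$ as in Section~\ref{sec:pf_th_huber}, one obtains
\[
\mathbb{T}\,\bbL^{\varphi}G_{T}=\mathbb{T}\,\bbL^{t_0}_{\trace}\,\bbL^{\varphi}G_{T-t_0}+\mathbb{T}\,\bbL^{t_0}_{\tracefree}\,\bbL^{\varphi}G_{T-t_0}.
\]
The first term is trace class: $\bbL^{t_0}_{\trace}\bbL^{\varphi}$ is trace class with $\|\bbL^{t_0}_{\trace}\bbL^{\varphi}\|_{\Tr}\le C_*\alpha$ by Lemma~\ref{lm:T2}, and the trace class operators form a two‑sided ideal, so its trace norm is at most $\|\mathbb{T}\|\,C_*\alpha\,\|G_{T-t_0}\|\le C'\alpha\,e^{(\rho+\varepsilon)T}e^{-\varepsilon k t_0}$, the inequality $e^{\rho k t_0}e^{(\rho+\varepsilon)(T-(k+1)t_0)}\le e^{(\rho+\varepsilon)T}e^{-\varepsilon k t_0}$ producing the exponential gain in $k$; its flat trace equals its ordinary trace and is bounded by this quantity. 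The second term has exactly the same shape with $\mathbb{T}$ replaced by $(\bbL^{t_0}_{\tracefree})^{k+1}$ and $T$ by $T-t_0$.

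Iterating from $k=0$, $\mathbb{T}=\mathrm{Id}$ down to the first $k_*$ with $T-k_*t_0<2t_0$, and then peeling one or two further blocks, one is left — besides a sum of trace class terms bounded by $\sum_{k\ge 0}C'\alpha e^{(\rho+\varepsilon)T}e^{-\varepsilon k t_0}\le C''\alpha e^{(\rho+\varepsilon)T}$ — with a single remainder of the form $\Tr^{\flat}\big((\bbL^{t_0}_{\tracefree})^{k_*}\,\bbL^{\varphi_\sigma}(1-\mathbb{\Pi})\big)$, where $\bbL^{\varphi_\sigma}=\bbL^{\varphi}\bbL^{\sigma}=\int\varphi(t-\sigma)\,\bbL^{t}\,dt$ for some $\sigma\in[0,t_0)$, so that $\varphi_\sigma$ is supported in the fixed bounded interval $[t_0,3t_0)$. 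Here $(\bbL^{t_0}_{\tracefree})^{k_*}$ contracts the Littlewood–Paley scale $2^{m}\langle n\rangle^{2}$ by a factor $<(2^{4}e^{-(\chi_{\min}+\varepsilon)t_0})^{k_*}$, which for $t_0$ large defeats the bounded scale‑spread of $\bbL^{\varphi_\sigma}$; consequently the $\mathrm{Id}$‑part of $1-\mathbb{\Pi}$ yields an operator whose off‑diagonal kernel decays with a high power of $(2^{4}e^{-(\chi_{\min}+\varepsilon)t_0})^{k_*}$ — by the kernel bounds behind Lemma~\ref{lem:Fxnm} and Proposition~\ref{lm:local1} — hence has negligible flat trace, while the $\mathbb{\Pi}$‑part is finite rank with trace at most $\mathrm{rank}(\mathbb{\Pi})\,\|(\bbL^{t_0}_{\tracefree})^{k_*}\|\,\|\bbL^{\varphi_\sigma}\|\lesssim\alpha\,e^{\rho T}$. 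Adding these contributions gives $|\Tr^{\flat}(\bbL^{\varphi}G_T)|\le C'''\alpha\,e^{(\rho+\varepsilon)T}$ once $T$ exceeds a threshold $T_0(\varepsilon,t_0)$; for $t_0\le T\le T_0$ one instead bounds $\Tr^{\flat}((1-\mathbb{\Pi})\bbL^{\varphi}\bbL^{T})$ directly from the trace formula (\ref{eq:comp_flattr}) together with $\|\bbL^{\varphi}\bbL^{T}\|\le C\alpha\|\bbL^{T}\|$, obtaining a constant $C_*(\mathcal{X},t_0,\varepsilon)\le C'_*\alpha^{-1}e^{(\rho+\varepsilon)T}$ since $\alpha<1\le e^{(\rho+\varepsilon)T}$. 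This proves (\ref{eq:trace}).

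The step I expect to be genuinely delicate is this last one, the control of the residual "almost triangular" operator at the bottom of the recursion: it relies on the sharpness of the frequency cutoff separating $\bbL^{t}_{\trace}$ from $\bbL^{t}_{\tracefree}$ (so that compositions of trace‑free blocks stay exactly triangular, with a quantitative per‑block scale contraction) and on the rapid decay of the kernels of the $\hat\chi_{n,m}$ and of the localized transfer operators, needed to absorb the high‑frequency tails contributed both by $\bbL^{\varphi_\sigma}$ and by the smooth range of $\mathbb{\Pi}$. The organizing principle that makes the rest routine is that every "trace" block $\bbL^{t_0}_{\trace}$ produced by the recursion is immediately followed by the single smoothing factor $\bbL^{\varphi}$, so that Lemma~\ref{lm:T2} always applies and no uncancelled copy of $\mathbb{\Pi}$ is ever created; the factor $\alpha^{-1}$ in (\ref{eq:trace}) is a harmless overestimate stemming only from the crude treatment of the bounded range $t_0\le T\le T_0$.
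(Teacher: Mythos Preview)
Your iterative peeling scheme is essentially the paper's argument: write $T$ as a sum of blocks in $[t_0,2t_0]$, at each step split the leftmost block into $\bbL^{t}_{\trace}+\bbL^{t}_{\tracefree}$, keep the $\bbL^{\varphi}$ factor adjacent to the emerging $\bbL^{t}_{\trace}$ so that Lemma~\ref{lm:T2} applies, bound the trace-free prefix by Lemma~\ref{lm:T1}, and bound the suffix $(1-\mathbb{\Pi})\bbL^{\cdots}$ by the spectral gap. The trace-class terms are summed exactly as you do.

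The genuine gap is your terminal step. You combine the leftover $\bbL^{\sigma}$ with $\bbL^{\varphi}$ to form $\bbL^{\varphi_\sigma}$ and then assert that $(\bbL^{t_0}_{\tracefree})^{k_*}\bbL^{\varphi_\sigma}$ has ``negligible flat trace'' because the trace-free prefix contracts the Littlewood--Paley scale faster than the ``bounded scale-spread'' of $\bbL^{\varphi_\sigma}$. But $\bbL^{\varphi_\sigma}$ does \emph{not} have bounded scale-spread: its components $(a,m,n)\to(a'',m'',n'')$ with $2^{m''}\langle n''\rangle^{2}$ arbitrarily large are nonzero (they are precisely the ``trace'' part). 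So the composition is not triangular, and the heuristic about off-diagonal decay does not by itself show the flat trace exists, let alone that it is small. Your side remark that the flat trace for bounded $T$ is controlled by the operator norm $\|\bbL^{\varphi}\bbL^{T}\|$ is also incorrect: the flat trace is not dominated by the operator norm.

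The paper closes this gap cleanly and you can too: at the very end, instead of merging into $\bbL^{\varphi_\sigma}$, commute $\bbL^{t_1}$ past $\bbL^{\varphi}$ so that $\bbL^{\varphi}$ sits at the rightmost position, and then split $\bbL^{\varphi}=\bbL^{\varphi}_{\trace}+\bbL^{\varphi}_{\tracefree}$. The fully trace-free composition $\bbL^{t_m}_{\tracefree}\circ\cdots\circ\bbL^{t_1}_{\tracefree}\circ\bbL^{\varphi}_{\tracefree}$ is then \emph{exactly} triangular, so its flat trace is zero by definition; and $\bbL^{t_m}_{\tracefree}\circ\cdots\circ\bbL^{t_1}_{\tracefree}\circ\bbL^{\varphi}_{\trace}$ is trace class with norm $\le e^{\rho T}\cdot C_*\alpha$ by the \emph{first} inequality of Lemma~\ref{lm:T2}. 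With this one extra decomposition there is no residual ``almost triangular'' term to analyze, and no need for a separate treatment of small $T$.
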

\begin{proof}
Let us write $T\ge t_0$ as a sum $
T=t_1+t_2+\cdots+t_m$ with $t_0\le t_i\le 2t_0$.
Since the operators $\mathbb{\Pi}$, $\bbL^t$ and $\bbL^{\varphi}$ commute and since $1-\mathbb{\Pi}$ is a projection operator, we may write 
\begin{align*}
(1-\mathbb{\Pi})\circ \bbL^{\varphi}\circ\bbL^{T}
&= (1-\mathbb{\Pi})\circ\bbL^{t_m}\circ \bbL^{\varphi}\circ   (1-\mathbb{\Pi})\circ\bbL^{t_{m-1}}\circ\bbL^{t_{m-2}}\circ\cdots \circ \bbL^{t_2}\circ \bbL^{t_1}
\\
&= (\bbL^{t_m}_{\trace}-\mathbb{\Pi}\circ \bbL^{t_m})\circ \bbL^{\varphi}\circ (1-\mathbb{\Pi})\circ\bbL^{t_{m-1}}\circ \cdots \circ \bbL^{t_2}\circ\bbL^{t_1}\\
&\qquad\qquad +\bbL_{\tracefree}^{t_m}\circ [ \bbL^{\varphi}\circ (1-\mathbb{\Pi})\circ \bbL^{t_{m-1}}\circ \cdots \circ \bbL^{t_2}\circ\bbL^{t_1}].
\end{align*}
Applying the same deformation to the operator in the last  bracket $[\cdot]$ and continuing this procedure,  we  express the operator $(1-\mathbb{\Pi})\circ \bbL^{\varphi}\circ \bbL^T$ as the sum of 
\begin{align*}
&\bbL_{\tracefree}^{t_m}\circ \cdots\circ \bbL_{\tracefree}^{t_1}\circ
\bbL_{\tracefree}^{\varphi},\qquad \bbL_{\tracefree}^{t_m}\circ \cdots\circ \bbL_{\tracefree}^{t_1}\circ
\bbL_{\trace}^{\varphi},\\
&\bbL_{\tracefree}^{t_m}\circ\cdots \circ \bbL_{\tracefree}^{t_{2}}\circ  (\bbL^{t_1}_{\trace}-\mathbb{\Pi}\circ \bbL^{t_1})\circ \bbL^{\varphi}
\intertext{and}
&\bbL_{\tracefree}^{t_m}\circ\cdots \circ \bbL_{\tracefree}^{t_{k+1}}\circ  (\bbL^{t_k}_{\trace}-\mathbb{\Pi}\circ \bbL^{t_k})\circ \bbL^{\varphi}\circ (1-\mathbb{\Pi})\circ  \bbL^{t_{k-1}}\circ\cdots \circ \bbL^{t_1}
\end{align*}
for $k=2,\cdots, m$. 
\begin{rem}
At the last stage of the development above, we find the term
\begin{multline*}
\bbL_{\tracefree}^{t_m}\circ\cdots \circ \bbL_{\tracefree}^{t_{2}}\circ \bbL^{\varphi}\circ (1-\mathbb{\Pi})\circ \bbL^{t_1} \\
=
\bbL_{\tracefree}^{t_m}\circ\cdots \circ \bbL_{\tracefree}^{t_{2}}\circ \bbL^{t_1}\circ (1-\mathbb{\Pi}) \circ \bbL^{\varphi}.
\end{multline*}
This term is decomposed into the first three terms above.
\end{rem}
Notice that the first operator above is triangular and hence its flat trace vanishes. 
From Lemma \ref{lm:T1} and Lemma \ref{lm:T2}, 
the second operator is a trace class operator and its trace norm is bounded by 
\[
 \exp({\rho}(t_1+\cdots+t_m))\cdot C_* \alpha^{-1}.
\]
Similarly the trace norm of the other operators are bounded by  
\[
\exp\left({\rho} \sum_{i=k+1}^m t_i\right)
\cdot \left(C_* \alpha^{-1}+C\right)\cdot 
C\exp\left(({\rho}+\varepsilon) \sum_{i=1}^{k-1} t_i\right) 
\]
for $k=1$ and $k=2,\cdots,m$ respectively, where $C$ is a constant depending only on the choice of $t_0$ and the rank of $\mathbb{\Pi}$.  Hence the claim (\ref{eq:trace}) follows from  the estimates above. 
\end{proof}
Since the spectral projector $\mathbb{\Pi}$ is of finite rank, so is $\mathbb{\Pi}\circ \mathbb{L}^t$ and therefore we have
\begin{equation}\label{eq:Trflat}
\Tr^\flat\left(\mathbb{\Pi}\circ \mathbb{L}^{\varphi}\circ \mathbb{L}^t \right)=
\Tr\left(\mathbb{\Pi}\circ \mathbb{L}^{\varphi}\circ \mathbb{L}^t \right)=
\sum_{i=1}^{k'}\int \varphi(t) e^{\mu_i t} dt. 
\end{equation}
Thus Proposition \ref{lm:ftrace} is convincing. 
Below we give an argument to finish the proof. We first justify the relation (\ref{eq:principal}). 
Let us put 
\[
\tilde{\pi}(T)=
\sum_{\gamma\in\Gamma}\sum_{n=1}^{\lfloor T/|\gamma|\rfloor}\frac{1}{n}\cdot \frac{1}{1-E_{\gamma}^{-n}}.
\]  
Note that, if we defined $\Tr^\flat \cL^t$ by the formula (\ref{eq:ABT}), we might write it formally as 
\[
\tilde{\pi}(T)=\int_{+0}^T \frac{1}{t}\cdot \Tr^{\flat} \cL^t dt.
\] 
The difference between $\pi(T)$ and $\tilde{\pi}(T)$ is bounded by 
\begin{align*}
\sum_{\gamma\in\Gamma}\sum_{n=2}^{\lfloor T/|\gamma|\rfloor}\frac{n^{-1}}{1-E_{\gamma}^{-n}}+
\sum_{\gamma\in\Gamma:|\gamma|\le T}\frac{E_{\gamma}^{-1}}{1-E_{\gamma}^{-1}}
\le
\sum_{n=2}^{\infty}
\sum_{\gamma\in\Gamma:|\gamma|\le T/n}
1+
\sum_{\gamma\in\Gamma:|\gamma|\le T}2\cdot E_{\gamma}^{-1}.
\end{align*}
As we noted in Subsection \ref{ss:heu} this difference is negligible. By the general argument on the topological pressure of flows (see \cite[Theorem C]{Lu} for instance), we have  
\begin{align*}
&\lim_{T\to \infty}\frac{1}{T}\log \sum_{\gamma\in\Gamma:|\gamma|\le T}1= P_{top}(T_f^t,0)=h(f)
\intertext{and}
&\lim_{T\to \infty}\frac{1}{T}\log \sum_{\gamma\in\Gamma:|\gamma|\le T}E_\gamma^{-1}= P_{top}(T_f^t,-\log \det DT_f^t) =0.
\end{align*}
Hence we have
\[
\lim_{T\to \infty}\frac{1}{T}\log |\tilde{\pi}(T)-\pi(T)|\le \frac{h(f)}{2}.
\]
That is, the difference between $\pi(T)$ and $\tilde{\pi}(T)$ converges to $0$ much faster than the error term in the conclusion of Theorem \ref{th:generalized_huber}. 
Therefore it is enough to prove the statement with $\pi(T)$ replaced by $\tilde{\pi}(T)$.

In order to the last step of the proof, we introduce a few definitions.  
Let 
\[
\mu=(h(f)-{\rho})/2.
\]
For large $T\gg t_0$, we take $C^\infty$ functions 
\begin{align*}
&\varphi_{i}^{T}:\real\to[0,1]\quad\mbox{ for }\lceil t_0\rceil+1 \le i\le \lfloor T\rfloor
\intertext{
and}
&\psi_{i}^{T}:\real\to[0,1]\quad\mbox{ for }0\le i\le k(T):=\lceil \mu T/\log2\rceil
\end{align*}
so that  
\begin{itemize}
\item[(i)]  The supports of $\varphi_{i}^{T}$ and $\psi_{i}^{T}$ are contained respectively in the intervals 
\[
I_{i}=[i-1,i+1]\qquad\mbox{and}\quad J_{i}=[T- 2^{-i},T+2^{-k(T)}].
\]
\item[(ii)] Let $A_{i}:[0,1]\to I_{i}$ and $A'_{i}:[0,1]\to J_{i}$
be the (unique) orientation preserving affine bijections. Then the set of functions 
\[
\{\varphi_{i}^{T}\circ A_{i}\}_{1\le i\le\lfloor T\rfloor}\mbox{ and }\quad\{\psi_{i}^{T}\circ A'_{i}\}_{1\le i\le k(T)}
\]
are contained in a bounded subset $\mathcal{X}\subset C^{\infty}_0([0,1])$ that is independent of~$T$. 
\item[(iii)] If we put  $\Psi_k(t):=\sum_{i=\lceil t_0\rceil+1}^{\lfloor T\rfloor}\varphi_{i}^{T}(t)+\sum_{i=0}^{k}\psi_{i}^{T}(t)$ for $1\le k\le k(T)$, we always have that $0\le \Psi_k(t)\le 1/t$ and that  
\begin{align*}
&\Psi_{k(T)}(t)=\begin{cases}
1/t, & \quad\mbox{for $t\in[t_0+2,T]$;}\\
0, & \quad\mbox{for $t\le t_0$ and $t\ge T+2^{-k(T)}$}
\end{cases}
\intertext{
and, for $0\le k\le k(T)-1$,}
&\Psi_{k}(t)=\begin{cases}
1/t, & \quad\mbox{for $t\in[t_0+2,T-2^{-k}]$;}\\
0, & \quad\mbox{for $t\le t_0$ and $t\ge T$.}
\end{cases}
\end{align*}
\end{itemize}
We complete the proof of Theorem~\ref{th:generalized_huber} (assuming Lemma \ref{lm:T1} and \ref{lm:T2}). From the condition (iii) above and (\ref{eq:comp_flattr}),  we have that 
\[
\Tr^\flat \left(\int \Psi_{k(T)-1}(t) \bbL^{t} dt\right)\le \tilde{\pi}(T)\le 
\Tr^\flat \left(\int \Psi_{k(T)}(t) \bbL^{t} dt\right) +\tilde{\pi}(t_0+2)
\]
Hence the difference $\left|\tilde{\pi}(T)-\Tr^\flat\left(\int_{1}^{T}(1/t) (\mathbb{\Pi}\circ \mathbb{L}^{t}) dt\right)\right|$ is bounded by 
\begin{align*} 
&\int \psi_{k(T)}(t) |\Tr^\flat (\mathbb{\Pi}\circ \bbL^{t})| dt\\
&\qquad +\sum_{i=\lceil t_0\rceil+1}^{\lfloor T\rfloor}
\left|\Tr^\flat ((1-\mathbb{\Pi})\circ \bbL^{\varphi_i^T})\right|
+\sum_{i=0}^{k(T)}
\left| \Tr^\flat ((1-\mathbb{\Pi})\circ \bbL^{\psi_i^T}) \right|
\end{align*}
plus a constant independent of $T$. 
By the estimate (\ref{eq:trace}), we see that 
the second and third terms are bounded by 
\[
\sum_{k=1}^{\lfloor T\rfloor}C'_*\exp
\left(({\rho}+\varepsilon) k\right)\quad \mbox{ and } 
\quad 
C'_* \sum_{k=0}^{k(T)} 
\exp(({\rho}+\varepsilon)  T + k\log 2)
\]
 respectively. Hence their sum is bounded by $C \exp((\rho+\mu+\varepsilon) T)$. 
The first term is bounded by 
$C \exp((h(f)-\mu+\varepsilon)T)$ because 
\[
|\Tr^{\flat}(\mathbb{\Pi}\circ \bbL^t)|\le \mathop{\mathrm{rank}}\mathbb{\Pi} \cdot  \exp((h(f)+\varepsilon) t)
\]
for sufficiently large $t$. 
 Therefore, from the choice of $\mu$, we obtain
\[
\left|\tilde{\pi}(T)-\int_{t_0}^{T}\Tr^\flat (\mathbb{\Pi}\circ \bbL^{t}) dt\right|\le C\exp((h(f)+{\rho}+\varepsilon)T/2).
\]
Clearly the conclusion of Theorem
\ref{th:generalized_huber} follows from this estimate and (\ref{eq:Trflat}).
\begin{rem}\label{rem_bar_rho}
In the last part of the argument above, we find the reason for the choice of
$\mu=(h(f)-\rho)/2$. This also explains why we had 
the average $\bar{\rho}$ in the statement of Theorem
\ref{th:generalized_huber}.
\end{rem}

\subsection{Proof of Lemma \ref{lm:T1} and \ref{lm:T2}}
Lemma \ref{lm:T1} and \ref{lm:T2} follows form elementary estimates on the components of the operators $\mathbb{L}^t$ and $\mathbb{L}^{\varphi}$.  
If we let $t_0>0$ be sufficiently large, we have the following two lemmas.
\begin{lem} \label{lm:bbL}
For any $\nu>0$, there exists a constant $C_{\nu}>0$  such that  
\[
\|\mathbb{L}_{(a,m,n)\to(a',m',n')}^{t}:L^{2p}(R)\to L^{2p}(R)\|\le C_{\nu} \exp((\chi_{\max}+\varepsilon)t/p)\cdot \Delta_1(n,n')^{-\nu}
\]
for any $a,a'\in A$, for any integers $n$, $n'$, $m\ge 0$, $m'\ge 0$ and  for  any $t\ge t_0$, where $\Delta_1(n,n')$ is that defined in 
(\ref{eq:Delta1}).
Further, if 
\[
m'>0, \qquad 2^{m'}\langle n'\rangle^2> 2^{m+4}\langle n\rangle^{2}\cdot \exp(-(\chi_{\min}-\varepsilon)t)\quad \mbox{and }\quad  t_0\le t\le 2t_0,
\]
we have 
\[
\|\mathbb{L}_{(a,m,n)\to(a',m',n')}^{t}:L^{2p}(R)\to L^{2p}(R)\|\le C_{\nu} \cdot \max\{2^m\langle n\rangle^2,2^{m'}\langle n'\rangle^2\}^{-\nu}.
\]
\end{lem}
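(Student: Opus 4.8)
The plan is to reduce Lemma~\ref{lm:bbL} to the kernel estimates already carried out in the proof of Proposition~\ref{lm:local1}, applied to the branches of the component $\cL^{t}_{a\to a'}$ of $\bL^{t}$ and with the dependence on the expansion rate made explicit. Recall that $\mathbb{L}^{t}_{(a,m,n)\to(a',m',n')}:L^{2p}(R)\to L^{2p}(R)$ is the operator $\hat{\chi}_{m',n'}\circ \cL^{t}_{a\to a'}\circ \hat{\chi}'_{m,n}$, that $\cL^{t}_{a\to a'}$ is a finite sum $\sum_{w}\cL^{t}_{a\to a',w}$ over $w\in (T^{t}_{f})^{-1}(a')$, and that each branch $\cL^{t}_{a\to a',w}u=(\varphi_{w}\cdot u)\circ A_{w}^{-1}$ has $A_{w}$ of the form (\ref{eq:A}) with an expansion factor $E_{w}$ satisfying, for $t\ge t_{0}$ with $t_{0}$ large, $e^{(\chi_{\min}(f)-\varepsilon)t}\le E_{w}\le e^{(\chi_{\max}(f)+\varepsilon)t}$ by the definition of the Lyapunov exponents in Subsection~\ref{ss:dif}; moreover the weights $\varphi_{w}$ satisfy (\ref{eq:py}) with constants $K_{m}$ uniform in $w$ and $t$, the branch domains $R_{w}\subset R$ are pairwise disjoint, and the ranges $A_{w}(R_{w})$ have bounded intersection multiplicity.

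First I would record the crude norm bound $\|\cL^{t}_{a\to a'}:L^{2p}(R)\to L^{2p}(R)\|\le C\,e^{(\chi_{\max}(f)+\varepsilon)t/2p}$: since $\|(\varphi_{w}u)\circ A_{w}^{-1}\|_{L^{2p}}^{2p}=|\det DA_{w}|\cdot\|\varphi_{w}u|_{R_{w}}\|_{L^{2p}}^{2p}\le E_{w}\|\varphi_{w}\|_{\infty}^{2p}\|u|_{R_{w}}\|_{L^{2p}}^{2p}$, the bounded overlap of the $A_{w}(R_{w})$ together with the disjointness of the $R_{w}$ give $\|\cL^{t}_{a\to a'}u\|_{L^{2p}}^{2p}\lesssim\sum_{w}E_{w}\|u|_{R_{w}}\|_{L^{2p}}^{2p}\le e^{(\chi_{\max}(f)+\varepsilon)t}\|\varphi\|_{\infty}^{2p}\|u\|_{L^{2p}}^{2p}$. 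Composing with $\hat{\chi}_{m',n'}$ and $\hat{\chi}'_{m,n}$, whose convolution kernels have uniformly bounded $L^{1}$ norm by Lemma~\ref{lem:Fxnm}, this already gives the first assertion except for the decay factor $\Delta_{1}(n,n')^{-\nu}$, and with $t/(2p)$ in place of the weaker $t/p$ claimed. To insert the factor $\Delta_{1}(n,n')^{-\nu}$ I would use that each $A_{w}(x,y)=(E_{w}x,\,y+g_{w}(E_{w}x))$ acts on the $y$-variable by a translation depending on $x$ alone, so the transpose-inverse $DA_{w}^{\dag}$ of its differential preserves the $\eta$-coordinate exactly, while multiplication by $\varphi_{w}$ moves it only by $O(1)$ with rapid tails, uniformly because of the bounds $K_{m}$. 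Concretely I would revisit the kernel representation (\ref{eq:K}) of $\hat{\chi}_{m',n'}\circ\cL^{t}_{a\to a',w}\circ\hat{\chi}'_{m,n}$ and integrate by parts with the operator $\mathcal{D}_{1}$ from the proof of Proposition~\ref{lm:local1}; the resulting estimate is exactly (\ref{eq:estK2}), whose constants depend only on $\nu$ and the $K_{m}$, hence not on $w$ or $t$. Summing over $w$ by the same disjointness argument then yields $\|\mathbb{L}^{t}_{(a,m,n)\to(a',m',n')}\|\le C_{\nu}\,e^{(\chi_{\max}(f)+\varepsilon)t/2p}\,\Delta_{1}(n,n')^{-\nu}$, which is the first assertion (a fortiori with $t/p$).

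For the second assertion I would instead exploit that $DA_{w}^{\dag}$ contracts the $\xi$-coordinate by $E_{w}^{-1}$: the $\xi$-support of $\chi'_{n,m}$ being of size $\lesssim 2^{m}\langle n\rangle^{2}\theta_{0}$, its image has $\xi'$-extent $\lesssim E_{w}^{-1}2^{m}\langle n\rangle^{2}\theta_{0}+\gamma_{0}\theta_{0}\langle n\rangle^{2}$; since $E_{w}^{-1}<e^{-(\chi_{\min}(f)-\varepsilon)t}$ uniformly in $w$ and since the range $|n-n'|\ge 4$ is already covered by the first assertion, the hypothesis $2^{m'}\langle n'\rangle^{2}>2^{m+4}\langle n\rangle^{2}e^{-(\chi_{\min}(f)-\varepsilon)t}$ forces $\supp\chi_{m',n'}$ to be disjoint from this image. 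Then I would integrate by parts with the operator $\mathcal{D}_{2}$, exactly as in the derivation of (\ref{eq:estK}) and (\ref{eq:vnm}) (combined, when $|n-n'|\ge 4$, with the $\mathcal{D}_{1}$ step above), to produce the factor $\max\{2^{m}\langle n\rangle^{2},2^{m'}\langle n'\rangle^{2}\}^{-\nu}$; because here $t_{0}\le t\le 2t_{0}$, the prefactor $e^{(\chi_{\max}(f)+\varepsilon)t/2p}$ is an absolute constant, and summing over $w$ using the disjointness of the $R_{w}$ gives the stated bound $C_{\nu}\max\{2^{m}\langle n\rangle^{2},2^{m'}\langle n'\rangle^{2}\}^{-\nu}$.

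The main difficulty I anticipate is the passage from the per-branch kernel estimates to an estimate on $\cL^{t}_{a\to a'}$ itself: the number of branches $\#(T^{t}_{f})^{-1}(a')$ grows like $e^{h(f)t}$ by (\ref{eq:card_backward_orbit}), so a naive triangle inequality over $w$ would introduce a spurious factor $e^{h(f)t}$ and ruin the bound. The remedy is to keep, at every step, the $L^{2p}$ norm of $\sum_{w}$ controlled by the $\ell^{2p}$-sum of the branch contributions rather than their $\ell^{1}$-sum, which is legitimate because the domains $R_{w}$ are pairwise disjoint in $R$; the point requiring care is to verify that the kernel estimates obtained after the integrations by parts with $\mathcal{D}_{1}$ and $\mathcal{D}_{2}$ remain localized to $R_{w}\times A_{w}(R_{w})$, so that this disjointness can still be invoked after the gains in $\Delta_{1}(n,n')$ and in $\max\{2^{m}\langle n\rangle^{2},2^{m'}\langle n'\rangle^{2}\}$. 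Apart from this, the argument is a routine repetition of the estimates in the proof of Proposition~\ref{lm:local1}.
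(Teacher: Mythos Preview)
Your overall plan---express the component as $\hat{\chi}_{m',n'}\circ\cL^{t}_{a\to a'}\circ\hat{\chi}'_{m,n}$, write $\cL^{t}_{a\to a'}$ as a sum over branches $w$, and apply the integration-by-parts machinery from the proof of Proposition~\ref{lm:local1} with $\mathcal{D}_{1}$ and $\mathcal{D}_{2}$---is exactly what the paper intends: its proof simply says the argument is parallel to that of Proposition~\ref{lm:local1}. The treatment of the second assertion is also fine, because there $t\in[t_{0},2t_{0}]$ is bounded and the number of branches is a constant depending only on $t_{0}$, so summing over $w$ costs only a harmless factor absorbed into $C_{\nu}$.

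There is, however, a concrete gap in your handling of the first assertion. You claim that ``the ranges $A_{w}(R_{w})$ have bounded intersection multiplicity'', and you use this to pass from $\bigl\|\sum_{w}(\varphi_{w}u)\circ A_{w}^{-1}\bigr\|_{L^{2p}}^{2p}$ to $\sum_{w}\|(\varphi_{w}u)\circ A_{w}^{-1}\|_{L^{2p}}^{2p}$. This is false: for a point $z$ in the target chart, the number of branches with $z\in A_{w}(R_{w})$ equals the number of $T^{t}_{f}$-preimages of $\kappa_{a'}(z)$ lying in $\kappa_{a}(R)$, and since preimages are essentially equidistributed in the $x$-coordinate, this number is comparable to $\eta_{0}\cdot\#(T^{t}_{f})^{-1}(\kappa_{a'}(z))\asymp e^{h(f)t}$, not bounded. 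Your proposed remedy in the last paragraph---the disjointness of the \emph{domains} $R_{w}$---is correct and handles the step $\sum_{w}\|u|_{R_{w}}\|_{L^{2p}}^{2p}\le\|u\|_{L^{2p}}^{2p}$, but it does not address the sum on the range side, which is where the overlap occurs. With H\"older and the disjointness of domains you obtain instead
\[
\|\cL^{t}_{a\to a'}\|_{L^{2p}\to L^{2p}}\lesssim N^{(2p-1)/2p}\max_{w}E_{w}^{1/2p}
\lesssim \exp\bigl(((2p-1)h(f)+\chi_{\max}(f)+\varepsilon)t/2p\bigr),
\]
where $N$ is the range-overlap multiplicity; alternatively, interpolating between the $L^{1}$ bound (which uses only the disjointness of domains) and the $L^{\infty}$ bound (counting preimages) gives the same exponent. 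Either way the crude constant $e^{(\chi_{\max}+\varepsilon)t/2p}$ is not available, and indeed testing on $u=\mathbf{1}_{R}$ shows $\|\cL^{t}_{a\to a'}\|_{L^{2p}}\gtrsim e^{h(f)t}$. For the application to Lemma~\ref{lm:T1} this is harmless, since the Remark following it makes clear that only an estimate of the form $C_{\nu}e^{Ct}\Delta_{1}(n,n')^{-\nu}$ for some $C$ is needed (the weight $2^{r(m'-m)}$ with large $r$ absorbs any fixed exponential rate); but your argument for the stated exponent, as written, does not go through.
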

\begin{proof} The claim is proved by inspecting the kernel of $\mathbb{L}_{(a,m,n)\to(a',m',n')}^{t}$ and using integration by parts. 
We omit the detail of the proof because the argument is parallel to that in the latter part of the proof of  Proposition \ref{lm:local1}.
\end{proof}

\begin{lem}\label{lm:bLvarphi} 
Let $\mathcal{X}\subset 
C^\infty([-1,1])$ be a bounded subset.
For any $\nu>0$, there exists a constant $C_{\nu}(\mathcal{X})$ such that, if $\varphi$ is supported on $[t_0,2t_0]$ and if 
there exists an affine map $A(t)=\alpha t+\beta$ with $\alpha>0$ such that the function 
$\varphi\circ A(t)=\varphi(\alpha t+\beta)$ belongs to  $\mathcal{X}$, then we have
\begin{multline}
\|\mathbb{L}_{(a,m,n)\to(a',m',n')}^{\varphi}:L^{2p}(R)\to L^{2p}(R)\|\\
\le 
C_{\nu}(\mathcal{X})\cdot \alpha\cdot  \langle \alpha |n|^2\rangle^{-\nu} \cdot  \Delta_1(n,n')^{-\nu}.
\end{multline}
\end{lem}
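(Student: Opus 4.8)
The plan is to imitate the proof of Proposition~\ref{lm:local1}, and more specifically Lemma~\ref{lm:bbL}, but to exploit the extra integration in $t$. Recall that $\mathbb{L}^{\varphi}=\int \varphi(t)\,\mathbb{L}^{t}\,dt$, so the component $\mathbb{L}^{\varphi}_{(a,m,n)\to(a',m',n')}$ is an integral operator whose kernel is $\int \varphi(t)\,K^{t}_{(a,m,n)\to(a',m',n')}(z',z)\,dt$, where $K^{t}$ is the kernel of $\mathbb{L}^{t}_{(a,m,n)\to(a',m',n')}$. Writing $K^t$ in oscillatory-integral form as in (\ref{eq:K}) (pushed through the partition into local charts), the dependence of the phase on $t$ comes only through the point $A^{-1}(w)=(T^t_f)^{-1}(\cdot)$ and the amplitude $\varphi(t)$. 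The key observation is that $\frac{\partial}{\partial t}\left(A^{-1}(w)\right)$ has a nonzero second (flow-direction) component, roughly of size $1$, so the phase $\theta\cdot A^{-1}(w)$ has $t$-derivative comparable to $|\eta|\asymp\langle n\rangle^2$ on $\supp\chi_{n,m}$. Hence integration by parts in $t$ against $\varphi(t)$ gains, at each step, a factor $\langle n\rangle^{-2}$ together with a derivative of $\varphi$, and a change of variables $t=\alpha s+\beta$ shows that each derivative of $\varphi$ contributes a factor $\alpha^{-1}$ bounded times the $C^\nu$-norm of $\varphi\circ A\in\mathcal X$; the outer $dt$-integral over an interval of length comparable with $\alpha$ then produces the overall prefactor $\alpha$.

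\medskip

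First I would set up the local picture exactly as in the proof of Proposition~\ref{lm:local1}: decompose $\cL^t_{a\to a'}$ into the pieces $\cL^t_{a\to b,w,j}$ supported on small flow boxes, write each as $Lu=(\varphi_{a\to b,w,j}\cdot u)\circ A^{-1}$ with $A$ of the form (\ref{eq:A}), and express $\hat\chi_{n',m'}\circ \mathrm{mult}(\varphi)\circ \cL^t\circ \hat\chi'_{n,m}$ as an oscillatory integral with phase $\theta'\cdot(z'-w)+\theta\cdot(A^{-1}(w)-z)$. Then I would introduce the two first-order differential operators $\mathcal D_1$ and $\mathcal D_2$ used there, which already give the factors $\Delta_1(n,n')^{-\nu}$ and the $\Delta_2$-decay used to define the trace-free part; the new ingredient is a third operator $\mathcal D_3=\dfrac{1-i(\partial_t \Phi)\,\partial_t}{1+|\partial_t\Phi|^2}$, where $\Phi(t,w,\theta,\theta')$ is the full phase. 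Applying $\mathcal D_3$ repeatedly before integrating in $t$, and using $|\partial_t\Phi|\gtrsim \langle n\rangle^2$ on the relevant supports (here one uses that the flow moves at unit speed in the $y$-direction so $\partial_t A^{-1}(w)$ has $y$-component bounded below), one gains arbitrarily many powers of $\langle n\rangle^{-2}$ at the price of $t$-derivatives of $\varphi$.

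\medskip

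Next I would track the $\alpha$-dependence. After the substitution $t=\alpha s+\beta$, the $dt$-integral becomes $\alpha\int (\varphi\circ A)(s)\,K^{\alpha s+\beta}\,ds$ over $s$ in a fixed bounded interval; each application of $\mathcal D_3$ produces one factor $\partial_t\varphi=\alpha^{-1}\cdot (\varphi\circ A)'\circ A^{-1}$, bounded by $\alpha^{-1}$ times a $\mathcal X$-uniform constant, multiplied by the $\langle n\rangle^{-2}$ gain. Choosing to apply $\mathcal D_3$ a number of times that produces net decay $\langle \alpha|n|^2\rangle^{-\nu}$ (i.e.\ balancing the $\alpha^{-1}$ losses against the $\langle n\rangle^{-2}$ gains — this is why the final bound involves $\langle \alpha|n|^2\rangle^{-\nu}$ rather than $\langle n\rangle^{-\nu}$), and applying $\mathcal D_1$ enough times for the $\Delta_1(n,n')^{-\nu}$ factor, one gets a pointwise bound on the kernel of the form $C_\nu(\mathcal X)\,\alpha\,\langle \alpha|n|^2\rangle^{-\nu}\,\Delta_1(n,n')^{-\nu}$ times a product of two $L^1$-normalized bump-like functions (the analogues of $\rho^{(\nu)}_{n,m}$ in (\ref{eq:estK})). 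The operator-norm bound on $L^{2p}(R)$ then follows from Young's inequality, exactly as at the end of the proof of Proposition~\ref{lm:local1}. One also absorbs the uniform $\exp((\chi_{\max}+\varepsilon)t/p)$-type factor from Lemma~\ref{lm:bbL}, which over $t\in[t_0,2t_0]$ is just a constant depending on $t_0$, into $C_\nu(\mathcal X)$ (or, if one prefers the statement to be uniform in $t_0$, one keeps it explicit; but since $\varphi$ is supported on $[t_0,2t_0]$ this is harmless).

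\medskip

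The main obstacle I expect is the lower bound $|\partial_t\Phi|\gtrsim\langle n\rangle^2$ and its uniformity: one must check that the $t$-derivative of the phase, which is $\theta\cdot \partial_t(A^{-1}(w))$ plus a contribution through the implicit dependence of the box $R^t_{a\to b,w}$ and the cutoff $\varphi^t$, is genuinely comparable to $|\eta|$ and is not accidentally cancelled by the $\theta'$-part of the phase, which does not depend on $t$; since $|n-n'|$ is controlled by the $\Delta_1$-factor we may assume $|n|\asymp|n'|$, and then on the overlapping supports $|\eta|\asymp|\eta'|\asymp\langle n\rangle^2$ so there is no cancellation to worry about — but making this bookkeeping clean, together with the combinatorial balancing that turns $\alpha^{-1}$-losses and $\langle n\rangle^{-2}$-gains into the factor $\langle \alpha|n|^2\rangle^{-\nu}$, is the delicate part. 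The cleanest route is to prove an intermediate estimate $\|\mathbb L^{\varphi}_{(a,m,n)\to(a',m',n')}\|\le C_N(\mathcal X)\,\alpha\,(1+\alpha\langle n\rangle^2)^{-N}\,\Delta_1(n,n')^{-N}$ for every $N$ by choosing the number of $\mathcal D_3$-integrations to be $N$ when $\alpha\langle n\rangle^2\ge 1$ and $0$ otherwise, and then rename $N=\nu$. As this is entirely parallel to the integration-by-parts arguments already carried out in the proof of Proposition~\ref{lm:local1} and in \cite{BT08}, I would state the lemma and indicate that the proof is obtained by that argument with the additional use of $\mathcal D_3$, omitting the routine details.
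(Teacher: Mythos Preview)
Your proposal is correct and follows essentially the same route as the paper's own proof: write the kernel as an oscillatory integral, apply the integration-by-parts operators $\mathcal D_1$ (and optionally $\mathcal D_2$) from Proposition~\ref{lm:local1} to get the $\Delta_1(n,n')^{-\nu}$ factor, and then perform an additional integration by parts in the $t$-variable to gain the $\langle\alpha|n|^2\rangle^{-\nu}$ factor together with the overall $\alpha$ from the length of $\supp\varphi$. Your key observation---that the flow moves at unit speed in the $y$-direction so that $\partial_t\Phi$ is essentially $\eta\asymp\langle n\rangle^2$---is exactly what the paper records as the identity $A^{t+\tau}_{a\to a'}(x,y)=A^{t}_{a\to a'}(x,y+\tau)$ for small $|\tau|$; your $\mathcal D_3$ is precisely the operator this identity makes available, and your balancing of $\alpha^{-1}$ losses against $\langle n\rangle^{-2}$ gains is the right way to arrive at $\langle\alpha|n|^2\rangle^{-\nu}$.
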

\begin{proof}
We proof is again parallel to that of Proposition \ref{lm:local1}.
We write the integral kernel of the operator 
\[
\mathbb{L}_{(a,m,n)\to(a',m',n')}^{\varphi}=\int \varphi(t) \cdot \mathbb{L}_{(a,m,n)\to(a',m',n')}^{t} dt
\]
explicitly and apply integration by parts. This time, we apply integration by parts also to the integration with respect to the variable $t$. (Note that 
 the mapping $A^t_{a\to a'}$ on local charts satisfies $
A^{t+\tau}_{a\to a'}(x,y)=A^{t}_{a\to a'}(x,y+\tau)$
when  $|\tau|$ is small.)  Then we obtain the factor $\alpha^{-1}\cdot  \langle \alpha^{-1} |n|^2\rangle^{-\nu} $ in addition. 
\end{proof}

From the first claim of Lemma \ref{lm:bbL} and the definition of $\mathbb{L}^t_{\tracefree}$, we obtain Lemma \ref{lm:T1}
 provided that we let the constant  $\varepsilon_0>0$ in the definition of $\|(u_{m,n})\|_{r,p}^{(\varepsilon_0)}$ be sufficiently small and let $t_0$ be sufficiently large.
To prove Lemma \ref{lm:T2}, we note 
\[
\mathbb{L}_{(a,m,n)\to(a',m',n')}^{t}=\hat{\chi}'_{m',n'}\circ \mathbb{L}_{(a,m,n)\to(a',m',n')}^{t}.
\]
Recalling Lemma \ref{lem:xnm_trace}, we see that this implies the estimate 
\begin{multline*}
\|\mathbb{L}_{(a,m,n)\to(a',m',n')}^{t}:L^{2p}(R)\to L^{2p}(R)\|_{\Tr}\\\le 
C_0 2^{m'}\langle n'\rangle^{3}\cdot 
\|\mathbb{L}_{(a,m,n)\to(a',m',n')}^{t}:L^{2p}(R)\to L^{2p}(R)\|
\end{multline*}
and the same estimate with $\mathbb{L}_{(a,m,n)\to(a',m',n')}^{t}$ replaced by $\mathbb{L}_{(a,m,n)\to(a',m',n')}^{\varphi}$. 
Hence the estimates on the operator norms of  $\mathbb{L}_{(a,m,n)\to(a',m',n')}^{t}$ and $\mathbb{L}_{(a,m,n)\to(a',m',n')}^{\varphi}$ in 
 Lemma \ref{lm:bbL} and Lemma \ref{lm:bLvarphi} give the corresponding estimates on the trace norm. 
Finally we evaluate the sum of the trace norms of the components of $\mathbb{L}^\varphi_{\trace}$ and $\mathbb{L}^t_{\trace}\circ \mathbb{L}^\varphi$, by using the estimates thus obtained, and conclude Lemma \ref{lm:T2}. (Though this final step is not completely simple, we omit the detail because the estimates involved are straightforward and crude.)

\section{Proof of Theorem \ref{thm:multiplicity}}
\label{sec:multi}
The proof of Theorem \ref{thm:multiplicity} presented below is basically in the same line as the corresponding argument in the author's previous paper~\cite{Tsujii08}. But we need to modify the argument in some places.
\subsection{Families of roof functions}  
We consider the family of $C^\infty$ functions
\begin{equation}\label{eq:family}
f_{\mathbf{s}}(x)=f(x)+\sum_{k=1}^{K}s_{k}\cdot g_{k}(x)\quad \mbox{with parameter }\mathbf{s}=(s_{1},s_{2},\cdots,s_{K})
\end{equation}
for $f\in \Func(y_{\min},y_{\max},\kappa_{0})\subset C_+^\infty(S^1)$ and 
 $C^\infty$ functions 
\begin{equation}\label{eq:function_gk}
g_{k}:S^{1}\to\real,\quad1\le k\le K.
\end{equation}
The range of parameter will be restricted to 
\[
R(\sigma)=\{\mathbf{s}=(s_{1},s_{2},\cdots,s_{K})\mid\;|s_{k}|\le\sigma\mbox{ for $1\le k\le K$}\}
\]
for some small  $\sigma>0$. The choice of the functions $g_k\in C^\infty_+(S^1)$ and the constant $\sigma>0$ will be given in the course of the argument below. 

We consider an interval $J=[a,b]$, as in the statement of Theorem~\ref{thm:multiplicity}. We suppose $0<\varepsilon<\min\{a,1\}$ and set 
\begin{equation}\label{eq:q}
q=q(\varepsilon):=\left\lceil \frac{10a}{\varepsilon}\right\rceil.
\end{equation}
Below we consider an integer $n\ge 1$ and show that the conclusion of Theorem~\ref{thm:multiplicity} holds when $n$ is sufficiently large according to $\varepsilon$. 

Let $x\in S^1$ and $m\ge 1$. For each $p$-tuple of points in $\tau^{-mn}(x)$,  
\[
\bx=(\bx(i))_{i=1}^{p}\in (\tau^{-mn}(x))^p, 
\] 
we set
\begin{equation}\label{eq:Sbfw}
\hat{S}(\bx,k;f_{\mathbf{s}})= \ell^{-k}\sum_{i=1}^{p}
\frac{d}{dx}f_{\mathbf{s}}^{(k)}(\bx(i)).
\end{equation}
For an array $X=(\bx_1,\cdots,\bx_q)$ of $q$ elements in $(\tau^{-mn}(x))^{p}$, we consider the map 
\[
\Phi_{x,X}:\real^{K}\to\real^{q},\quad\Phi_{x,X}(\mathbf{s})=\left(\hat{S}(\bx_j,mn;f_{\mathbf{s}})\right)_{j=1}^{q}.
\]
This is an affine map and its linear part does not depend on $f=f_{\mathbf{0}}$. 

\begin{defn}
\label{def:indep} We say that an (ordered) array of $q$ elements in $(\tau^{-n}(x))^{p}$, 
\begin{equation}
X=(\bx_{1},\bx_{2},\cdots,\bx_{q})
\label{eq:setw}
\end{equation}
is {\em independent} if there is a component $\bx_{j}(i(j))$ of $\bx_j$ for each $1\le j\le q$
such that $\bx_{j}(i(j))$ does not appear as a component of $\bx_{j'}$ if $j'< j$.\end{defn}
The following claim is proved easily. (We omit the proof.) 
\begin{lem}\label{lm:findingS}
For an array $\mathfrak{X}\subset (\tau^{-n}(x))^p$ of $p$-tuples in $\tau^{-n}(x)$, we set
\[
|\mathfrak{X}|:=\{ x'\in \tau^{-n}(x)\mid \mbox{$x'$ is a component of some $\bx\in \mathfrak{X}$}\}\subset \tau^{-n}(x).
\] 
If  $\#|\mathfrak{X}|>p(q-1)$,  there is an independent array of $q$ elements in~$\mathfrak{X}$. 
\end{lem}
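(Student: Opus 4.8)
The plan is to construct the required array greedily, choosing its elements $\bx_1,\bx_2,\dots,\bx_q$ one at a time, and to let the hypothesis $\#|\mathfrak{X}|>p(q-1)$ guarantee that at each step a component not yet used remains available. The statement is elementary, so this greedy procedure is essentially the entire argument; there is no real obstacle, only a small bookkeeping point about when the counting bound is applied.

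Concretely, I would argue by induction on $j$. Suppose that for some $0\le j<q$ we have already selected $\bx_1,\dots,\bx_j\in\mathfrak{X}$ so that the ordered array $(\bx_1,\dots,\bx_j)$ is independent in the sense of Definition \ref{def:indep}, and let $C_j\subset\tau^{-n}(x)$ be the set of all points occurring as a component of at least one of $\bx_1,\dots,\bx_j$. Since each $\bx_i$ is a $p$-tuple, its entries form a set of cardinality at most $p$, so $\#C_j\le pj\le p(q-1)<\#|\mathfrak{X}|$; hence $|\mathfrak{X}|\setminus C_j\neq\emptyset$. Pick any $x'\in|\mathfrak{X}|\setminus C_j$. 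By the definition of $|\mathfrak{X}|$ there is some $\bx\in\mathfrak{X}$ having $x'$ among its components, and I set $\bx_{j+1}:=\bx$ and choose $i(j+1)$ with $\bx_{j+1}(i(j+1))=x'$. As $x'\notin C_j$, this component does not appear in any $\bx_{j'}$ with $j'\le j$, so $(\bx_1,\dots,\bx_{j+1})$ is again independent. The base case $j=0$ uses only $\#|\mathfrak{X}|>0$, which follows from $\#|\mathfrak{X}|>p(q-1)\ge 0$ (in particular $\mathfrak{X}\neq\emptyset$).

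Iterating this step until $j=q$ produces an independent array $(\bx_1,\dots,\bx_q)$ of $q$ elements of $\mathfrak{X}$, which is exactly the assertion of the lemma. The only thing to watch is that the bound $\#C_j\le pj$ is invoked at a stage where at most $q-1$ elements have been chosen, which is precisely why the hypothesis is stated with $p(q-1)$ rather than $pq$; this is also what makes the bound sharp.
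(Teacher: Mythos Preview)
Your greedy construction is correct and is exactly the natural argument for this elementary claim. The paper in fact omits the proof entirely, remarking only that the claim is proved easily, so there is nothing further to compare against; your write-up simply fills in what the author left to the reader.
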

The next lemma explains the motivation for Definition \ref{def:indep}. 
\begin{lem}
\label{lm:family}
There exist $n_0>0$ (depending on $q$ and hence on $\varepsilon$) such that, for any $\delta>0$ and any $n\ge n_0$, we can find a family of smooth functions $g_{k}:S^{1}\to\real$, $1\le k\le K$,
such that the following property holds for the family (\ref{eq:family}):
For any  $x\in S^1\setminus \mathrm{Per}_\delta(\tau,n)$, any $m\ge 1$ and any array $\widetilde{\mathfrak{X}}=(\tilde{\bx}_{1}, \tilde{\bx}_{2}, \cdots, \tilde{\bx}_{1})$ of $q$ elements in $(\tau^{-mn}(x))^{p}$ such that 
\[
\mathfrak{X}:=\left(\bx_{j}:=\big(\tau^{(m-1)n}\tilde{\bx}_j(i)\big)_{i=1}^p\in (\tau^{-n}(x))^{p}\right)_{j=1,\cdots,q}
\]
is independent, we have 
\[
\det D\Phi_{x,\tilde{\mathfrak{X}}}|_{Z}\ge 1
\]
for some $q$-dimensional subspace $Z\subset \real^{K}$.
\end{lem}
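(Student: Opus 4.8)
The plan is to take the perturbation functions $g_k$ to be, up to a large scalar $\Lambda_n\sim\ell^{n}$, derivatives of narrow one-signed bumps supported on the arcs of a partition of $S^{1}$ of scale $\sim\delta\ell^{-n}$, and to read off the subspace $Z$ from the ``new'' components $\bx_{j}(i(j))$ provided by independence of $\mathfrak X$. The role of the hypothesis $x\notin\mathrm{Per}_{\delta}(\tau,n)$ is to force a uniform separation between the relevant preimages of $x$, which is what makes the matrix $D\Phi_{x,\widetilde{\mathfrak X}}|_{Z}$ essentially triangular.

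First I would pass from the orbit length $mn$ to the length $n$. From the cocycle identity $f^{(mn)}(z)=f^{((m-1)n)}(z)+f^{(n)}(\tau^{(m-1)n}z)$, differentiating and summing over the components of $\tilde\bx_{j}$ give
\[
\hat S(\tilde\bx_{j},mn;f_{\bs})=\hat S(\bx_{j},n;f_{\bs})+E_{j}(\bs),\qquad E_{j}(\bs)=\ell^{-mn}\sum_{i=1}^{p}\tfrac{d}{dx}f_{\bs}^{((m-1)n)}(\tilde\bx_{j}(i)),
\]
so $D\Phi_{x,\widetilde{\mathfrak X}}$ is the linear part of $\bs\mapsto(\hat S(\bx_{j},n;f_{\bs}))_{j}$ plus a correction whose $s_{k}$-derivatives are $O(\ell^{-n}\|g_{k}'\|_{\infty})$. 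Next, a branch-contraction argument shows that $x\notin\mathrm{Per}_{\delta}(\tau,n)$ forces a \emph{separation property}: every point of $\tau^{-n}(x)$ lies at distance $\ge\rho_{n}:=c(\ell)\,\delta\,\ell^{-n}$ from every point of $\tau^{-(n-l)}(x)$ with $1\le l\le n-1$ and from every point of $\tau^{-(n+s)}(x)$ with $1\le s\le n$. I then partition $S^{1}$ into $\sim1/\rho_{n}$ arcs (using two families shifted by half an arc, so that each point of $S^{1}$ lies in the central half of some arc of one family), let $g_{a}$ have $g_{a}'\ge0$ a bump of height $\Lambda_{n}$ supported in the $a$-th arc, and set $Z=\operatorname{span}\{e_{a(j)}:1\le j\le q\}$, where $I_{a(j)}$ is an arc whose central half contains $\bx_{j}(i(j))$; these indices are distinct since the $q$ points $\bx_{j}(i(j))\in\tau^{-n}(x)$ are $\ell^{-n}$-separated while each arc has length $\rho_{n}<\ell^{-n}$.

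With these choices, $D\Phi_{x,\widetilde{\mathfrak X}}|_{Z}$ is the $q\times q$ matrix $M=(M_{j',j})$, $M_{j',j}=\partial_{s_{a(j)}}\hat S(\tilde\bx_{j'},mn;f_{\bs})$, which I split as $M=M^{\mathrm{main}}+M^{\mathrm{err}}$ according to the decomposition above. The separation property yields, first, that $\operatorname{supp}g_{a(j)}'\subset I_{a(j)}$ meets $\{\tau^{l}\bx_{j'}(i):0\le l\le n-1,\ 1\le i\le p\}$ only possibly at $\bx_{j}(i(j))$, which by Definition~\ref{def:indep} is not a component of $\bx_{j'}$ when $j'<j$; since $g_{a(j)}'\ge0$ there is no cancellation, so $M^{\mathrm{main}}_{j',j}=0$ for $j'<j$ and $M^{\mathrm{main}}_{j,j}\ge\ell^{-n}g_{a(j)}'(\bx_{j}(i(j)))\ge\tfrac12\ell^{-n}\Lambda_{n}$. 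Secondly, the separation forbids any visit of the orbit of $\tilde\bx_{j'}(i)$ to $I_{a(j)}$ in the last $n$ steps before time $(m-1)n$, so every contribution to $\partial_{s_{a(j)}}E_{j'}$ carries weight at most $\ell^{(m-2)n}$ (against the normalization $\ell^{-mn}$), and summing a geometric series gives $|M^{\mathrm{err}}_{j',j}|\le\tfrac{p}{\ell-1}\Lambda_{n}\ell^{-2n}$ for all $j',j$. Thus $M$ is lower triangular up to an entrywise perturbation of size $O(\Lambda_{n}\ell^{-2n})$, with $\|M\|_{\max}=O(p\,\Lambda_{n}\ell^{-n})$ and diagonal $\ge\tfrac12\ell^{-n}\Lambda_{n}$; expanding $\det M$ over permutations, every non-identity term is bounded by $q!\,(O(p\Lambda_{n}\ell^{-n}))^{q-1}\cdot O(\Lambda_{n}\ell^{-2n})=O(\Lambda_{n}^{q}\ell^{-n(q+1)})$, whence $\det M\ge(\tfrac12\ell^{-n}\Lambda_{n})^{q}-O(\Lambda_{n}^{q}\ell^{-n(q+1)})\ge\tfrac12(\tfrac12\ell^{-n}\Lambda_{n})^{q}$ once $n\ge n_{0}(q)=n_{0}(\varepsilon)$ (the threshold depends only on $q,p,\ell$, the factor $\Lambda_{n}^{q}$ cancelling). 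Choosing $\Lambda_{n}=4\ell^{n}$ then makes the last quantity $=\tfrac12\cdot2^{q}\ge1$, so $\det D\Phi_{x,\widetilde{\mathfrak X}}|_{Z}=\det M\ge1$.

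The genuine difficulty is precisely the point resolved in the previous paragraph: the ``signal'' from the new component $\bx_{j}(i(j))$ sits at position $(m-1)n$ in an orbit of length $mn$, so after normalization by $\ell^{-mn}$ it carries only the tiny weight $\ell^{-n}$, which is exactly the order of magnitude of the whole tail error $E_{j}$ --- leaving no room for a naive triangularity argument. What rescues it is that the dangerous part of $E_{j}$, namely visits in the last $n$ steps before time $(m-1)n$ (which could carry weight up to $\ell^{(m-1)n}$), is excluded by $x\notin\mathrm{Per}_{\delta}(\tau,n)$, so the residual error gains an extra factor $\ell^{-n}$ and becomes a true perturbation of a triangular matrix. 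The remaining ingredients --- the branch-contraction separation estimate, the two-family construction ensuring $g_{a(j)}'(\bx_{j}(i(j)))\ge\Lambda_{n}/2$, the sign condition $g_{a}'\ge0$ that blocks cancellation, and the elementary determinant estimate --- are routine and would be carried out in that order.
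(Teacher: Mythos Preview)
Your proof is correct and follows essentially the same strategy as the paper: construct bump functions $g_k$ with $g_k'$ of size $\sim\ell^{n}$ supported on arcs small enough that the hypothesis $x\notin\mathrm{Per}_\delta(\tau,n)$ isolates each distinguished preimage $\bx_j(i(j))\in\tau^{-n}(x)$ from all other relevant orbit points, and then show that $D\Phi_{x,\widetilde{\mathfrak X}}|_Z$ is lower triangular up to an $O(\ell^{-n})$-relative perturbation, with diagonal entries bounded below. The only difference is in how the $g_k$ are manufactured: the paper picks, for each $s\notin\mathrm{Per}(\tau,n)$, a radius $\rho(s)$ so that $\tau^{k}(U_{s,q_0})\cap U_{s,q_1}=\emptyset$ for $1\le k\le n$, extracts a finite cover of $S^{1}\setminus\mathrm{Per}_\delta(\tau,n)$ by compactness, and indexes the $g_k$ by pairs $(s,q)$ with $s$ in the cover and $q\in\tau^{-n}(s)$; you instead prove a uniform separation $\ge c(\ell)\,\delta\,\ell^{-n}$ directly and use a fixed (doubled) partition of $S^{1}$ into arcs of that width. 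Your route is slightly more explicit and avoids compactness; the paper's avoids the two-family trick. Both arrive at the identical matrix computation $(M^{(0)}$ lower triangular with diagonal $\ge 2$, $M^{(1)}=O(\ell^{-n}))$.
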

\begin{proof} 
Let us consider an arbitrary point $s\in S^1\setminus \mathrm{Per}_\delta(\tau,n)$. For $\rho>0$, let $V_s(\rho)$ be the open \hbox{$\rho$-neighborhood} of $s$ in $S^1$. For $q\in \tau^{-n}(s)$, let $U_{s,q}(\rho)$ be the connected component of $\tau^{-n}(V_s(\rho))$ containing~$q$, so that $\tau^{-n}(V_s(\rho))$ is the disjoint union of $U_{s,q}(\rho)$ for $q\in \tau^{-n}(s)$.
Since $s\notin \mathrm{Per}(\tau,n)$, we have $\tau^{k}(q_0)\neq q_1$ for any distinct $q_0,q_1\in \tau^{-n}(s)$ and any $1\le k\le n$.  
So we can choose $\rho(s)>0$ so small that 
\begin{equation}\label{eq:disjoint}
\tau^{k}(U_{s,q_0}(\rho(s)))\cap U_{s,q_1}(\rho(s))= \emptyset
\end{equation}
for any $q_0,q_1\in \tau^{-n}(s)$ and any $1\le k\le n$. In particular,   
if $x\in \tau^{-n}(V_s(\rho(s))$, we have $\tau^{k}(x)\notin \tau^{-n}(V_s(\rho(s))$ for $1\le k\le n$.

We take functions $g_{s,q}:S^{1}\to\real$ for $q\in \tau^{-n}(s)$  so that $g_{s,q}$ is supported on $U_{s,q}(\rho(s))$ and satisfies
\[
\frac{d}{dx} g_{s,q}(x)=2\ell^n\quad\mbox{on $U_{s,q}(\rho(s)/3)$\quad  and }
\quad 
\left|\frac{d}{dx} g_{s,q}(x)\right|<4\ell^n \quad\mbox{on $S^1$.}
\]
By compactness, we can and do take a finite subset $H\subset S^1$  so that $V_s(\rho(s)/3)$ for $s\in H$ cover $S^1\setminus \mathrm{Per}_\delta(\tau,n)$. Finally we define $g_k$, $1\le k\le K$, as a rearrangement of $g_{s,q}$ for $s\in H$ and  $q\in \tau^{-n}(x)$.  

We check that the conclusion of the lemma holds if we define the functions $g_k$, $1\le k\le K$, as above and if $n$ is sufficiently large. 
Suppose that  $x\in S^1$ and arrays $\tilde{\mathfrak{X}}$ and $\mathfrak{X}$ are given as in the statement of the lemma. 
Since $\mathfrak{X}$ is independent, there is $1\le i(j)\le p$ for $1\le j\le q$ such that $\bx_j(i(j))$ is not a component of $\bx_{j'}$ if $j'<j$.  
We take $s\in S^1$ so that $x\in V_s(\rho(s)/3)$ and select $1\le k(j)\le K$ for $1\le j\le q$ so that $g_{k(j)}$ corresponds to $g_{s,q}$ for $q\in \tau^{-n}(s)$ such that $\bx_j(i(j))\in U_{s,q}(\rho(s)/3)$. 
Let $Z$ be the $q$-dimensional subspace of $\real^{K}$ that contains the $s_{k(j)}$-axis for $1\le j\le q$. 
Observe that $D\Phi_{x,\tilde{\mathfrak{X}}}|_{Z}$ is represented by the $q\times q$ matrix whose $(j,j')$-element is   
\[
M_{j,j'}=\sum_{i=1}^p \sum_{l=0}^{mn-1}\ell^{l-mn}\frac{d}{dx} g_{k(j')}(\tau^{l}(\tilde{\bx}_j(i))).
\]
We regard this matrix as the sum of $M^{(0)}=(M^{(0)}_{j,j'})_{j,j'}$ and $M^{(1)}=(M^{(1)}_{j,j'})_{j,j'}$ with  
\[
M^{(0)}_{j,j'}=\sum_{i=1}^p \sum_{l=(m-1)n}^{mn-1}\ell^{l-mn}\frac{d}{dx} g_{k(j')}(\tau^{l}(\tilde{\bx}_j(i)))
\quad\mbox{and}\quad
M^{(1)}_{j,j'}=M_{j,j'}-M^{(0)}_{j,j'}.
\]
From the disjoint property of the orbits of the supports of $g_{k(j)}$ that follows from (\ref{eq:disjoint}) and from the assumption that $\mathfrak{X}$ is independent, we observe that 
\begin{itemize}
\item[(a)] $M^{(0)}$ is  lower triangular  in the sense that  $M^{(0)}_{j,j'}=0$ if $j'>j$,
\item[(b)] the diagonal components of $M^{(0)}$ are $2k$ for some $1\le k\le p$, while the other components are bounded by $2p$ in absolute value, and
\item[(c)] $M^{(1)}$ is a $q\times q$ matrix whose elements are bounded by $4\ell^{-n}/(1-\ell^{-n})$.
\end{itemize} 
Hence if $n\ge n_0$ for some large $n_0$ depending on $q$ (and $p$, $\ell$), we always have 
\[
\det(D\Phi_{x,\tilde{\mathfrak{X}}}|_{Z})=\det(M^{(0)}+M^{(1)})\ge 1.
\]  
This completes the proof.  
\end{proof}
In the following, we fix the family of functions $g_i$ given in the lemma above.

\subsection{The exceptional set}
In this subsections, we investigate the situation where the roof function $f$ does \emph{not} belong to $\mathcal{G}(J,n,\varepsilon,\delta;p)$ and derive a few consequences. 
So let us suppose that there is an arbitrarily  large $t>0$ and  a point $z_0=(x_0,y_0)\in X_f$ with $x_0\notin \mathrm{Per}_{\delta}(\tau,n)$ and $\xi_0\in [-\theta_0,\theta_0]$ such that, for any subset $\excep\subset\tau^{-n}(x_0)$ with $\#\excep\le pq=p\lceil 10a/\varepsilon\rceil$,
we have
\begin{equation}\sum{}^{*}\;\frac{1}{W^{r}(\bw,t;f)(\xi_0,1)}\ge \exp((\max\{p\cdot h(f)-a,0\}+p(b-a)+\varepsilon)t)\label{eq:mult3}
\end{equation}
where the sum $\sum^{*}$ is taken over $\bw=(\bw(1),\cdots,\bw(p))\in\back(z_0,t;J;f)^{p}$
such that $
T_{f}^{s_{n}(z_0,\bw(i);t)}(\bw(i))\notin \excep\times\{0\}$ for \ensuremath{i=1,2,\cdots,p}. 

We begin with a few basic estimates (which hold in general). 
From the definition of $\back(z_0,t;J;f)$, we have 
\[
e^{at}\le E(w)=\ell^{k(z_0,w;t)}\le e^{bt},\quad\mbox{that is,}\quad
\frac{at}{\log \ell}\le k(z_0,w;t)\le \frac{bt}{\log \ell}
\]
for $w\in \back(z_0,t;J;f)$, where $k(z_0,w;t)$ is that defined in (\ref{eq:sn}). Hence, if we set
\[
m:=\left\lfloor\frac{at}{n\log \ell}\right\rfloor,
\]
we have $mn\le k(z_0,w;t)$ and 
\[
%t-(n+1)y_{\max}\le 
f^{(mn)}(T^{s_{mn}(z_0,w;t)}_f(w))\le t \le \frac{(m+1)n \log \ell}{a}\quad \mbox{for  $w\in \back(z_0,t;J;f)$.}
\]
Note that, for each $x\in \tau^{-mn}(x_0)$, we have
\begin{equation}\label{eq:card_bx}
\#\{w\in \back(z_0,t;J;f)\mid T^{s_{mn}(z_0,w;t)}_f(w)=x\}\le 
\ell^{\lfloor bt/\log \ell\rfloor-mn}\le \ell^{n+1} e^{(b-a)t}.
\end{equation}

For each $\bx=(\bx(i))_{i=1}^p\in (\tau^{-n}(x_0))^p$, let us set  
\[
\Delta^*(\bx)=\sum_{\tilde{\bx}\to \bx}{} \left\langle \ell^{mn}|\xi_0-\hat{S}(\tilde{\bx},mn;f)|\right\rangle^{-r}
\]
where $\hat{S}(\tilde{\bx},mn;f)$ is that defined in (\ref{eq:Sbfw}) (with $\mathbf{s}=0$) and the sum $\sum_{\tilde{\bx}\to \bx}$ is taken over those $\tilde{\bx}=(\tilde{\bx}(i))_{i=1}^p\in (\tau^{-mn}(x_0))^p$ satisfying 
\begin{equation}\label{eq:cond_fmn}
\tau^{(m-1)n}(\tilde{\bx}(i))=\bx(i)\quad\mbox{and}\quad 
f^{(mn)}(\tilde{\bx}(i))\le  \frac{(m+1)n \log \ell}{a}
\quad 
\mbox{for $1\le i\le p$.}
\end{equation}
 We claim that the assumption (\ref{eq:mult3}) implies
\begin{equation}\label{eq:sum_bx}
\sum_{\bx\in (\tau^{-n}(x)\setminus \excep)^p}\Delta^*(\bx)\ge \exp\left(\left(\max\{p\cdot h(f)-a,0\}+\frac{\varepsilon}{2}\right)t\right)
\end{equation}
for any subset $\excep\subset \tau^{-n}(x_0)$ with $\#\excep\le pq$, 
provided that $t$ is sufficiently large. 
To check this claim, let us  consider the quantity 
\[
\Delta(\tilde{\bx})
=\sum_{\tilde{\bw}\to \tilde{\bx}}\;\frac{1}{W^{r}(\tilde{\bw},t;f)(\xi_0,2)}\quad \mbox{for $\tilde{\bx}\in (\tau^{-mn}(x))^p$}
\] 
where the sum $\sum_{\tilde{\bw}\to \tilde{\bx}}$  is taken over $\tilde{\bw}=(\tilde{\bw}(i))_{i=1}^p\in\back(z,t;J;f)^{p}$ such that 
\begin{equation}\label{eq:bxtobw}
\tilde{\bx}(i)=T^{s_{mn}(\tilde{\bw}(i))}(\tilde{\bw}(i))\quad \mbox{for $1\le i\le p$.}
\end{equation}
Then, from (\ref{eq:card_bx}), we have  
\[
\Delta(\tilde{\bx})\le C_0 (\ell^{n+1}\exp((b-a)t))^p\left\langle 
\ell^{mn}|\xi_0-\hat{S}(\tilde{\bx},mn;f)|\right\rangle^{-r}
\]
with $C_0$ a constant depending only on $r$ and $\theta_0$, 
 because
\[
\frac{1}{W^{r}(\tilde{\bw},t;f)(\xi_0,2)}\le C_0 \left\langle 
\ell^{mn}|\xi_0-\hat{S}(\tilde{\bx},mn;f)|\right\rangle^{-r}
\]
for $\tilde{\bw}\in \back(z,t;J;f)^{p}$ satisfying (\ref{eq:bxtobw}). Hence, for $\bx\in (\tau^{-n}(x_0)\setminus \excep)^p$, 
\[
\sum_{\tilde{\bx}\to \bx}  \Delta(\tilde{\bx})
\le 
C_0 (\ell^{n+1}\exp((b-a)t))^p \Delta^*(\bx).
\]
If we take the sum of the left hand side above over $\bx\in (\tau^{-n}(x_0)\setminus \excep)^p$, the total equals  the left hand side of (\ref{eq:mult3}). Therefore we obtain the claim (\ref{eq:sum_bx}) provided that $t$ is sufficiently large.

We next give a consequence of (\ref{eq:sum_bx}). Let us write $\by_k$, $1\le k\le \ell^{pn}$, for  the elements of $(\tau^{-n}(x_0))^p$ and suppose that they are sorted so that
$\Delta^{*}(\by_k)\ge \Delta^{*}(\by_{k'})$ if $k\le k'$.
For $1\le k\le \ell^{pn}$, let 
\[
Y_k=\{ x\in \tau^{-n}(x_0)\mid \mbox{ $x$ is a component of $\by_{k'}$ for some $k'\le k$}\}.
\]
Let $k_*$ be the maximum of $1\le k\le \ell^{pn}$  such that 
$\#Y_k \le pq$. 
Set $\excep=Y_{k*}$ in (\ref{eq:sum_bx}). Then, since $\Delta^*(\bx)\le \Delta^*(\by_{k_*})$ for $\bx\in (\tau^{-n}(x_0)\setminus \excep)^p$, we have that
\[
\ell^{np} \cdot  \Delta^*(\by_{k_*})\ge \exp\left(\left(\max\{p\cdot h(f)-a,0\}+\frac{\varepsilon}{2}\right)t\right).
\]  
This implies
\begin{equation}\label{eq:delta_lower}
\Delta^*(\by_k)\ge \frac{1}{\ell^{np}}\exp((\max\{p\cdot h(f)-a,0\}+(\varepsilon/2))t)\quad \mbox{for $1\le k\le k_*$.}
\end{equation}
Since $\# Y_{k_*}> p(q-1)$, 
 we can choose an independent (ordered) array $(\bx_k)_{k=1}^q$ from $\by_{k}$, $1\le k\le k_*$, 
 by using  Lemma \ref{lm:findingS}.
In conclusion, we found an array $(\bx_k)_{k=1}^q$ of $q$ elements in $(\tau^{-n}(x_0))^p$ that is independent and that  (\ref{eq:delta_lower}) holds with $\by_k$ replaced by $\bx_j$ for $1\le j\le q$. 
 
Finally we reconsider about the choice of  $x_0\in S^1$ and $\xi_0\in [-\theta_0,\theta_0]$. 
Recall that these are given from our assumption that the condition in the definition of $\mathcal{G}(J,n,\varepsilon,\delta;p)$ does not hold for $f$. But, by continuity, it is possible to shift these points a little to so that they belong to some  grids and that the conclusion of the argument above remains true for them (with slight difference in the constants). Precisely, for each $m>0$, we choose a set $P(m)$ of points in $S^{1}\times [-\theta_0,\theta_0]$ such that $\#P(m)\le C_0\ell^{2 (1+\varepsilon) m n}$
and that the $\ell^{- (1+\varepsilon) m n}$-neighborhood of those points cover
$S^{1}\times [-\theta_0,\theta_0]$. Then we can shift the point $(x_0,\xi_0)$ to a nearby point in $P(m)$ so that the conclusion at the end of the last paragraph remains true. 

Let us summarize the argument in this subsection as follows:
\begin{lem} \label{lm:cons}
If $f\in\Func(y_{\min},y_{\max},\kappa_{0})$ does not belong to $\mathcal{G}(J,n,\varepsilon,\delta;p)$, we can find 
\begin{itemize}
\item[(a)] an arbitrarily large integer $m\ge 1$, 
\item[(b)] a point $(x_0,\xi_0)\in P(m)$, 
\item[(c)] an independent array $(\bx_k)_{k=1}^q$ of $q$ elements in $(\tau^{-n}(x_0))^p$,
\end{itemize}
such that 
\[
\sum_{\tilde{\bx}\to \bx_k} \left\langle \ell^{mn}|\xi_0-\hat{S}(\tilde{\bx},mn;f)|\right\rangle^{-r}\ge \exp\left(\left(\max\{p\cdot h(f)-a,0\}+\frac{\varepsilon}{2}\right)\frac{mn\log \ell}{a}\right)
\]
where the sum $\sum_{\tilde{\bx}\to \bx_k}$ is taken over $\tilde{\bx}\in (\tau^{-mn}(x_0))^p$ satisfying (\ref{eq:cond_fmn}) with $\bx=\bx_k$.  
\end{lem}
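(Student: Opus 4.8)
The plan is to deduce Lemma \ref{lm:cons} directly from the failure of the defining condition of $\mathcal{G}(J,n,\varepsilon,\delta;p)$; in fact the lemma is little more than a bookkeeping reformulation of the argument already set up in this subsection, so the proof amounts to assembling the pieces in the right order. First I would unwind the hypothesis: if $f\notin\mathcal{G}(J,n,\varepsilon,\delta;p)$ then, by Theorem \ref{thm:multiplicity}, there are arbitrarily large $t>0$, a point $z_0=(x_0,y_0)\in X_f$ with $x_0\notin\mathrm{Per}_\delta(\tau,n)$, and $\xi_0\in[-\theta_0,\theta_0]$ such that the estimate (\ref{eq:mult2}) fails for \emph{every} $\excep\subset\tau^{-n}(x_0)$ with $\#\excep\le pq$; equivalently, (\ref{eq:mult3}) holds. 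I would set $m:=\lfloor at/(n\log\ell)\rfloor$, which grows without bound as $t\to\infty$, giving ingredient (a).

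The main work is the passage from a sum over $p$-tuples $\bw\in\back(z_0,t;J;f)^p$ to a sum over $p$-tuples $\bx\in(\tau^{-n}(x_0))^p$. Each $w\in\back(z_0,t;J;f)$ crosses the bottom boundary at least $mn$ times; grouping such $w$ by the point $T_f^{s_{mn}(z_0,w;t)}(w)$ they reach after $mn$ crossings and invoking the cardinality bound (\ref{eq:card_bx}) together with the comparison $W^r(\bw,t;f)(\xi_0,2)^{-1}\lesssim\langle\ell^{mn}|\xi_0-\hat{S}(\tilde{\bx},mn;f)|\rangle^{-r}$ valid on the relevant fibre, I would convert (\ref{eq:mult3}) into (\ref{eq:sum_bx}), absorbing the controlled factor $(\ell^{n+1}e^{(b-a)t})^p$ into the $\varepsilon t$–margin; this produces a lower bound for $\sum_{\bx\in(\tau^{-n}(x_0)\setminus\excep)^p}\Delta^*(\bx)$ that is uniform over admissible $\excep$.

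Then I would run the greedy selection: list the $\ell^{pn}$ tuples $\by_k$ of $(\tau^{-n}(x_0))^p$ in order of non-increasing $\Delta^*(\by_k)$, let $Y_k$ be the set of coordinates occurring in $\by_1,\dots,\by_k$, and take $k_*$ maximal with $\#Y_{k_*}\le pq$. Substituting $\excep=Y_{k_*}$ in (\ref{eq:sum_bx}) and bounding each surviving summand by $\Delta^*(\by_{k_*})$ yields the uniform lower bound (\ref{eq:delta_lower}) on $\Delta^*(\by_k)$ for $k\le k_*$; using $mn\log\ell\le at$ and discarding the fixed factor $\ell^{-np}$ (at the cost of infinitesimally shrinking $\varepsilon$, as usual) rewrites it in the form stated in the lemma. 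Maximality of $k_*$ forces $\#Y_{k_*}>p(q-1)$, so Lemma \ref{lm:findingS} extracts an independent ordered array $(\bx_k)_{k=1}^q$ from $\by_1,\dots,\by_{k_*}$ — ingredient (c) — each $\bx_k$ inheriting the lower bound. Finally, since $x_0$ and $\xi_0$ enter only through $\langle\ell^{mn}|\xi_0-\cdot|\rangle^{-r}$, I would shift $(x_0,\xi_0)$ to the nearest point of the grid $P(m)$ (with $\#P(m)\le C_0\ell^{2(1+\varepsilon)mn}$ and $\ell^{-(1+\varepsilon)mn}$–dense in $S^1\times[-\theta_0,\theta_0]$), re-choosing the preimages $\bx_k$, which changes each summand only by a bounded factor — ingredient (b).

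The step I expect to be the main obstacle is the translation from the $\bw$–sum to the $\bx$–sum: it requires pinning down $m$ so that the $mn$-th boundary crossing is always defined, controlling the multiplicity of $\bw\mapsto\tilde{\bx}$, and keeping every polynomial-in-$n$ and controlled-exponential factor inside the $\varepsilon t$ window while still landing on the clean exponent $\max\{p\,h(f)-a,0\}$ rather than something larger. Everything else is combinatorial bookkeeping; in particular the independence of the extracted array is a property of the points $\bx_k\in\tau^{-n}(x_0)$ alone and survives the shift of the target point $x_0$.
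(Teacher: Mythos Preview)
Your proposal is correct and follows essentially the same approach as the paper: unwind the failure of the defining condition to obtain (\ref{eq:mult3}), set $m=\lfloor at/(n\log\ell)\rfloor$, pass from the $\bw$-sum to the $\bx$-sum via (\ref{eq:card_bx}) and the pointwise comparison of $W^r$ with the bracket, run the greedy selection with $\excep=Y_{k_*}$ to obtain (\ref{eq:delta_lower}), extract an independent array via Lemma \ref{lm:findingS}, and finally shift $(x_0,\xi_0)$ onto the grid $P(m)$. The only cosmetic point is that the inequality $\#Y_{k_*}>p(q-1)$ is not immediate from maximality alone but from $\#Y_{k_*+1}>pq$ together with $\#Y_{k_*+1}\le\#Y_{k_*}+p$; otherwise your outline matches the paper's argument step for step.
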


\subsection{The end of the proof}
Let $\varepsilon,\delta>0$ and $J=[a,b]$ are those given in the statement of Theorem \ref{thm:multiplicity}. We take functions $g_k:S^1\to \real$ for $1\le k\le K$ as in Lemma \ref{lm:family} and consider the families (\ref{eq:family}) for all $f\in \Func(y_{\min},y_{\max},\kappa_{0})$. (The choice of $\sigma>0$ will be given below.) 
For each of such families, we 
prove that  $f_{\bs}$ does \emph{not} belong to $\mathcal{G}(J,n,\varepsilon,\delta;p)$
only when the parameter $\bs\in R(\sigma)$ belongs to a subset with zero Lebesgue measure. This implies that the subset $\mathcal{G}(J,n,\varepsilon,\delta;p)$ is a prevalent subset. \begin{rem}For the last statement, recall Remark \ref{rem:prevalence}. The Lebesgue measure on the finite dimensional subspace of $C^\infty(S^1)$ spanned by $g_k$, $1\le k\le K$, is the transverse measure to $\Func(y_{\min},y_{\max},\kappa_{0})\setminus  \mathcal{G}(J,n,\varepsilon,\delta;p)$.
\end{rem}

Let $\eta>0$ be a small real number that we will specify later. (At least, we suppose that $\eta$ is much smaller than $\varepsilon$.) 
Then let $\sigma>0$ be  so small that 
\[
e^{-\eta}\cdot f(x)\le f_{\bs}(x)\le e^{\eta}\cdot f(x)
\qquad\mbox{and}\qquad
|h(f_{\bs})-h(f)|<\eta
\]
for $\bs\in R(\sigma)$ and $f\in \Func(y_{\min},y_{\max},\kappa_{0})$. For $x_0\in S^1$ and $m\ge 1$, let $\back(x_0,mn)$ be the set of points $x$ in $\tau^{-mn}(x_0)$ satisfying
\begin{equation}\label{eq:fmn_bound}
f^{(mn)}(x) \le  e^{\eta}\cdot \frac{ mn \log \ell}{a}.
\end{equation}
Note that, when $m$ is sufficiently large,  we have  
\[
\#\back(x_0,mn)\le  
\exp\left(h(f) \cdot e^{2\eta}\cdot \frac{mn\log \ell}{a}\right)
\]

%Then $x\in\tau^{-mn}(x_0)$ satisfies
%\[
%f^{(mn)}_{\bs}(x) \le  \frac{mn \log \ell}{a}\quad \mbox{for some $\bs\in R(\sigma)$}
%\] 
%only if $x\in \back(x_0,mn)$. 

For an integer $m\ge 1$, a point $(x_0,\xi_0)\in P(m)$, an array $(\bx_j)_{j=1}^q$ of $q$ elements in $(\tau^{-n}(x_0))^p$ and
an array $(\tilde{\bx}_j)_{k=1}^q$ of $q$ elements in $(\tau^{-mn}(x_0))^p$ such that 
\begin{equation}\label{eq:Bx}
\tau^{(m-1)n}(\tilde{\bx}_j(i))=\bx_j(i)\quad \mbox{for $1\le i\le p$ and $1\le j\le q$,}
\end{equation}
we define the function 
\[
\Xi_m((x_0,\xi_0);(\bx_j)_{j=1}^q; (\tilde{\bx}_j)_{j=1}^q):R(\sigma)\to \real
\]
on the parameter space $R(\sigma)$ by 
\[
\Xi_m((x_0,\xi_0);(\bx_j)_{j=1}^q; (\tilde{\bx}_j)_{j=1}^q)(\bs)=
\prod_{j=1}^q \left\langle \ell^{mn}|\xi_0-\hat{S}(\tilde{\bx}_j,mn,f_{\bs})|\right\rangle^{-r}.
\] 
If the array $(\bx_j)_{j=1}^q$ is independent, we have from the choice of the functions $g_i$ that
\[
\int_{R(\sigma)} \Xi_m((x_0,\xi_0);(\bx_j)_{j=1}^q; (\tilde{\bx}_j)_{j=1}^q)(\bs) d\bs \le C_0 \ell^{-mnq}
\]
for a constant $C_0$ depending only on $r$.
Therefore we have
\begin{multline}\label{eq:int}
\sum{}^{**}
\int_{R(\sigma)} \Xi_m((x_0,\xi_0);(\bx_j)_{j=1}^q; (\tilde{\bx}_j)_{j=1}^q)(\bs) d\bs \\
\le C_0 \ell^{-mnq}\cdot \ell^{2(1+\varepsilon)mn}\cdot  \exp\left(pq\cdot h(f) \cdot e^{2\eta}\cdot \frac{mn\log \ell}{a}\right)
\end{multline}
for sufficiently large  $m$, where the sum $\sum^{**}$ is taken over combinations of 
\begin{itemize}
\item a point $(x_0,\xi_0)\in P(m)$, 
\item an independent array $(\bx_j)_{j=1}^q$ of $q$ elements in $\tau^{-n}(x_0))^p$ and
\item  an array $(\tilde{\bx}_j)_{j=1}^q$ in 
$(\back(x_0,mn))^{p}\subset \tau^{-mn}(x_0))^p$ satisfying (\ref{eq:Bx}). 
\end{itemize}

Let $\mathcal{X}\subset R(\sigma)$ be the set of parameters $\bs\in R(\sigma)$ such that $f_{\bs}$ belongs to $\Func(y_{\min},y_{\max},\kappa_{0})$ and  does not satisfy the condition in the definition of $\mathcal{G}(J,n,\varepsilon,\delta;p)$. 
From the conclusion in the last subsection given  in Lemma \ref{lm:cons}, we see that 
\[
\mathcal{X}\subset \limsup_{m\to \infty} \mathcal{X}_m
\]
where 
$\mathcal{X}_m$ is the set of parameters $\bs\in R(\sigma)$ such that 
\[
\sum{}^{**}\Xi_m((x_0,\xi_0);(\bx_j)_{j=1}^q; (\tilde{\bx}_j)_{j=1}^q)(\bs)
\ge 
\exp\left(q\left(p\cdot e^{-\eta}\cdot h(f)-a+\frac{\varepsilon}{2}\right)\frac{mn\log \ell}{a}\right)
\]
Comparing this with (\ref{eq:int}), we see that the Lebesgue measure of $\mathcal{X}_m$ is bounded by 
\[
C_0 \exp
\left(\left(\frac{2a(1+\varepsilon)}{q}+(e^{2\eta}-e^{-\eta}) p\cdot h(f)- \frac{\varepsilon}{2}\right)
\cdot \frac{qmn\log \ell}{a} \right).
\]
From the choice of $q$ in (\ref{eq:q}), we can take small $\eta>0$ (and also $\sigma>0$ accordingly) so that  this bound
decreases exponentially with respect to $m$. 
Hence Lebesgue measure of $\mathcal{X}$ is zero by Borel-Cantelli lemma.

\bibliographystyle{abbrv}
\bibliography{bib}

\begin{thebibliography}{10}

\bibitem{BT08}
V.~Baladi and M.~Tsujii.
\newblock Dynamical determinants and spectrum for hyperbolic diffeomorphisms.
\newblock In {\em Geometric and probabilistic structures in dynamics}, volume
  469 of {\em Contemp. Math.}, pages 29--68. Amer. Math. Soc., Providence, RI,
  2008.

\bibitem{Buser}
P.~Buser.
\newblock {\em Geometry and spectra of compact {R}iemann surfaces}, volume 106
  of {\em Progress in Mathematics}.
\newblock Birkh\"auser Boston, Inc., Boston, MA, 1992.

\bibitem{GLP}
P.~Giulietti, C.~Liverani, and M.~Pollicott.
\newblock Anosov flows and dynamical zeta functions.
\newblock {\em Ann. of Math. (2)}, 178(2):687--773, 2013.

\bibitem{GGK}
I.~Gohberg, S.~Goldberg, and N.~Krupnik.
\newblock {\em Traces and determinants of linear operators}, volume 116 of {\em
  Operator Theory: Advances and Applications}.
\newblock Birkh\"auser Verlag, Basel, 2000.

\bibitem{MR1161274}
B.~R. Hunt, T.~Sauer, and J.~A. Yorke.
\newblock Prevalence: a translation-invariant ``almost every'' on
  infinite-dimensional spaces.
\newblock {\em Bull. Amer. Math. Soc. (N.S.)}, 27(2):217--238, 1992.

\bibitem{MR1191479}
B.~R. Hunt, T.~Sauer, and J.~A. Yorke.
\newblock Prevalence. {A}n addendum to: ``{P}revalence: a translation-invariant
  `almost every' on infinite-dimensional spaces'' [{B}ull.\ {A}mer.\ {M}ath.\
  {S}oc.\ ({N}.{S}.)\ {\bf 27} (1992), no.\ 2, 217--238; {MR}1161274
  (93k:28018)].
\newblock {\em Bull. Amer. Math. Soc. (N.S.)}, 28(2):306--307, 1993.

\bibitem{Lu}
Z.-h. Lu.
\newblock Topological pressure of continuous flows without fixed points.
\newblock {\em J. Math. Anal. Appl.}, 311(2):703--714, 2005.

\bibitem{Mane}
R.~Ma{\~n}{\'e}.
\newblock {\em Ergodic theory and differentiable dynamics}, volume~8 of {\em
  Ergebnisse der Mathematik und ihrer Grenzgebiete (3) [Results in Mathematics
  and Related Areas (3)]}.
\newblock Springer-Verlag, Berlin, 1987.
\newblock Translated from the Portuguese by Silvio Levy.

\bibitem{Yorke}
W.~Ott and J.~A. Yorke.
\newblock Prevalence.
\newblock {\em Bull. Amer. Math. Soc. (N.S.)}, 42(3):263--290 (electronic),
  2005.

\bibitem{PP}
W.~Parry and M.~Pollicott.
\newblock An analogue of the prime number theorem for closed orbits of {A}xiom
  {A} flows.
\newblock {\em Ann. of Math. (2)}, 118(3):573--591, 1983.

\bibitem{Pollicott99}
M.~Pollicott.
\newblock On the mixing of {A}xiom {A} attracting flows and a conjecture of
  {R}uelle.
\newblock {\em Ergodic Theory Dynam. Systems}, 19(2):535--548, 1999.

\bibitem{PS}
M.~Pollicott and R.~Sharp.
\newblock Exponential error terms for growth functions on negatively curved
  surfaces.
\newblock {\em Amer. J. Math.}, 120(5):1019--1042, 1998.

\bibitem{Ruelle86}
D.~Ruelle.
\newblock Locating resonances for {A}xiom {A} dynamical systems.
\newblock {\em J. Statist. Phys.}, 44(3-4):281--292, 1986.

\bibitem{Stoyanov}
L.~Stoyanov.
\newblock Ruelle transfer operators for contact anosov flows and decay of
  correlations.
\newblock 2013.

\bibitem{Triebel}
H.~Triebel.
\newblock {\em Theory of function spaces}, volume~78 of {\em Monographs in
  Mathematics}.
\newblock Birkh\"auser Verlag, Basel, 1983.

\bibitem{MR1167374}
M.~Tsujii.
\newblock A measure on the space of smooth mappings and dynamical system
  theory.
\newblock {\em J. Math. Soc. Japan}, 44(3):415--425, 1992.

\bibitem{Tsujii08}
M.~Tsujii.
\newblock Decay of correlations in suspension semi-flows of angle-multiplying
  maps.
\newblock {\em Ergodic Theory Dynam. Systems}, 28(1):291--317, 2008.

\bibitem{Tsujii10}
M.~Tsujii.
\newblock Quasi-compactness of transfer operators for contact {A}nosov flows.
\newblock {\em Nonlinearity}, 23(7):1495--1545, 2010.

\bibitem{Tsujii12}
M.~Tsujii.
\newblock Contact {A}nosov flows and the {F}ourier-{B}ros-{I}agolnitzer
  transform.
\newblock {\em Ergodic Theory Dynam. Systems}, 32(6):2083--2118, 2012.

\end{thebibliography}

\end{document}